\documentclass[11pt]{article}

\usepackage[margin=1in]{geometry}
\usepackage{soul}

\usepackage[utf8]{inputenc}
\usepackage{dsfont}

\usepackage{amsmath}
\usepackage{amsfonts}
\usepackage{amssymb}
\usepackage{amsthm}
\usepackage{xcolor}
\usepackage{ulem}
\usepackage{graphicx}

\DeclareMathOperator{\Sq}{\mathrm{Sq}}

\newtheorem{thm}{Theorem}

\newtheorem{rem}{Remark}

\newtheorem{lem}{Lemma}
\newtheorem{defn}{Definition}

\newtheorem{fact}{Fact}

\newtheorem{question}{Question}

\newtheorem*{thmalt}{Theorem \ref{refin}'}

\DeclareMathOperator{\Pow}{\mathrm{Pow}}

\DeclareMathOperator{\x}{\mathbf{x}}

\newcommand{\norm}[1]{\left\lvert\left\lvert #1 \right\rvert\right\rvert}
\newcommand{\abs}[1]{\left\lvert #1 \right\rvert}

\title{Equidistribution of orbits at polynomial times in rigid dynamical systems}

\author{Kosma Kasprzak}
\date{\empty}

\begin{document}

\maketitle
\begin{abstract}
We study distribution of orbits sampled at polynomial times for uniquely ergodic topological dynamical systems $(X, T)$. 
First, we prove that if there exists an increasing sequence $(q_n)$ for which the rigidity condition
\[
\max_{t<q_{n+1}^{4/5}}\sup_{x\in X}d(x, T^{tq_n}x)=o(1)
\]
is satisfied, then all square orbits $(T^{n^2}x)$ are equidistributed (with respect to the only invariant measure). We show that this rigidity condition might hold for weakly mixing systems, and so as a consequence we obtain first examples of weakly mixing systems where such an equidistribution holds. We also show that for integers $C>1$ a much weaker rigidity condition 
\[
\max_{t<q_n^{C-1}}\sup\limits_{x\in X}d\left(x, T^{tq_n}x\right)=o(1)
\]
implies density of all orbits $(T^{n^C}x)$ in totally uniquely ergodic systems, as long as the sequence $(\omega(q_n))$ is bounded.

\end{abstract}

\section{Introduction}

\subsection{Ergodic averages along sequences}\label{intr}

In this paper we consider topological dynamical systems, that is, pairs $(X, T)$ where $X$ is a compact metric space and $T:X\to X$ a homeomorphism. We study the averages
\begin{equation}\label{ergavg2}
\frac{1}{N}\sum\limits_{n<N}f(T^{a_n}x)
\end{equation}
for functions $f\in C(X)$ and sequences $(a_n)$ of natural numbers, specifically sequences of the form $a_n=P(n)$ for polynomials $P$. We are particularly interested in convergence of such averages at every point $x\in X$. Recall that convergence of these averages in the classical case, i.e. when $a_n=n$, follows from unique ergodicity of $T$.

One can also study the case when $(X, T)$ comes equipped with an invariant measure, and ask about convergence of (\ref{ergavg2}) at almost every point (with respect to this measure). For the classical ergodic averages convergence of \eqref{ergavg2} follows from the celebrated Birkhoff ergodic theorem.
This theorem has been extended by Bourgain and Wierdl to averages along sequences $(P(n))$ for polynomials $P$, and along the sequence $(p_n)$ of prime numbers. The authors use fine harmonic analysis, in particular oscillation inequalities, to show that for the mentioned sequences averages $(\ref{ergavg2})$ converge at almost every $x\in X$ as long as $f\in L^p(X)$ for $p>1$. However, in general the limit is not determined.

Notice, that in uniquely ergodic systems convergence of (\ref{ergavg2}) everywhere to the integral of $f$ (with respect to the unique measure) is simply the statement, that each $(a_n)$-orbit $(T^{a_n}x)$ equidistributes in $X$ (with respect to the unique invariant measure).
Convergence for every point $x\in X$ is a much more delicate problem than almost everywhere convergence. Indeed, for $(a_n)$ strictly increasing and with density zero, Pavlov provided in (\cite{pavlov}, Theorems 1.2 and 1.3) general symbolic examples of well-behaved systems where the averages (\ref{ergavg2}) diverge for some $x\in X$. Another general class of counterexamples, in the form of skew product systems over circle rotations, was given by Kanigowski, Lemańczyk and Radziwiłł in (\cite{PNT}, Section 11). In particular, these results encompass primes and polynomials, and it seems that to establish convergence everywhere one needs quantitative information about the system, rather than just ergodic properties.

There are other sequences, e.g. $a_n=\Omega(n)$, where qualitative information is enough to ensure equidistribution of $(T^{\Omega(n)}x)$, as shown by Bergelson and Richter in \cite{bergelson}. Another instance where qualitative information might be sufficient is the problem of M{\"o}bius orthogonality (Sarnak's conjecture), where it is often enough to study joinings of different powers of the system, see e.g. the survey \cite{sarnak} by Ferenczi, Kułaga-Przymus and Lemańczyk.

For primes, everywhere convergence has been obtained in several classes using quantitative information; see the Introduction to \cite{PNT} and referencecs therein.

Let us also mention a recent result by Forni, Kanigowski and Radziwiłł, who, using quantitative bounds on so called type II sums, show in \cite{FKR} that (\ref{ergavg2}) converge along semiprimes for all $x\in X$ for the time one map of the horocycle flow in certain quotients of the hyperbolic plane. In fact, they show that each semiprime orbit is either periodic or equidistributed in $X$.

In the case of polynomial sequences much less is known. 
A general method developed by Venkatesh in \cite{venkatesh} allows results for $(a_n)$ of the type $n^{1+\delta}$ with small $\delta>0$, using quantitative ergodicity and quantitative mixing of the system (see also \cite{FFT} by Flaminio, Forni and Tanis). These methods are however not applicable for $\delta>1/2$, so in particular do not give results for polynomials.

The first everywhere convergence result for $a_n=P(n)$ for polynomials $P$ is convergence of (\ref{ergavg2}) for circle rotations, obtained by Weyl. In \cite{leibman} Leibman generalized this to all nilsystems, and later Green and Tao obtained quantitative bounds in this direction in \cite{green-tao}. The methods used in these results rely essentially on variants of the van der Corput inequality, using the existence of generalized eigenfunctions to reduce the degree of the considered polynomial to the linear case. 

In particular, this approach cannot be used in weakly mixing systems, where there are no generalized eigenfunctions.  Because of this, there are no known weakly mixing systems where polynomial orbits equidistribute. Additionally, there are no known systems where equidistribution holds for all $(n^2)$-orbits, but not all $(n^3)$-orbits.

The goal of this paper is to introduce new techniques of studying equidistribution of orbits at polynomials times, applicable in particular to weakly mixing systems. For this purpose, we impose quantitative rigidity conditions on the system $(X, T)$, and use them to reduce the question of equidistribution to a number-theoretic problem concerning the distribution of $(P(n))$ in residue classes modulo integers. Specifically, we need to examine how many terms of this sequence, when reduced modulo a given integer, fall into a given arithmetic progression.
For this purpose, we need bounds for exponential sums in short intervals with polynomial phases.

\subsection{Main results}
Our main result is 
\begin{thm}\label{strrigid}
Assume that $(X, T)$ is a uniquely ergodic dynamical system and $(q_n)$ is an increasing sequence of positive integers satisfying $\gcd(q_n, q_{n+1})=1$ such that
\begin{equation}\label{rigid}
\max_{t<q_{n+1}^{4/5}}\sup_{x\in X}d(x, T^{tq_n}x)=o(1).
\end{equation}
Then for any function $f\in C(X)$ we have
\[
\lim_{N\to\infty}\frac{1}{N}\sum\limits_{i<N}f(T^{i^2}x)=\int_X f d\mu
\]
uniformly in $x\in X$, where $\mu$ is the unique ergodic measure on $X$.
\end{thm}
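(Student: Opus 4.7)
The plan is to use the rigidity hypothesis (\ref{rigid}) to replace each $T^{i^2}x$ by $T^{i^2 \bmod q_n}x$, and then to analyze the resulting residue-class average via a discrete Fourier expansion on $\mathbf{Z}/q_n\mathbf{Z}$ together with Weyl-type estimates for quadratic exponential sums. Concretely, given a large $N$, I would choose $n = n(N)$ as the smallest index with $N^2 \leq q_n q_{n+1}^{4/5}$; since $(q_n)$ is unbounded, $n(N) \to \infty$ as $N \to \infty$. For each $i < N$, writing $i^2 = t_i q_n + r_i$ with $r_i = i^2 \bmod q_n$, the quotient $t_i \leq i^2/q_n < q_{n+1}^{4/5}$ lies in the range governed by (\ref{rigid}), so together with the uniform continuity of $f$ one obtains $f(T^{i^2}x) = f(T^{r_i}x) + o(1)$ uniformly in $i$ and $x$. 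It therefore suffices to show that
\[
\frac{1}{N}\sum_{i<N} f(T^{i^2 \bmod q_n}x) \longrightarrow \int_X f\, d\mu
\]
uniformly in $x$ as $N \to \infty$.

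Viewing $g_x(r) := f(T^r x)$ for $r \in \{0,1,\dots,q_n-1\}$ as a function on $\mathbf{Z}/q_n\mathbf{Z}$ and expanding in additive characters, with $\hat g_x(k) = \frac{1}{q_n}\sum_r g_x(r) e(-kr/q_n)$, one gets
\[
\frac{1}{N}\sum_{i<N} f(T^{i^2 \bmod q_n}x) = \hat g_x(0) + \sum_{k=1}^{q_n - 1} \hat g_x(k)\, S_k(N), \qquad S_k(N) := \frac{1}{N}\sum_{i<N} e(ki^2/q_n).
\]
The zero-frequency term is $\hat g_x(0) = \frac{1}{q_n}\sum_{r<q_n} f(T^r x)$, which tends uniformly in $x$ to $\int_X f\,d\mu$ by unique ergodicity (as $q_n \to \infty$). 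The theorem is thus equivalent to the uniform bound $\sup_{x\in X}\bigl|\sum_{k\neq 0}\hat g_x(k)\, S_k(N)\bigr| = o(1)$.

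Controlling this error is the main obstacle, and is where the precise exponent $4/5$ in (\ref{rigid}) becomes relevant. I would partition the frequencies $k$ by $d := \gcd(k, q_n)$, writing $k = dk'$ and $q_n = dq'$ with $\gcd(k',q') = 1$, so that $S_k(N) = \frac{1}{N}\sum_{i<N} e(k'i^2/q')$ becomes a short quadratic exponential sum whose effective denominator is $q' = q_n/d$. Classical Weyl-type bounds, together with Gauss-sum completion arguments valid on short intervals (the full Gauss sum $\sum_{n\bmod q'} e(k'n^2/q')$ has modulus $\sqrt{q'}$ when $\gcd(k',q')=1$), provide pointwise estimates on $|S_k(N)|$ in each class. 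The delicate point is then to sum these against $\hat g_x(k)$: one must balance the regime of $k$ with large $q'$ (where $|S_k(N)|$ is small but many such $k$ contribute) against the regime of small $q'$ (where $|S_k(N)|$ may be of order $1$ but the frequencies are few). The role of the coprimality $\gcd(q_n,q_{n+1}) = 1$ presumably enters here, via a CRT-based decomposition that separates the contributions of distinct prime power divisors of $q_n$. I expect the bulk of the technical work of the proof to concentrate in this final estimate, with the $4/5$ exponent being exactly what is needed to match the strength of the available short-interval quadratic Weyl bounds against the error introduced by the rigidity reduction.
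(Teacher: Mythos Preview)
Your reduction of $T^{i^2}x$ to $T^{i^2\bmod q_n}x$ is correct, and the Fourier expansion is a natural next move, but the treatment of the nonzero frequencies has a genuine gap. With your choice of $n(N)$ you only know $N^2>q_{n-1}q_n^{4/5}$, so $N$ can be as small as roughly $q_n^{2/5}$ when $q_{n-1}$ is tiny compared with $q_n$. In that regime the short quadratic sums $S_k(N)$ are \emph{not} small: a direct $\ell^2$ computation gives
\[
\sum_{k=1}^{q_n-1}|S_k(N)|^2=\frac{q_n}{N^2}\,\bigl|\{(i,j):i,j<N,\ q_n\mid i^2-j^2\}\bigr|-1,
\]
which for $N\ll q_n^{1/2}$ (and, say, $q_n$ prime) is of order $q_n/N\to\infty$. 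Since unique ergodicity gives no pointwise decay of $\hat g_x(k)$ for $k\neq 0$---only the Parseval bound $\sum_k|\hat g_x(k)|^2\leqslant\|f\|_\infty^2$---neither Cauchy--Schwarz nor any $\ell^1$--$\ell^\infty$ splitting can close the estimate. Your guess that the coprimality $\gcd(q_n,q_{n+1})=1$ enters via a CRT decomposition of $q_n$ is also off: $q_n$ may itself be prime.

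The missing idea is to invoke the rigidity hypothesis a \emph{second} time, at the previous scale $q_{n-1}$. The paper partitions $[0,N)$ into intervals $I$ of length $\approx q_n^{4/5}$ (rather than fixing a single $n$ for all of $[0,N)$), reduces $i^2$ modulo $q_n$ on each $I$ as you do, and then---crucially---averages over shifts by $tq_{n-1}$ for $t<q_n^{4/5}$, which rigidity at scale $q_{n-1}$ permits. This converts the problem into counting, for each residue $z\bmod q_n$, the pairs $(i,t)$ with $i\in I$, $t<q_n^{4/5}$ and $i^2\equiv z-tq_{n-1}\pmod{q_n}$; the coprimality $\gcd(q_{n-1},q_n)=1$ is exactly what makes this count asymptotically independent of $z$ (this is Lemma~\ref{lem}, and the exponent $4/5$ is precisely the threshold there for $C=2$), after which unique ergodicity finishes. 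In your Fourier language, the $q_{n-1}$-rigidity is what would force $\hat g_x(k)$ to be negligible away from $k=0$, and that input is absent from your outline.
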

In Section \ref{examples} we produce examples of skew product systems and special flows (both over irrational rotations) satisfying these assumptions. In particular, we give examples of weakly mixing systems where all square orbits equidistribute.

Interestingly, our method does not seem to generalize to sequences $(P(n))$ for polynomials $P$ of degree higher than 3. This is because the condition (\ref{rigid}) is quite strong, since we require $T^{tq_n}$ to be close to the identity for $t$ of size depending on $q_{n+1}$. While we could produce analogous results to Theorem \ref{strrigid} for sequences $(P(n))$, the required rigidity condition would already be too strong to be satisfied by any nontrivial system. We discuss this further in Section \ref{other}. 

For polynomials of degree 3, our methods barely apply under much more restrictive conditions on the dynamical system, but it seems they are still compatible with weakly mixing examples in the form of special flows. We plan to examine this case in future work.\\

We are also interested in more natural weaker rigidity conditions, where we require $T^{tq_n}$ to be close to identity only for $t$ depending on $q_n$ rather than $q_{n+1}$. In Section \ref{weakrig} we prove the following result:
\begin{thm}\label{refin}
    Let $(X, T)$ be a totally uniquely ergodic measure preserving system. Let $C\geqslant 2$ be a natural number. Assume, that for some $k\in\mathbb{N}$ and an increasing sequence $(q_n)$ of natural numbers the following conditions are satisfied:
\begin{enumerate}
    \item for any $n$ we have $\omega(q_n)\leqslant k$,
    \item we have
    \begin{equation}\label{rigweak}
    \max_{t<q_n^{C-1}}\sup\limits_{x\in X}d\left(x, T^{tq_n}x\right)=o(1).
    \end{equation}
    \end{enumerate}
    Then for any $x\in X$ and any function $f\in C(X)$ we have
    \[
    \lim_{n\to\infty}\frac{1}{q_n}\sum\limits_{i< q_n}f(T^{i^C}x)=\int\limits_{X}f \mathrm{d}\mu
    \]
    uniformly in $x\in X$.
\end{thm}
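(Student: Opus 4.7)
The plan has three stages: a rigidity reduction from $i^C$ to $i^C\bmod q_n$; a counting reformulation via the Chinese Remainder Theorem; and a Fourier analysis coupling Weyl-type exponential sum bounds with total unique ergodicity. For the first step, for $i<q_n$ write $i^C = a_i q_n + b_i$ with $b_i = i^C\bmod q_n$ and $a_i = \lfloor i^C/q_n\rfloor<q_n^{C-1}$. The hypothesis (\ref{rigweak}) yields $d(T^{a_i q_n}y,y)=o(1)$ uniformly in $y\in X$, so by uniform continuity of $f$,
\[
\frac{1}{q_n}\sum_{i<q_n}f(T^{i^C}x) = \frac{1}{q_n}\sum_{i<q_n}f(T^{i^C\bmod q_n}x) + o(1)
\]
uniformly in $x$. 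Setting $r_j^{(n)} = \#\{i<q_n: i^C\equiv j\pmod{q_n}\}$, the right-hand side equals $\frac{1}{q_n}\sum_j r_j^{(n)}f(T^j x)$. Since $\frac{1}{q_n}\sum_j f(T^j x)\to\int f\,d\mu$ by unique ergodicity of $T$, the goal reduces to proving
\[
\frac{1}{q_n}\sum_{j<q_n}\left(r_j^{(n)}-1\right)f(T^j x)\to 0\quad\text{uniformly in }x.
\]

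Writing $q_n = \prod_{\ell=1}^m p_\ell^{e_\ell}$ with $m=\omega(q_n)\leq k$, the Chinese Remainder Theorem gives $r_j^{(n)} = \prod_\ell r_j^{(p_\ell^{e_\ell})}$ with each local factor at most $C$, hence $r_j^{(n)}\leq C^k$. Expanding via the discrete Fourier transform on $\Z/q_n\Z$, $r_j^{(n)}-1 = \frac{1}{q_n}\sum_{h\neq 0}S_h\,e(-jh/q_n)$ where $S_h = \sum_{i<q_n}e(hi^C/q_n)$ (with $e(x)=e^{2\pi i x}$), so
\[
\frac{1}{q_n}\sum_j\left(r_j^{(n)}-1\right)f(T^j x) = \frac{1}{q_n^2}\sum_{h\neq 0}S_h\,\widehat{F}_n(-h),
\]
where $\widehat{F}_n(h)=\sum_{j<q_n}f(T^j x)\,e(-jh/q_n)$. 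Split the nonzero $h$'s according to the denominator $q'(h)=q_n/\gcd(h,q_n)$ of $h/q_n$ in lowest terms; write $h=h''\,q_n/q'(h)$ with $\gcd(h'',q'(h))=1$.

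For $h$ of small denominator $q'(h)\leq M$, the identity
\[
\frac{\widehat{F}_n(h)}{q_n} = \frac{1}{q'(h)}\sum_{b=0}^{q'(h)-1}e(-bh''/q'(h))\cdot\frac{1}{q_n/q'(h)}\sum_{m<q_n/q'(h)}f(T^{mq'(h)+b}x)
\]
expresses $\widehat{F}_n(h)/q_n$ as a character-weighted average of Birkhoff sums of $T^{q'(h)}$, each converging uniformly in $x$ and $b$ to $\int f\,d\mu$ by total unique ergodicity of $T^{q'(h)}$; the character cancellation $\sum_b e(-bh''/q'(h))=0$ then forces $\widehat{F}_n(h)/q_n\to 0$. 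For $h$ of large denominator $q'(h)>M$, CRT together with classical Weyl bounds for exponential sums with polynomial phases modulo prime powers yields $|S_h|\leq q_n\cdot q'(h)^{-\delta_C}$ for some $\delta_C=\delta_C(C,k)>0$; Cauchy-Schwarz combined with the Parseval identity $\sum_h|\widehat{F}_n(h)|^2\leq q_n^2\|f\|_\infty^2$ controls the contribution of this range.

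The crux is balancing these two regimes: total unique ergodicity yields convergence for each fixed $q'$ but without a uniform rate in $q'$, while the Weyl saving $q'(h)^{-\delta_C}$ only beats the trivial bound once the denominator is large. The plan is to choose a slowly growing threshold $M=M(n)\to\infty$ so that the few divisors $q'\leq M$ of $q_n$ (bounded in number by $\tau(q_n)\leq(\log q_n)^k$ via $\omega(q_n)\leq k$) keep the uniform ergodic input from total unique ergodicity manageable, while $M$ is still large enough to make the Weyl contribution from $q'(h)>M$ negligible. The bound $\omega(q_n)\leq k$ plays a dual role here, both bounding $r_j^{(n)}$ pointwise and controlling the divisor structure; propagating these bounds through both regimes is expected to be the most technical step of the proof.
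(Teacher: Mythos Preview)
Your rigidity reduction to $i^C\bmod q_n$ is correct and matches the paper's first step, and your treatment of the small-denominator frequencies via total unique ergodicity is also sound (provided $M$ is held \emph{fixed}, not sent to infinity). The gap is in the large-denominator estimate. Cauchy--Schwarz plus Parseval gives at best
\[
\frac{1}{q_n^2}\Bigl(\sum_{q'(h)>M}|S_h|^2\Bigr)^{1/2}\Bigl(\sum_h|\widehat{F}_n(h)|^2\Bigr)^{1/2}\leqslant\frac{\|f\|_\infty}{q_n}\Bigl(\sum_{q'(h)>M}|S_h|^2\Bigr)^{1/2},
\]
but already for $q_n$ prime and $C=2$ one has $|S_h|=\sqrt{q_n}$ for every $h\neq0$, so $\sum_{h\neq0}|S_h|^2=q_n(q_n-1)$ and the bound is $\|f\|_\infty\sqrt{1-1/q_n}=O(1)$, not $o(1)$; for $C\geqslant3$ the Weyl exponent $\delta_C\leqslant 2^{1-C}<1/2$ makes it diverge. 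Equivalently, $\sum_j(r_j^{(n)}-1)^2$ is of exact order $q_n$, so no $L^2$ pairing against $j\mapsto f(T^jx)$ can win. (Incidentally, the pointwise bound $r_j^{(n)}\leqslant C^k$ is also false: for $q_n=p^e$ one has $r_0^{(n)}=p^{e-\lceil e/C\rceil}$.)

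The paper sidesteps this by expanding $\Pow_{q_n}$ in \emph{multiplicative} rather than additive characters. Since $t\mapsto t^C$ is a homomorphism of $(\mathbb{Z}/p^e\mathbb{Z})^*$, the restriction $\Pow_{p^e}(\cdot,1)$ is a sum of at most $2C$ Dirichlet characters, and via CRT and a truncation of small prime-power divisors one approximates $\Pow_{q_n}$ by a combination of only $O_{C,k}(1)$ scaled characters (Lemma~\ref{approx}). For each such character the paper then uses rigidity a \emph{second} time: writing $g(j)=f(T^jx)$, one replaces $g(j)$ by the average $\frac{1}{L}\sum_{i<L}g(j+im)$ along an earlier rigidity scale $m$, transferring the problem to the short character sum $\sum_{i<L}\chi(j+im)$, which is controlled on average in $j$ by a Burgess-type $L^2$ bound (Lemma~\ref{arithm}, feeding into Lemma~\ref{scalchar}). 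Your scheme uses rigidity only once, in the initial reduction; that second application is what makes the argument close.
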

Notice, that this in particular implies density of the orbit $(T^{n^C}x)$
for any starting point $x\in X$.

Again, the method does not seem to work for general polynomials in place of $n^C$. This is because we rely on the fact, that taking $C$-th powers is a homomorphism of the multiplicative groups $(\mathbb{Z}/p\mathbb{Z})^{*}$.

We also outline the limitations of this method, by providing certain counterexamples adapted from (\cite{PNT}, Section 11). We show that, under the weaker type of rigidity conditions we consider in Section \ref{weakrig}, one cannot hope for convergence of the whole sequence of ergodic averages along squares, rather than just the subsequence of averages up to $q_n$. We are also interested in the necessity of the condition that $(\omega(q_n))$ is bounded -- although we cannot prove that it is necessary, we show that this is the case for a stronger version of Theorem \ref{refin}, which we use in the proof of Theorem \ref{refin}.

\subsection{Open questions}

We naturally wonder whether the limitations of our methods can be circumvented:
\begin{question}
    Does a result analogous to Theorem \ref{strrigid} (possibly with a stronger rigidity condition) hold for polynomials of higher degree than 3?
\end{question}

\begin{question}
    Does a result analogous to Theorem \ref{refin} hold for other polynomials than $P(n)=n^C$?
\end{question}

\begin{question}
    Does a result analogous to Theorem \ref{refin} hold without the condition of boundedness of $(\omega(q_n))$?
\end{question}

It would also be interesting to obtain more examples of systems satisfying (\ref{rigid}), in particular ones related to the questions we mentioned at the end of Section \ref{intr}.
\begin{question}
    Are there uniquely ergodic systems satisfying (\ref{rigid}), for which
    \[
    \lim\limits_{n\to\infty}\sum\limits_{i<n}\frac{1}{n}f(T^{n^3}x)
    \]
    does not exist for some $x\in X$?
\end{question}
\begin{question}
    Are there mixing systems satisfying (\ref{rigid})?
\end{question}

\section{Notation and facts}
\subsection{General}

We use the convention $0\in\mathbb{N}$.

For two functions $f, g$ we write 
\[
f=O(g)\quad\iff\quad \limsup_{x\to\infty}\frac{\abs{f(x)}}{\abs{g(x)}}<\infty
\]
and
\[
f=o(g)\quad\iff\quad\lim_{x\to\infty}\frac{f(x)}{g(x)}=0.
\]
We also use $f\ll g$ to mean $f=O(g)$, and $f\approx g$ if $f\ll g\ll f$.

For $x\in\mathbb{R}$ we denote
\[
e(x)=e^{2\pi ix}
\]
and 
\[
||x||=\min_{a\in\mathbb{Z}}|x-a|.
\]

We will use the method of Poisson summation; see e.g. (\cite{iwaniec}, p.69 -- 71).

\begin{thm}
    For any Schwartz function $f\in \mathcal{S}(\mathbb{R})$ and any numbers $v\in\mathbb{R}^+$ and $u\in\mathbb{R}$ we have
    \[
    \sum\limits_{n\in\mathbb{Z}}f(vn+u)=\frac{1}{v}\sum\limits_{m\in\mathbb{Z}}\widehat{f}\left(\frac{m}{v}\right)e\left(\frac{um}{v}\right).
    \]
\end{thm}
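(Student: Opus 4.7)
The plan is to reduce the statement to the classical Poisson summation formula $\sum_{n\in\mathbb{Z}} g(n) = \sum_{m\in\mathbb{Z}} \widehat{g}(m)$ for a Schwartz function $g$, and then obtain the displayed identity by applying it to a suitably rescaled and shifted copy of $f$.

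First I would establish the classical formula. The standard route is to form the periodization $F(x) = \sum_{n\in\mathbb{Z}} f(x+n)$, which converges absolutely to a smooth $1$-periodic function since $f\in\mathcal{S}(\mathbb{R})$. Unfolding the sum inside the integral computes its Fourier coefficients:
\[
\int_0^1 F(x)\, e(-mx)\, dx = \int_{\mathbb{R}} f(y)\, e(-my)\, dy = \widehat{f}(m).
\]
Because $F$ is smooth, its Fourier series converges pointwise to $F$, and evaluating at $x=0$ yields $\sum_{n} f(n) = \sum_{m} \widehat{f}(m)$.

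Next I would apply the classical formula to the Schwartz function $g(x) := f(vx+u)$. The substitution $y = vx + u$ in the Fourier integral gives
\[
\widehat{g}(\xi) = \int_{\mathbb{R}} f(vx+u)\, e(-\xi x)\, dx = \frac{1}{v}\, e\!\left(\frac{u\xi}{v}\right) \widehat{f}\!\left(\frac{\xi}{v}\right),
\]
and plugging this together with $g(n) = f(vn+u)$ into $\sum_n g(n) = \sum_m \widehat{g}(m)$ reproduces exactly the displayed identity.

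The only step that is not purely mechanical is the pointwise convergence of the Fourier series of $F$ at $x=0$; this is nevertheless straightforward, since the Schwartz hypothesis forces $\widehat{f}(m)$ to decay faster than any polynomial, so the series converges absolutely and uniformly. Once the classical Poisson formula is in place, the general version follows from a routine change of variables, so I do not expect any serious obstacle.
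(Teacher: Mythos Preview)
Your argument is correct and is the standard proof of Poisson summation: periodize, identify the Fourier coefficients of the periodization with $\widehat{f}(m)$, invoke absolute convergence of the Fourier series (guaranteed by the Schwartz decay of $\widehat{f}$), and then deduce the general version by applying the case $u=0,\ v=1$ to $g(x)=f(vx+u)$.

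There is nothing to compare against here: the paper does not prove this statement at all. It merely quotes the formula and refers the reader to Iwaniec--Kowalski (p.~69--71) for the proof, where exactly the argument you outline appears. So your proposal is not just compatible with the paper's approach --- it is the omitted proof.
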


We will also use the following slight variation of van der Corput's inequality; for completeness we provide the proof.
\begin{thm}[van der Corput's inequality in the periodic case]\label{vdC} 
Let $(a_n)$ be a sequence of complex numbers of magnitude at most 1, and assume it is periodic with period $N$. Then for any $H\in\mathbb{Z}^+$ we have
\[
\frac{1}{N}\abs{\sum\limits_{n<N}a_n}\leqslant \left(\frac{2}{H}\sum\limits_{h<H}\abs{\frac{1}{N}\sum\limits_{n<N}a_n\overline{a_{n+h}}}\right)^{1/2}
\]
\end{thm}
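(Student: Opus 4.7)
The plan is the classical shift-and-Cauchy--Schwarz argument, with the $N$-periodicity used to remove the boundary terms that usually have to be tracked. By periodicity $\sum_{n<N} a_{n+h} = \sum_{n<N} a_n$ holds for every integer $h$, so averaging over $0\leqslant h<H$ yields $\sum_{n<N} a_n = \frac{1}{H}\sum_{n<N}\sum_{h<H} a_{n+h}$. Applying Cauchy--Schwarz in the variable $n$ gives
\[
\abs{\sum_{n<N} a_n}^2 \leqslant \frac{N}{H^2}\sum_{n<N}\abs{\sum_{h<H} a_{n+h}}^2.
\]

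Next I would expand the inner square as $\sum_{h_1, h_2 < H} a_{n+h_1}\overline{a_{n+h_2}}$, swap the order of summation, and use periodicity to rewrite $\sum_{n<N} a_{n+h_1}\overline{a_{n+h_2}} = \sum_{n<N} a_n\overline{a_{n+(h_2-h_1)}}$. Collecting terms by the difference $h = h_2-h_1$ (which has multiplicity $H-|h|$ for $|h|<H$) and bounding this multiplicity trivially by $H$ yields
\[
\abs{\sum_{n<N} a_n}^2 \leqslant \frac{N}{H}\sum_{|h|<H}\abs{\sum_{n<N} a_n\overline{a_{n+h}}}.
\]

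Finally, another periodicity shift $n \mapsto n-h$ combined with complex conjugation shows that the correlation sums for $h$ and $-h$ have equal modulus, so one can symmetrize the sum to $h\geqslant 0$; isolating the $h=0$ term (which contributes $\sum_{n<N}|a_n|^2\leqslant N$) gives
\[
\abs{\sum_{n<N} a_n}^2 \leqslant \frac{N}{H}\sum_{n<N}|a_n|^2 + \frac{2N}{H}\sum_{0<h<H}\abs{\sum_{n<N} a_n\overline{a_{n+h}}}.
\]
This is already stronger than the claimed bound, since the target's right-hand side $\frac{2N}{H}\sum_{h<H}\abs{\sum_{n<N} a_n\overline{a_{n+h}}}$ exceeds the last display by $\frac{N}{H}\sum_{n<N}|a_n|^2\geqslant 0$. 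Dividing by $N^2$ and taking square roots then gives the desired inequality. There is no substantive obstacle; the only step that requires a touch of care is the symmetrization $h\leftrightarrow -h$, which relies essentially on periodicity — without it the argument incurs the usual additive $(H-1)/N$ correction.
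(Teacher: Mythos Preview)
Your argument is correct and is essentially the same as the paper's: shift using periodicity, apply Cauchy--Schwarz, expand the square and reindex by the difference $h=h_2-h_1$, then symmetrize using periodicity to reduce to $h\geqslant 0$. Your additional observation that isolating the $h=0$ term gives a slightly sharper bound is fine but not needed, since the paper simply bounds the Fej\'er weight $H-|h|$ by $H$ and uses the same symmetry in one step.
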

\begin{proof}
    Using the Cauchy-Schwartz inequality and periodicity of $(a_n)$, we get
    \begin{gather*}
    \abs{\sum\limits_{n<N}a_n}^2=\abs{\frac{1}{H}\sum\limits_{n<N}\sum\limits_{h<H}a_{n+h}}^2\leqslant \frac{N}{H^2}\sum\limits_{n<N}\abs{\sum\limits_{h<H}a_{n+h}}^2=\frac{N}{H^2}\sum\limits_{n<N}\sum\limits_{k, l<H}a_{n+k}\overline{a_{n+l}}= \\ =\frac{N}{H^2}\sum\limits_{n<N}\sum\limits_{|h|<H}\abs{H-h}a_{n}\overline{a_{n+h}}\leqslant \frac{2N}{H}\sum\limits_{h<H}\abs{\sum\limits_{n<N}a_{n}\overline{a_{n+h}}}.
    \end{gather*}

\end{proof}

    Let us also state without proof a classical fact concerning continuity of the translation operator in the $L^1$ norm.

\begin{fact}\label{conttrans}
    Consider $f\in L^1(\mathbb{T})$, and a sequence of real numbers $(\alpha_n)$ converging to 0. Then 
    \[
   \lim\limits_{n\to\infty} \int_\mathbb{T} \abs{f(x)-f(x-\alpha_n)}dx=0
    \]
\end{fact}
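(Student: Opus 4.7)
The plan is a standard $3\varepsilon$ density argument. Continuous functions are dense in $L^1(\mathbb{T})$, and on the compact torus any continuous function is uniformly continuous, which immediately gives the claim for $g\in C(\mathbb{T})$. The bridge from continuous functions to general $L^1$ uses that Lebesgue measure on $\mathbb{T}$ is translation invariant, so $f\mapsto f(\cdot-\alpha)$ is an isometry of $L^1(\mathbb{T})$.

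Concretely, I would fix $\varepsilon>0$ and choose $g\in C(\mathbb{T})$ with $\int_\mathbb{T}|f-g|\,dx<\varepsilon/3$. By uniform continuity of $g$ there exists $\delta>0$ such that $|g(x)-g(x-\alpha)|<\varepsilon/3$ for every $x\in\mathbb{T}$ whenever $|\alpha|<\delta$; integrating gives $\int_\mathbb{T}|g(x)-g(x-\alpha)|\,dx<\varepsilon/3$. Since $\alpha_n\to 0$, this bound holds for all sufficiently large $n$. Writing
\[
f(x)-f(x-\alpha_n)=[f(x)-g(x)]+[g(x)-g(x-\alpha_n)]+[g(x-\alpha_n)-f(x-\alpha_n)]
\]
and using translation invariance of the integral to rewrite
\[
\int_\mathbb{T}|g(x-\alpha_n)-f(x-\alpha_n)|\,dx=\int_\mathbb{T}|g-f|\,dx<\varepsilon/3,
\]
the triangle inequality yields $\int_\mathbb{T}|f(x)-f(x-\alpha_n)|\,dx<\varepsilon$ for all large $n$, which is what we wanted.

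The only nontrivial ingredient is density of $C(\mathbb{T})$ in $L^1(\mathbb{T})$, which follows from the standard chain: any $L^1$ function is approximated by simple functions, each simple function by indicator functions of measurable sets, each such indicator by indicators of open sets (outer regularity of Lebesgue measure), and each open set by a continuous bump via Urysohn's lemma. Alternatively one could invoke density of trigonometric polynomials via Fejér's theorem. There is no real obstacle; this is entirely textbook material, which is presumably why the author opts to cite it rather than prove it.
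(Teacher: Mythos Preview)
Your proof is correct and entirely standard. Note that the paper does not actually prove this statement: it is introduced with ``Let us also state without proof a classical fact concerning continuity of the translation operator in the $L^1$ norm,'' so there is no paper proof to compare against, and your density-plus-uniform-continuity argument is exactly the textbook justification the author is implicitly invoking.
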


\subsection{Arithmetic functions}
We use several standard arithmetic functions throughout the text:
\begin{defn}
    For $n\in\mathbb{Z}^+$ we denote
    \begin{gather*}
    \varphi(n)=n\cdot\prod_{p|n}\left(1-\frac{1}{p}\right)\\
    \omega(n)=\sum\limits_{p|n}1\\
    \tau(n)=\sum\limits_{d|n}1\\
    \text{for $p$ prime} \quad \nu_p(n)=\max\{e\in\mathbb{N}: p^e|n\}
    \
    \end{gather*}
\end{defn}
We will use the following simple bound for the size of $\tau(n)$, proven e.g. in (\cite{hardy-wright}, Theorem 315, p. 343).
\begin{thm}\label{taubound}
    \[
    \tau(n)=o(n^{\delta})
    \]
    for any $\delta>0$.
\end{thm}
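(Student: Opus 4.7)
The plan is to first prove the apparently weaker $O$-statement $\tau(n)\ll_\delta n^\delta$ for every $\delta>0$, and then deduce the $o$-statement by a standard trick: apply the $O$-bound with $\delta/2$ in place of $\delta$, obtaining $\tau(n)\leq C_{\delta/2}\, n^{\delta/2}$, whence $\tau(n)/n^\delta \leq C_{\delta/2}\,n^{-\delta/2}\to 0$. So the entire problem reduces to showing $\tau(n)=O_\delta(n^\delta)$.

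For the $O$-bound, I would exploit multiplicativity. Writing $n=\prod_i p_i^{a_i}$, we have $\tau(n)=\prod_i(a_i+1)$ and $n^\delta=\prod_i p_i^{\delta a_i}$, so
\[
\frac{\tau(n)}{n^\delta}=\prod_i\frac{a_i+1}{p_i^{\delta a_i}}.
\]
Set the threshold $P:=2^{1/\delta}$ and split the product according to whether $p_i>P$ or $p_i\leq P$.

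For the large primes $p_i>P$ we have $p_i^{\delta}>2$, so $p_i^{\delta a_i}>2^{a_i}\geq a_i+1$, and every such factor is bounded above by $1$. For the small primes $p\leq P$ (of which there are only finitely many, depending on $\delta$) the function $a\mapsto (a+1)/p^{\delta a}$ is continuous in $a\geq 0$ and tends to $0$ as $a\to\infty$, hence attains a finite maximum $M_p$. Taking the finite product $C_\delta:=\prod_{p\leq P}M_p$ gives $\tau(n)/n^\delta\leq C_\delta$, which is the desired $O$-bound.

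There is no real obstacle: the argument is a textbook estimate (this is essentially the proof in Hardy--Wright that the excerpt cites). The only point worth being careful about is the distinction between $O$ and $o$, which is handled by the halving trick above. Everything else is elementary manipulation of the multiplicativity of $\tau$ together with the fact that $a+1$ grows subexponentially in $a$.
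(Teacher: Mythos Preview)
Your argument is correct and is essentially the standard Hardy--Wright proof; note that the paper does not actually prove this statement but simply cites (\cite{hardy-wright}, Theorem 315), so there is no separate ``paper's proof'' to compare against beyond the reference you have reproduced.
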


We will also need the notion of characters; for a broader overview see e.g. (\cite{iwaniec}, p.43 -- 45).

    A character of a finite abelian group $G$ is a homomorphism from $G$ to $\mathbb{C}^{*}$. The order of a character $\chi$ is the smallest $d>0$ for which $\chi^d=1$. Characters of $G$ form a group isomorphic to $G$, denoted $\widehat{G}$.

    We have the formula
    \begin{equation}\label{ortho}
    \sum\limits_{\chi\in \widehat{G}}\chi(g)=\begin{cases}
        |G|\quad\text{for $g=e_G$} \\ 0\quad\text{otherwise}.
    \end{cases}
    \end{equation}

    A Dirichlet character of modulus $m$ is a function $\chi:\mathbb{Z}\to\mathbb{C}$ defined by
    \[
    \chi(a)=\begin{cases}\chi'(a) \quad\text{when $\gcd(a, m)=1$}\\ 0\quad\text{otherwise}\end{cases}
    \]
   for a character $\chi'$  of the group $(\mathbb{Z}/m\mathbb{Z})^{*}$. We call $\chi$ primitive if it is of order 1.

    The relation (\ref{ortho}) implies
    \[
     \sum\limits_{\chi\in \widehat{G}}\chi(g)\overline{\chi(h)}=\begin{cases}
        |G|\quad\text{for $g=h$} \\ 0\quad\text{otherwise},
    \end{cases}
    \]
 for $h\in G$. In the language of Dirichlet characters this becomes
 \begin{equation}\label{ortho2}
  \frac{1}{\varphi(m)}\sum\limits_{\chi}\chi(x)\overline{\chi(t)}=\mathbf{1}_{m\mathbb{Z}+t}(x)
 \end{equation}
when $\gcd(m, t)=1$, where the sum is over all $\varphi(m)$ Dirichlet characters of modulus $m$.

We will use the following results about characters.

\begin{thm}\cite{burgess}\label{burg}
    For any natural $n, h$ and any nonprincipal character $\chi$ of modulus $n$ we have
    \[
    \sum\limits_{x<n}\left\lvert\sum_{i<h}\chi(x+i)\right\rvert^2<nh.
    \]
\end{thm}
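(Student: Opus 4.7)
The plan is to apply Plancherel's identity on the cyclic group $\mathbb{Z}/n\mathbb{Z}$ and then exploit the mean-zero property of nonprincipal Dirichlet characters. First I would set $f(x)=\sum_{i<h}\chi(x+i)$. Viewed as a function on $\mathbb{Z}/n\mathbb{Z}$, $f$ is the convolution of $\chi$ with the indicator of an interval of length $h$, so its discrete Fourier transform factors as $\hat f(\xi)=\hat\chi(\xi)\,G(\xi)$, where $\hat\chi(\xi)=\sum_{y\pmod n}\chi(y)e(-y\xi/n)$ and $G(\xi)=\sum_{i<h}e(i\xi/n)$ is a geometric sum. Plancherel then gives
\[
\sum_{x<n}|f(x)|^2 = \frac{1}{n}\sum_{\xi\pmod n}|\hat\chi(\xi)|^2\,|G(\xi)|^2.
\]

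Next I would use nonprincipality: by orthogonality, $\hat\chi(0)=\sum_{y}\chi(y)=0$, so the $\xi=0$ term drops out. Parseval applied to the interval indicator gives $\sum_{\xi}|G(\xi)|^2=nh$; combined with $|G(0)|^2=h^2$, this yields $\sum_{\xi\neq 0}|G(\xi)|^2=h(n-h)$. If one had the pointwise bound $|\hat\chi(\xi)|^2\leq n$ for $\xi\neq 0$ -- the classical Gauss-sum estimate, valid when $\chi$ is primitive modulo $n$ -- one would immediately conclude
\[
S \leq \frac{1}{n}\cdot n\cdot h(n-h) = h(n-h) < nh,
\]
with strict inequality since $|G(\xi)|^2<h^2$ whenever $\xi\neq 0$ and $h<n$ (and the statement is trivial for $h\geq n$ since $f\equiv 0$ then).

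The main obstacle is that the pointwise bound $|\hat\chi(\xi)|^2\leq n$ can fail for \emph{imprimitive} characters: if $\chi$ modulo $n$ is induced by a primitive $\chi^*$ modulo $m\mid n$ with $m<n$, then $\hat\chi$ is essentially supported on the multiples of $n/m$, at which $|\hat\chi(\xi)|^2$ can reach $n\cdot(n/m)$. To handle this I would reduce to the primitive case: writing $\chi(y)=\chi^*(y)\cdot\mathbf{1}[\gcd(y,n)=1]$ and expanding the indicator by M\"obius inversion, one rewrites $S$ in terms of the analogous sum for $\chi^*$ modulo $m$, to which the primitive version of the bound applies. The key observation making the reduction work is that at the very frequencies $\xi\in(n/m)\mathbb{Z}$ where $|\hat\chi(\xi)|^2$ is large, $G(\xi)$ degenerates into an exponential sum of period $m$ whose $\ell^2$-mass over a period is only $mh$, providing the exact compensation needed. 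The bookkeeping around this compensation, rather than the Plancherel step itself, is the delicate part of the argument.
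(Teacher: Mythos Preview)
The paper does not give its own proof of this statement; it is quoted from Burgess \cite{burgess} and used as a black box, so there is nothing in the paper to compare against.

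Your Plancherel framework is correct, and the primitive case is complete as you wrote it. The gap is in the imprimitive reduction. The M\"obius expansion you propose does not cleanly rewrite $S$ as the analogous sum for $\chi^*$ modulo $m$: after expanding $\mathbf{1}[\gcd(y,n)=1]$ you obtain sums of $\chi^*$ over arithmetic progressions whose lengths and endpoints depend on $x$, and squaring produces cross terms between different divisors that do not collapse into copies of the primitive bound. Your ``frequency compensation'' remark is the right intuition, and it can be turned into a complete argument without any such bookkeeping. The missing observation is that $|\hat\chi(a\xi)|=|\hat\chi(\xi)|$ for every unit $a\pmod n$ (substitute $y\mapsto a^{-1}y$ in the Gauss sum), and this holds for \emph{any} Dirichlet character, primitive or not. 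Hence $|\hat\chi(\xi)|^2$ is constant on each set $\{\xi':\gcd(\xi',n)=d\}$, so in your Plancherel identity $|G(\xi)|^2$ may be replaced by its average over that set. Writing $\xi'=d\zeta$ with $\gcd(\zeta,n/d)=1$, this average is
\[
\frac{1}{\varphi(n/d)}\sum_{\gcd(\zeta,n/d)=1}\Bigl|\sum_{i<h}e\bigl(i\zeta/(n/d)\bigr)\Bigr|^{2}\;<\;\frac{(n/d)\,h}{\varphi(n/d)}\;\le\;\frac{n}{\varphi(n)}\,h
\]
for every $d<n$: the first inequality is Parseval on $\mathbb{Z}/(n/d)\mathbb{Z}$ after discarding the contribution of $\zeta=0$, and the second is the monotonicity of $k/\varphi(k)=\prod_{p\mid k}p/(p-1)$ in the prime support of $k$. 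Feeding this uniform bound back in, together with $\hat\chi(0)=0$ and $\sum_\xi|\hat\chi(\xi)|^2=n\varphi(n)$, yields $S<nh$ directly, with no primitive/imprimitive case split.
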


\begin{thm}(\cite{keith}, Theorem A.1)\label{keith}
            Let $\chi$ be a nontrivial character of $\mathbb{F}_p^{*}$ order $k$, and let $\varepsilon_1, \varepsilon_2\in\mathbb{C}$ be any $k$-th roots of unity. Then for any $i\in \mathbb{F}_p^{*}$ the number
            \[
            N=|\{x\in\mathbb{F}_p: \chi(x)=\varepsilon_1, \chi(x+i)=\varepsilon_2\}|
            \]
            satisfies
            \[
            \abs{N-\frac{p}{k^2}}<\sqrt{p}+1.
            \]
        \end{thm}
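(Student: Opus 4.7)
The plan is to expand both indicator functions in the definition of $N$ using orthogonality of characters on the cyclic group $\mu_k$ of $k$-th roots of unity in $\mathbb{C}^*$. Since $\chi$ maps $\mathbb{F}_p^{*}$ onto $\mu_k$ and both $\varepsilon_1, \varepsilon_2 \in \mu_k$, for any $x \in \mathbb{F}_p^*$ we have
\[
\mathbf{1}_{\chi(x) = \varepsilon_1} = \frac{1}{k}\sum_{j=0}^{k-1}\varepsilon_1^{-j}\chi^j(x),
\]
and analogously for $\chi(x+i) = \varepsilon_2$. Adopting the convention $\chi^j(0) = 0$ for every $j$ (so that the summation automatically restricts to $x \neq 0, -i$), substituting, and swapping sums yields
\[
N = \frac{1}{k^2}\sum_{j,l = 0}^{k-1}\varepsilon_1^{-j}\varepsilon_2^{-l}S_{j,l}, \qquad S_{j,l} = \sum_{x \in \mathbb{F}_p}\chi^j(x)\chi^l(x+i).
\]
The isolated term $(j,l) = (0,0)$ contributes $(p-2)/k^2$, which is the main term $p/k^2$ up to an error of at most $2/k^2$.

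I would then partition the remaining $k^2 - 1$ pairs $(j,l)$ into three families according to how degenerate the product $\chi^j\chi^l$ is. If exactly one of $j, l$ vanishes, orthogonality of a single nontrivial character of $\mathbb{F}_p^{*}$ collapses $S_{j,l}$ to $-\chi^l(i)$ or $-\chi^j(-i)$, of modulus at most $1$. If $j, l \neq 0$ but $j + l \equiv 0 \pmod{k}$, then $\chi^j\chi^l$ is trivial and the substitution $y = x/(x+i)$ identifies $S_{j,l}$ with a single-character sum equal to $-1$. Finally, in the generic case where $\chi^j$, $\chi^l$ and $\chi^j\chi^l$ are all nontrivial, the Weil bound for binomial character sums gives $\abs{S_{j,l}} \leq \sqrt{p}$.

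Summing contributions: the boundary and diagonal families together contain $O(k)$ terms, each contributing $O(1/k^2)$ to $N$ after absorbing the prefactor $1/k^2$, while the generic family has $(k-1)(k-2)$ terms, contributing at most $(k-1)(k-2)\sqrt{p}/k^2 < \sqrt{p}$. Combining with the $2/k^2$ correction from the main term, and applying the triangle inequality carefully on the bounded families so as not to waste constants, yields $\abs{N - p/k^2} < \sqrt{p} + 1$. The main obstacle is securing the exact form of Weil's bound
\[
\abs{\sum_{x \in \mathbb{F}_p}\chi_1(x)\chi_2(x+i)} \leq \sqrt{p}
\]
for nontrivial characters $\chi_1, \chi_2$ of $\mathbb{F}_p^{*}$ with $\chi_1\chi_2$ nontrivial; this is a standard consequence of the Riemann hypothesis for curves over finite fields, and once it is available the remainder of the proof reduces to elementary character orthogonality and careful bookkeeping in order to maintain the clean constant $\sqrt{p} + 1$ rather than a $k$-dependent one.
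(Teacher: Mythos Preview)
The paper does not prove this theorem at all: it is quoted as Theorem~A.1 from Conrad's note \cite{keith} and used as a black box in the proof of Lemma~\ref{sparsing}, so there is no in-paper argument to compare against. Your sketch is correct and is exactly the standard proof (and the one Conrad gives): expand the two indicator conditions via orthogonality on $\mu_k$, separate the $(j,l)=(0,0)$ term as the main term $(p-2)/k^2$, bound the $3(k-1)$ ``degenerate'' pairs (one index zero, or $j+l\equiv 0$) by $1$ each, and bound the remaining $(k-1)(k-2)$ generic terms by $\sqrt{p}$ via the Jacobi-sum identity $\sum_{x}\chi^j(x)\chi^l(x+i)=\chi^j(-i)\chi^l(i)J(\chi^j,\chi^l)$ together with $|J(\chi^j,\chi^l)|=\sqrt{p}$ when $\chi^j,\chi^l,\chi^{j+l}$ are all nontrivial. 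The resulting estimate
\[
\Bigl|N-\frac{p}{k^2}\Bigr|\leq \frac{(k-1)(k-2)}{k^2}\sqrt{p}+\frac{3k-1}{k^2}
\]
is strictly below $\sqrt{p}+1$ for every $k\geq 2$, so the constant is clean as claimed. One small remark: you do not actually need the full Riemann hypothesis for curves here---the exact evaluation $|J(\chi_1,\chi_2)|=\sqrt{p}$ for nontrivial $\chi_1,\chi_2,\chi_1\chi_2$ follows elementarily from the Gauss-sum relation $J(\chi_1,\chi_2)=g(\chi_1)g(\chi_2)/g(\chi_1\chi_2)$ and $|g(\chi)|=\sqrt{p}$, so the ``main obstacle'' you flag is lighter than you suggest.
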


\subsection{Dynamical systems}

Throughout the text we will focus on uniquely ergodic topological dynamical systems. By a topological dynamical system we mean a pair $(X, T)$, where $X$ is a compact metric space and $T:X\to X$ a homeomorphism. 

\begin{defn}
A topological dynamical system $(X, T)$ is uniquely ergodic if the Borel $\sigma$-algebra of $X$ admits a unique $T$-invariant measure.
\end{defn}

We use $\mu$ exclusively to denote this unique invariant measure. 

\begin{defn}
    A topological dynamical system $(X, T)$ is totally uniquely ergodic if the systems $(X, T^n)$ are uniquely ergodic for all $n\in\mathbb{Z}^+$.
\end{defn}

The measure $\mu$ provides a uniquely ergodic system $(X, T)$ with the structure of a measure-preserving dynamical system:
\begin{defn}
    A measure-preserving dynamical system is a collection $(X, \mathcal{B}, \mu, T)$, where $(X, \mathcal{B}, \mu)$ is a measure space with $\mu(X)=1$, and $T:X\to X$ is a measurable map which preserves the measure $\mu$.
\end{defn}

We will be interested in weakly mixing dynamical systems. One of the equivalent characterization of weak mixing (that we will use as definition in what follows) is

\begin{defn}
    A measure-preserving dynamical system $(X, \mathcal{B}, \mu, T)$ is weakly mixing if for any $A, B\in\mathcal{B}$ we have
    \[
    \lim\limits_{n\to\infty}\frac{1}{n}\sum\limits_{i<n}\abs{\mu(T^{-i}A\cap B)-\mu(A)\mu(B)}=0.
    \]
\end{defn}

\subsection{Special flows and skew product systems}
Here we define two classes of dynamical systems, to which we are able to apply our abstract results on sparse equidistribution. 
\begin{defn}[Special flows over rotations]
    For a continuous function $g:\mathbb{T}\to (0, \infty)$ we let $X_g$ be the quotient of $\mathbb{T}\times\mathbb{R}$ by the relation
    \[
    (x, y)\sim (x+\alpha, y-g(x)),
    \]
    and we equip it with the normalized Lebesgue measure and the metric induced from the Euclidean metric on $\mathbb{T}\times\mathbb{R}$. We denote by by $(T_t)$ the continuous flow on $X_g$ given by
    \[
    T_t([(x, y)])=[(x, y+t)],
    \]
    and we call it the special flow on $\mathbb{T}$ under $g$.
    
\end{defn}
For a more explicit equivalent definition, notice that we can identify $X_g$ with the set
\[
\mathbb{T}_g:=\left\{(x, y)\in \mathbb{T}\times \mathbb{R}: 0\leqslant y<g(x)\right\}
\]
(with the appropriate metric), since it contains precisely one representant of each equivalence class of $\sim$. In this case the flow moves each point up with unit speed, and identifies $(x, g(x))$ with $(x+\alpha, 0)$.
In particular, if we define the function $S_a(g):\mathbb{T}\to\mathbb{R}$ for $a\in\mathbb{Z}$ as
\[
S_a(g)(x)=\begin{cases}g(x)+\ldots + g(x+(a-1)\alpha)\quad\text{for $a>0$}\\ 0 \quad\text{for $a=0$}\\-g(x-\alpha)-\ldots -g(x-a\alpha)\quad\text{for $a<0$}\end{cases}
\]
then
\[
T_t(x, y)=\left(x+a\alpha, y+t-S_a(g)(x)\right)
\]
for the unique integer $a$ satisfying
\[
S_a(g)(x)\leqslant t+y<S_{a+1}(g)(x).
\]
The flow $(T_t)$ understood in this way preserves the Lebesgue measure on $X_g$, and this is the only measure preserved by this flow. Under a certain assumption on $g$, it turns out to also be the only measure preserved by the maps $T_C$:
\begin{fact}\label{meascond}
    Fix an irrational $\alpha$ and a continuous function $g:\mathbb{T}\to\mathbb{R}^+$, and assume that the equation
    \[
    rg(x)\equiv \psi(x+\alpha)-\psi(x)\pmod{1}
    \]
    has no measurable solutions for $r\in\mathbb{R}\setminus\{0\}$. Then the system $(X_g, T_C)$ is uniquely ergodic and weakly mixing for any $C>0$. 
\end{fact}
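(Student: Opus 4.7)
The plan is to pass information from the full flow $(T_t)_{t\in\mathbb{R}}$ down to the discrete time-$C$ map. First I would show unique ergodicity of $(T_t)$: because $g$ is continuous and bounded below by a positive constant and the base rotation $R_\alpha$ is uniquely ergodic, a standard argument shows that the normalized Lebesgue measure $\mu$ on $X_g$ is the only Borel probability measure invariant under the flow.

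Next I would identify the $L^2$ eigenvalues of $(T_t)$ with solutions of the cohomological equation. If $F\in L^2(X_g,\mu)\setminus\{0\}$ satisfies $F\circ T_t=e^{2\pi irt}F$ for all $t$, then on the fundamental domain $\mathbb{T}_g$ one must have $F(x,y)=e^{2\pi iry}f(x)$, and the identification $(x,g(x))\sim(x+\alpha,0)$ forces $f(x+\alpha)=e^{2\pi irg(x)}f(x)$. Since $R_\alpha$ is ergodic on $\mathbb{T}$, $|f|$ is a.e.\ constant, so $f=e^{2\pi i\psi}$ for some measurable $\psi:\mathbb{T}\to\mathbb{R}$, and the eigenfunction equation reduces to $rg(x)\equiv\psi(x+\alpha)-\psi(x)\pmod 1$. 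The hypothesis says precisely that no such $\psi$ exists for $r\neq 0$, so $(T_t)$ is weakly mixing.

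Then I would derive unique ergodicity of $T_C$. Let $\nu$ be any $T_C$-invariant probability measure; the flow average $\frac{1}{C}\int_0^C(T_t)_*\nu\,dt$ is $(T_t)$-invariant and hence equals $\mu$. Consider the $T_C$-ergodic decomposition $\mu=\int_\Omega\nu_\omega\,d\rho(\omega)$. Since $T_C$ commutes with the flow, $(T_t)$ permutes the ergodic components, inducing a measure preserving action on $(\Omega,\rho)$; because $T_C$ acts trivially on $\Omega$, this action factors through the compact group $\mathbb{R}/C\mathbb{Z}$. Pulling functions on $\Omega$ back to $T_C$-invariant functions on $X_g$ embeds $L^2(\Omega,\rho)$ as a $(T_t)$-invariant subspace of $L^2(X_g,\mu)$, and the Peter--Weyl decomposition for $\mathbb{R}/C\mathbb{Z}$ writes it as a direct sum of $(T_t)$-eigenspaces with eigenvalues in $\frac{1}{C}\mathbb{Z}$. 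Weak mixing of the flow collapses all of these to the constants, so $\Omega$ is trivial and $\nu=\mu$.

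Finally, weak mixing of $(X_g,\mu,T_C)$ will follow by spectral pushforward: for any $f\in L^2(X_g,\mu)$ orthogonal to the constants, the spectral measure of $f$ for $T_C$ on $\mathbb{T}$ is the image under $r\mapsto Cr\bmod 1$ of its spectral measure for $(T_t)$ on $\mathbb{R}$; the latter has no atoms by weak mixing of the flow, and fibers of $r\mapsto Cr\bmod 1$ are countable, so the pushforward has no atoms either. I expect the ergodic decomposition step to be the main subtlety, in particular verifying that the induced $\mathbb{R}/C\mathbb{Z}$-action on the component space is genuinely well defined modulo $\rho$-null sets and that its Peter--Weyl decomposition lifts to honest $(T_t)$-eigenfunctions in $L^2(X_g,\mu)$; the remaining steps are essentially bookkeeping.
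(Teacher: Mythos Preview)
Your proposal is correct and follows the same route as the paper: deduce weak mixing of the special flow from the cohomological hypothesis, then transfer both unique ergodicity and weak mixing down to the time-$C$ map. The paper does not actually carry out these steps but simply cites references for each implication (Ulcigrai for weak mixing of the flow, then Auer and Benda for the passage to $T_C$), so your self-contained sketch via eigenfunctions, the induced $\mathbb{R}/C\mathbb{Z}$-action on the ergodic decomposition, and spectral pushforward is in fact more detailed than what the paper provides.
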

Indeed, under this assumption the flow $(T_t)$ is weakly mixing, i.e. for any measurable $A, B\subset X_g$ we have
\[
\frac{1}{T}\int_{0}^{T}\abs{\mu(A\cap T_{-t}(B))-\mu(A)\mu(B)}=0,
\]
as shown for example in (\cite{corinna}, Section 1.1.3). This implies that each map $T_C$ is weakly mixing (\cite{auer}, p.5) and uniquely ergodic (\cite{benda}, p.4).

\begin{defn}[Skew products over rotations]
    For an irrational number $\alpha$ and continuous function $g:\mathbb{T}\to\mathbb{T}$ we denote by $T_{\alpha, g}$ the transformation of $\mathbb{T}^2$ given by
    \[
    T(x, y)=(x+\alpha, y+g(x)).
    \]
\end{defn}
We again have a convenient way of checking unique ergodicity of the system.
\begin{fact}\label{meascond2}
    Fix an irrational $\alpha$ and a continuous function $g:\mathbb{T}\to\mathbb{R}^+$, and assume that the equation
    \begin{equation}\label{measeq}
    rg(x)\equiv \psi(x+\alpha)-\psi(x)\pmod{1}
    \end{equation}
    has no measurable solutions for $r\in\mathbb{Z}\setminus{0}$. Then the system $(\mathbb{T}^2, T_{\alpha, g})$ is uniquely ergodic.
\end{fact}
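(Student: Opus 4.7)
The plan is to show that any $T_{\alpha,g}$-invariant probability measure $\nu$ on $\mathbb{T}^2$ must equal Lebesgue measure, by analyzing its Fourier decomposition in the fiber direction and then invoking the non-solvability hypothesis.

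First I would observe that the projection of $\nu$ to the first coordinate is invariant under rotation by $\alpha$, hence equals Lebesgue measure by unique ergodicity of an irrational rotation. I would then disintegrate $\nu$ as $d\nu(x,y)=d\nu_x(y)\,dx$, where $(\nu_x)_{x\in\mathbb{T}}$ is a measurable family of probability measures on the fiber $\mathbb{T}$. Invariance of $\nu$ under $T_{\alpha,g}$ translates into the relation $\nu_{x+\alpha}=(R_{g(x)})_*\nu_x$ for almost every $x$, where $R_c(y)=y+c$.

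Next I would pass to Fourier coefficients on the fiber: set
\[
c_r(x)=\int_{\mathbb{T}} e(ry)\,d\nu_x(y)
\]
for each $r\in\mathbb{Z}$. The intertwining relation above becomes
\[
c_r(x+\alpha)=e(rg(x))\,c_r(x)\quad\text{for a.e. }x.
\]
Taking absolute values, $|c_r|$ is invariant under the (ergodic) rotation by $\alpha$, so $|c_r(x)|$ is almost everywhere equal to a constant $C_r\in[0,1]$. For $r=0$ we trivially have $c_0\equiv 1$. The main step is then to show $C_r=0$ for every $r\neq 0$: if instead $C_r>0$ for some nonzero $r$, we could write $c_r(x)=C_r\,e(\psi(x))$ for a measurable function $\psi:\mathbb{T}\to\mathbb{T}$, and the intertwining relation would give
\[
\psi(x+\alpha)-\psi(x)\equiv rg(x)\pmod{1}
\]
for a.e. $x$, directly contradicting the hypothesis of the Fact. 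Hence $c_r\equiv 0$ for all $r\neq 0$, so $\nu_x$ is Lebesgue measure on $\mathbb{T}$ for a.e. $x$, and $\nu$ is Lebesgue measure on $\mathbb{T}^2$. As $\nu$ was arbitrary, $T_{\alpha,g}$ admits a unique invariant measure.

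The main obstacle, and essentially the only non-bookkeeping step, is the passage from $|c_r(x)|\equiv C_r>0$ to a genuine measurable function $\psi$ satisfying the cohomological equation; this requires measurable choice of the argument of $c_r$, which is available because $c_r$ is measurable and nonzero almost everywhere. Everything else (disintegration, using unique ergodicity of the base, Fourier inversion on the fiber) is standard and should go through without difficulty.
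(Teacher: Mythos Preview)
Your argument is correct. The paper, however, takes a much shorter route: it simply cites Anzai's theorem (\cite{Anzai}, Theorem~2) to conclude that the hypothesis on~\eqref{measeq} forces $(\mathbb{T}^2,T_{\alpha,g})$ to be ergodic (with respect to Lebesgue), and then invokes Furstenberg's result that an ergodic isometric extension of a uniquely ergodic system is itself uniquely ergodic (\cite{einsiedler}, Theorem~4.21). Your disintegration-and-Fourier argument is in effect a direct, self-contained proof of both cited facts in this particular setting: the step ``$|c_r|$ is rotation-invariant hence constant, and if nonzero yields a measurable solution of \eqref{measeq}'' is exactly Anzai's mechanism, while applying it to an \emph{arbitrary} invariant $\nu$ (rather than just checking ergodicity of Lebesgue) is what replaces the appeal to Furstenberg. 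The trade-off is that the paper's version is two lines of citations, whereas yours is a page but requires no outside input beyond disintegration and ergodicity of irrational rotations.
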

    Indeed, by (\cite{Anzai}, Theorem 2) this system is ergodic, and ergodic skew products over uniquely ergodic systems are also uniquely ergodic by a result of Furstenberg (see e.g. \cite{einsiedler}, Theorem 4.21).

\section{Strong rigidity}
In this section we wish to prove Theorem \ref{strrigid} and provide examples of systems satisfying its assumptions, in particular obtaining weakly mixing systems in which every square orbit is equidistributed.

\subsection{Main result}

We first establish a number-theoretic result concerning the distribution of residues of small squares in arithmetic progressions.
For this, we need a lemma about exponential sums of polynomials. 

\begin{lem}\label{polysum}
Fix an integer $k\geqslant 2$, and let $P$ be an integer polynomial of degree $k$ with leading coefficient $\ell$. Let $q$ be any positive integer. Then 
    \[
    \sum\limits_{x<q}e\left(\frac{P(x)}{q}\right)=O_k\left(q\left(\frac{\gcd(q, \ell)}{q^{1-\delta}}\right)^{1/2^{k-1}}\right)
    \]
     holds for all $\delta>0$, and if $k=2$ it holds also for $\delta=0$.
\end{lem}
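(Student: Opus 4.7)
The plan is to prove the bound by induction on $k$, applying van der Corput's inequality (Theorem \ref{vdC}) at each step to reduce the degree of the polynomial by one. A single application of van der Corput replaces $P$ by a difference polynomial $P(x+h) - P(x)$ of degree $k-1$ with leading coefficient $k\ell h$, so after $k-1$ iterations one is left with a linear exponential sum that can be evaluated exactly.

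For the base case $k=2$, I would write $P(x) = \ell x^2 + bx + c$ and apply Theorem \ref{vdC} with parameter $H$ to obtain
\[
\frac{1}{q^{2}}\left|\sum_{x<q} e\!\left(\tfrac{P(x)}{q}\right)\right|^{2} \leqslant \frac{2}{H}\sum_{h<H}\left|\frac{1}{q}\sum_{x<q} e\!\left(\tfrac{P(x+h)-P(x)}{q}\right)\right|.
\]
Since $P(x+h)-P(x) = 2\ell h\cdot x + (\ell h^{2}+bh)$ is linear in $x$, the inner complete sum has modulus $q$ when $q \mid 2\ell h$ and vanishes otherwise. The condition $q \mid 2\ell h$ is equivalent to $h \equiv 0 \pmod{q/\gcd(q, 2\ell)}$, so the number of contributing $h \in [0,H)$ is at most $1 + H\gcd(q,2\ell)/q$. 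Choosing $H = \lceil q/\gcd(q,2\ell)\rceil$ balances the two sources of error and gives $|S|^{2} \ll q\gcd(q,\ell)$, which is the claimed estimate with $\delta=0$.

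For the inductive step $k \geqslant 3$, I would assume the bound with exponent $1/2^{k-2}$ for some auxiliary $\delta' < \delta$. Applying Theorem \ref{vdC} and separating the $h=0$ contribution yields
\[
\frac{|S|^{2}}{q^{2}} \leqslant \frac{2}{H} + \frac{C_k}{H}\sum_{h=1}^{H-1}\left(\frac{\gcd(q, k\ell h)}{q^{1-\delta'}}\right)^{1/2^{k-2}}.
\]
The elementary multiplicative estimate $\gcd(q,k\ell h) \leqslant \gcd(q,k\ell)\gcd(q,h) \ll_{k}\gcd(q,\ell)\gcd(q,h)$ together with
\[
\sum_{h<H}\gcd(q,h)^{1/2^{k-2}} \leqslant H\sum_{d\mid q} d^{1/2^{k-2}-1} \leqslant H\tau(q) \ll_{\eps'} Hq^{\eps'}
\]
(using Theorem \ref{taubound}) reduces this to
\[
\frac{|S|}{q} \ll_{k} \frac{1}{\sqrt{H}} + q^{\eps'/2}\left(\frac{\gcd(q,\ell)}{q^{1-\delta'}}\right)^{1/2^{k-1}}.
\]
Choosing $\eps'$ small enough that $\eps'/2 \leqslant (\delta-\delta')/2^{k-1}$ absorbs the $q^{\eps'/2}$ factor into the target exponent, and then setting $H \approx (q^{1-\delta}/\gcd(q,\ell))^{1/2^{k-2}}$ balances the two terms.

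The main technical nuisance is bookkeeping the $\eps$ losses across the induction: each level introduces a $q^{\eps}$ factor from the divisor-function bound, which must be absorbed into a slight weakening of $\delta$. This is exactly why the statement allows any $\delta > 0$ once $k\geqslant 3$ rather than $\delta=0$; the induction is clean once one fixes in advance an increasing sequence $0 < \delta_{2} < \delta_{3} < \cdots < \delta_{k} = \delta$ and chooses each auxiliary $\eps'$ small relative to the gap $\delta_{j}-\delta_{j-1}$.
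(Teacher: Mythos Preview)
Your proposal is correct and follows essentially the same Weyl-differencing strategy as the paper. The only organizational difference is that the paper iterates van der Corput $k-1$ times with $H=q$ at every step and then counts tuples $(h_1,\ldots,h_{k-1})$ for which $q\mid \ell k!\,h_1\cdots h_{k-1}$ in one shot, whereas you perform a single van der Corput step and invoke the degree-$(k-1)$ case inductively; the divisor-function loss $\tau(q)^{k-1}$ that the paper incurs at the end is the accumulated version of the $q^{\eps'}$ losses you pick up level by level. One minor simplification: in your inductive step the second term already does not depend on $H$, so there is nothing to balance---taking $H=q$ (as the paper does) makes the $1/\sqrt{H}$ term negligible and removes the need to choose $H$ carefully.
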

\begin{proof}
    For a function $f:\mathbb{R}\to\mathbb{R}$ we define the discrete derivative $\Delta_h(f)$ by the formula
    \[
    \Delta_{h}(f)(x)=f(x+h)-f(x).
    \]
    We will show that averages
    \[
    A_n=\frac{1}{q^n}\sum\limits_{h_1, \ldots, h_{n}<q}\abs{\frac{1}{q}\sum\limits_{x<q}e\left(\frac{\Delta_{h_1} \ldots\Delta_{h_n}(P)(x)}{q}\right)}
    \]
    satisfy
    \begin{equation}\label{indst}
    \abs{\frac{1}{q}\sum\limits_{x<q}e\left(\frac{P(x)}{q}\right)}\leqslant 2\left(A_n\right)^{1/2^{n}}.
    \end{equation}
    for positive integers $n$.
    Indeed, by Theorem \ref{vdC} with $N=H=q$ we get 
    \[
A_n\leqslant \frac{1}{q^{n}}\sum\limits_{h_1, \ldots, h_{n}<q}\left(\frac{2}{q}\sum\limits_{h_{n+1}<q}\abs{\frac{1}{q}\sum\limits_{x<q}e\left(\frac{\Delta_{h_{n+1}}\Delta_{h_1} \ldots \Delta_{h_{n}}(P)(x)}{q}\right)}\right)^{1/2}.
\]
By the Cauchy-Schwarz inequality, and since $\Delta_{h_{n+1}}$ commutes with each $\Delta_{h_i}$, this is at most
\[
\left(\frac{1}{q^{n}}\sum\limits_{h_1, \ldots, h_{n}<q}\frac{2}{q}\sum\limits_{h_{n+1}<q}\abs{\frac{1}{q}\sum\limits_{x<q}e\left(\frac{\Delta_{h_1} \ldots\Delta{h_{n+1}}(P)(x)}{q}\right)}\right)^{1/2}=\sqrt{2A_{n+1}},
\]
so by induction we have
\[
A_1\leqslant 2(A_n)^{1/2^{n-1}}
\]
Theorem \ref{vdC} for $N=H=q$ gives
\[
\abs{\frac{1}{q}\sum\limits_{x<q}e\left(\frac{P(x)}{q}\right)}\leqslant \sqrt{2A_1},
\]
which by the above bound implies (\ref{indst}).
In particular, we have
\begin{equation}\label{indstk}
    \abs{\frac{1}{q}\sum\limits_{x<q}e\left(\frac{P(x)}{q}\right)}\leqslant 2\left(\frac{1}{q^{k-1}}\sum\limits_{h_1, \ldots, h_{k-1}<q}\abs{\frac{1}{q}\sum\limits_{x<q}e\left(\frac{\Delta_{h_1} \ldots\Delta_{h_k-1}(P)(x)}{q}\right)}\right)^{1/2^{k-1}}
    \end{equation}

Inductively we obtain
\[
\Delta_{h_1} \ldots\Delta_{h_{k-1}}(P)(x)=x\cdot \ell k!\prod_{i=1}^{k-1}h_i+a
\]
for some $a\in\mathbb{Z}$, so the inner sum in the right hand side of \ref{indstk} is $q$ when
\[
q|\ell k!\prod_{i=1}^{k-1}h_i,
\]
and 0 otherwise. It remains to estimate the number of tuples $(h_i)$ for which the divisibility holds. 

Let $q'=q/\gcd(q, k!\ell)$. For $k=2$ we get precisely $q/q'$ possible $h_1$, and the result follows. We now focus on the case $k>2$.

The divisibility holds precisely when there exist divisors $d_1, \ldots, d_{k-1}$ of $q'$ whose product is $q'$ and such that $d_i|h_i$ for all $i$.

For fixed $d_1, \ldots, d_{k-1}$ the number of such tuples $(h_i)$ is
\[
\prod_{i=1}^{k-1}\frac{q}{d_i}=\frac{q^{k-1}}{q'},
\]
so we can bound 
\[
\left(\frac{1}{q^{k-1}}\sum\limits_{h_1, \ldots, h_{k-1}<q}\abs{\frac{1}{q}\sum\limits_{n<q}e\left(\frac{\Delta_{h_1, \ldots, h_{k-1}}(P)(n)}{q}\right)}\right)^{1/2^{k-1}}\ll \left(\frac{\tau(q)^{k-1}}{q'}\right)^{1/2^{k-1}}\leqslant \left(\frac{k!\gcd(q, \ell)\tau(q)^{k-1}}{q}\right)^{1/2^{k-1}}
\]
which ends the proof, since $\tau(q)=o(q^{\varepsilon})$ for all $\varepsilon>0$ by Theorem \ref{taubound}.

\end{proof}

We now provide a lemma, which will be crucial to the proof of Theorem \ref{strrigid}. The proof of the lemma in the case $C=2$ was shown to us by Maksym Radziwiłł.
\begin{lem}\label{lem}
Fix a polynomial $P$ of degree $C\geqslant 2$ with integer coefficients, and a real number $\delta>0$. Then for numbers $x, a, t\in\mathbb{Z}$ and numbers $q, r, M, N\in\mathbb{Z}^+$ satisfying $\gcd(q, r)=1$ we have
\[
\sum\limits_{\substack{m\equiv a\pmod{r}\\m\in [x, x+M]}}\abs{\{n\in [t, t+N]: P(n)\equiv m\pmod{q}\}}=\left(1+o(1)\right)\frac{MN}{qr}
\]
as long as
\[
q^{1-(1-\delta)/(2^C+1)}=o\left(\frac{M}{r}\right)\quad\text{and}\quad q^{1-(1-\delta)/(2^C+1)}=o(N).
\]
 If $C=2$, the statement holds also with $\delta=0$.

\end{lem}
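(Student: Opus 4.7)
The plan is to apply the Chinese Remainder Theorem and discrete Fourier analysis on $\mathbb{Z}/qr\mathbb{Z}$ in order to rewrite the counting problem as a polynomial exponential sum to which Lemma~\ref{polysum} can be applied. Since $\gcd(q,r)=1$, by CRT the joint conditions $m\equiv a\pmod r$ and $m\equiv P(n)\pmod q$ are equivalent to $m\equiv b_n\pmod{qr}$, where $b_n=A+BP(n)\pmod{qr}$ with $A,B$ explicit integers depending only on $a,q,r$ and satisfying $B\equiv 0\pmod r$, $B\equiv 1\pmod q$ (explicitly $B=r\cdot(r^{-1}\bmod q)$). Writing $f(b)=|\{m\in[x,x+M]:m\equiv b\pmod{qr}\}|$, the left-hand side becomes $S=\sum_{n=t}^{t+N}f(b_n)$, and I expand $f$ via its discrete Fourier transform on $\mathbb{Z}/qr\mathbb{Z}$; this gives $\hat f(0)=(M+1)/(qr)$ and $|\hat f(\ell)|\ll(qr)^{-1}\min(M,qr/\|\ell\|_{qr})$ for $\ell\neq 0$.

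The zero-frequency contribution $\hat f(0)(N+1)$ produces the required main term $(1+o(1))MN/(qr)$, so the task reduces to bounding
\[
\sum_{\ell\neq 0}\hat f(\ell)\,e(\ell A/(qr))\sum_{n=t}^{t+N}e(u_\ell P(n)/q)
\]
by $o(MN/(qr))$, where $u_\ell:=\ell\cdot(r^{-1}\bmod q)\bmod q$. I would split according to whether $q\mid\ell$. When $q\mid\ell$, one has $u_\ell=0$ so the inner sum equals $N+1$; combining this with $|\hat f(qj)|\ll(qr)^{-1}\min(M,r/\|j\|_r)$ and the standard estimate $\sum_j\min(M,r/\|j\|_r)\ll r\log r$ gives a contribution $O(N\log r/q)$, which is $o(MN/(qr))$ because $\log r\ll M/r$ under the hypothesis.

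The main work lies in the case $q\nmid\ell$, where $u_\ell\neq 0$ and the inner sum is a short-interval polynomial exponential sum modulo $q$. I would complete it to a full period by writing the indicator of $[t,t+N]$ as a discrete Fourier sum on $\mathbb{Z}/q\mathbb{Z}$; the resulting complete sums $\sum_{n\bmod q}e((u_\ell P(n)-hn)/q)$ still have degree-$C$ polynomial phases with leading coefficient $u_\ell L$ (where $L$ is the leading coefficient of $P$), and Lemma~\ref{polysum} bounds them by $O\bigl((\gcd(q,u_\ell L)/q^{1-\delta})^{1/2^{C-1}}\cdot q\log q\bigr)$. A crucial algebraic simplification is $\gcd(q,u_\ell L)=\gcd(q,\ell L)$, since $r^{-1}\bmod q$ is coprime to $q$. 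Combining this with the bound on $|\hat f(\ell)|$, a dyadic decomposition of $|\ell|$, and the divisor-sum estimate $\sum_{|\ell|\le A}\gcd(q,\ell)^{1/2^{C-1}}\ll Aq^\varepsilon$ (applied via $\gcd(q,\ell L)\le\gcd(q,\ell)\gcd(q,L)$), the total contribution of this case collapses to $O\bigl(q^{1-(1-\delta)/2^{C-1}+O(\varepsilon)}\gcd(q,L)^{1/2^{C-1}}\bigr)$.

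Verifying that this is $o(MN/(qr))$ reduces, using $MN/(qr)\gg q^{1-2(1-\delta)/(2^C+1)}$ from the hypotheses, to the elementary inequality $(1-\delta)/2^{C-1}>2(1-\delta)/(2^C+1)$, equivalent to $2^C+1>2^C$. This holds strictly for every $C\ge 2$, and indeed with $\delta=0$ in the case $C=2$ (since $1/2>2/5$), leaving enough room to absorb the $O(\varepsilon)$ loss. The main obstacle is that this slack is only of order $1/(2^{C-1}(2^C+1))$, so Lemma~\ref{polysum} is only just strong enough to meet the assumed growth rates on $M/r$ and $N$; each step of the completion argument and of the dyadic decomposition must be handled carefully enough not to lose logarithmic factors that would exceed this slim margin.
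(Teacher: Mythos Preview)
Your strategy is essentially the paper's: express the congruence conditions by characters and feed the resulting polynomial exponential sums into Lemma~\ref{polysum}. The paper carries this out with smooth cutoffs and two applications of Poisson summation (one in $m$, one in $n$), so the dual sums decay rapidly and no logarithms appear; you instead use sharp cutoffs, the discrete Fourier transform on $\mathbb{Z}/qr\mathbb{Z}$, and completion of the $n$-sum. Your route is more elementary and, since the exponent slack $(1-\delta)\bigl(1/2^{C-1}-2/(2^C+1)\bigr)=\tfrac{1-\delta}{2^{C-1}(2^C+1)}>0$ absorbs any $q^{\varepsilon}$ loss, the extra logarithms you anticipate are harmless.

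There is, however, one real oversight in the $q\mid\ell$ case. Your bound $O(N\log r/q)$ is only $o(MN/(qr))$ if $\log r=o(M/r)$, but the hypotheses place no upper bound on $r$ in terms of $q$; with, say, $r\asymp\exp(M/r)$ your estimate fails. The fix is simple: do not split off $\ell=0$, but sum over \emph{all} $\ell$ with $q\mid\ell$ at once. Since $B\equiv 0\pmod r$, the phase $e(\ell b_n/(qr))=e(jb_n/r)$ is independent of $n$ for $\ell=qj$, and orthogonality on $\mathbb{Z}/r\mathbb{Z}$ evaluates this block exactly to
\[
(N{+}1)\cdot\frac{1}{q}\,\bigl|\{m\in[x,x+M]:m\equiv a\pmod r\}\bigr|=\frac{MN}{qr}+O\!\left(\frac{N}{q}+\frac{M}{qr}\right),
\]
which already furnishes the main term with an acceptable error. (In the paper this issue never arises because the rapid decay of $\widehat{\Phi}$ makes the Poisson tail negligible uniformly in $r$.)

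A minor imprecision: after completion, the short $n$-sum is bounded by $(N/q+\log q)$ times the complete-sum bound from Lemma~\ref{polysum}, so the $q\nmid\ell$ contribution is $\ll (N+q\log q)\,q^{-(1-\delta)/2^{C-1}+O(\varepsilon)}$ rather than $q^{1-(1-\delta)/2^{C-1}+O(\varepsilon)}$. When $N\leqslant q$ this matches your expression; for $N>q$ one either splits off full periods in $n$ or notes that the resulting bound $N\,q^{-(1-\delta)/2^{C-1}+O(\varepsilon)}$ is even easier to compare with $MN/(qr)$.
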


\begin{proof}
We wish to compute
$$
\sum_{\substack{m \equiv a \pmod{r} \\ m \in [x, x + M]}} \left ( \sum_{\substack{n\in [t, t+N]\\ P(n) \equiv m \pmod{q}}} 1 \right )
$$
Let us fix an $\varepsilon>0$, and introduce a smooth minorant $\Phi\leqslant 1$ supported in $[\varepsilon, 1-\varepsilon]$ with integral at least $1-\varepsilon$. The above sum is then bounded from below by a smoothed sum of the form
$$
\sum_{\substack{m \equiv a \pmod{r}}} \left ( \sum_{\substack{P(n) \equiv m \pmod{q}}} \Phi \left ( \frac{n-t}{N} \right ) \right ) \Phi \left ( \frac{m - x}{M} \right )
$$

We express the congruence condition using additive characters, thus getting
\begin{equation}\label{twosum}
\frac{1}{q} \sum_{\ell < q} \left ( \sum_{n} \Phi \left ( \frac{n-t}{N} \right ) e \left ( \frac{\ell P(n)}{q} \right ) \right ) \cdot \left ( \sum_{m \equiv a \pmod{r}} \Phi \left ( \frac{m - x}{M} \right ) e \left ( - \frac{\ell m}{q} \right ) \right ).
\end{equation}
We would now like to transform both sums using Poisson summation. For the second sum, we first write
\begin{equation}\label{aritspl}
 \sum_{m \equiv a \pmod{r}} \Phi \left ( \frac{m - x}{M} \right ) e \left ( - \frac{\ell m}{q} \right)=\sum_{\substack{y<qr \\ y \equiv a \pmod{r}}} e \left ( - \frac{\ell y}{q} \right)\sum\limits_{m\equiv y\pmod{qr}}\Phi \left ( \frac{m - x}{M} \right ).
\end{equation}
The inner sum is simply
\[
\sum\limits_{m}\Phi \left ( \frac{qrm+y - x}{M} \right ),
\]
which by Poisson summation equals
\[
\frac{M}{qr}\sum\limits_{v}\widehat{\Phi} \left ( \frac{v M}{q r} \right )e \left ( \frac{v (y-x)}{q r} \right ).
\]
Therefore the whole sum (\ref{aritspl}) equals 

\begin{equation}\label{poisson1}
  \frac{M}{q r} \sum_{v} \left ( \sum_{\substack{y<qr \\ y \equiv a \pmod{r}}} e \left ( - \frac{\ell y}{q} + \frac{v (y-x)}{q r} \right ) \right ) \widehat{\Phi} \left ( \frac{v M}{q r} \right ).
\end{equation}

Since $\gcd(r,q) = 1$ we can write $y \equiv a \overline{q} q + b r \pmod{q r}$ with $b \pmod{q}$, where $\overline{q}$ is any integer such that $\overline{q}q\equiv 1\pmod{r}$. Therefore the sum over $y$ is equal to
$$
\sum_{b <q} e \left ( - \frac{\ell b r}{q} + \frac{av \overline{q}}{r} + \frac{b v}{q} -\frac{vx}{qr} \right ) = e \left ( \frac{av \overline{q}}{r} -\frac{vx}{qr} \right ) q \cdot \mathbf{1}_{v \equiv \ell r\pmod{q}}.
$$
Let $\widetilde{\ell}$ be the unique number in $(-q/2, q/2]$ satisfying $\widetilde{\ell}\equiv \ell r\pmod{q}$. By the above formula, in the sum over $v$ in (\ref{poisson1}) the only $v\in (-q/2, q/2]$ that contributes a nonzero amount is $v=\widetilde{\ell}$. Since $\Phi$ is smooth, $\widehat\Phi(t)=O(|t|^{-4})$ holds, so we have 
\[
\sum\limits_{|v|\geqslant q/2}\abs{\widehat\Phi\left(\frac{vM}{qr}\right)}\ll \sum\limits_{|v|\geqslant q/2}\left(\frac{|v|M}{qr}\right)^{-4}\ll \left(\frac{M}{qr}\right)^{-4}\cdot q^{-3},
\]
which is $o\left(q^{-1}\right)$ since $\sqrt{q}=o(M/r)$. Hence the sum over $m$ in (\ref{twosum}) is
\[
\frac{M}{r} \widehat{\Phi} \left ( \frac{M \widetilde{\ell}}{q r} \right ) e \left ( - \frac{\widetilde{\ell}x}{q r} + \frac{a\widetilde{\ell}\cdot\overline{q}}{r} \right ) + o\left(\frac{M}{qr}\right).
\] 
Notice that
\[
\frac{1}{q} \sum_{\ell<q} \left ( \sum_{n} \Phi \left ( \frac{n-t}{N} \right ) e \left ( \frac{\ell P(n)}{q} \right ) \right )=O(N),
\]
so (\ref{twosum}) is
\[
o\left(\frac{MN}{qr}\right)+\frac{1}{q} \sum_{\ell<q} \left ( \sum_{n} \Phi \left ( \frac{n-t}{N} \right ) e \left ( \frac{\ell P(n)}{q} \right ) \right )\cdot\frac{M}{r} \widehat{\Phi} \left ( \frac{M \widetilde{\ell}}{q r} \right ) e \left ( - \frac{\widetilde{\ell}x}{q r} + \frac{a\widetilde{\ell}\cdot\overline{q}}{r} \right ).
\]
We now turn our attention to the sum over $n$, which we treat it similarly to the sum over $m$. We write
\begin{align*}
 \sum_{n} \Phi \left ( \frac{n}{N} \right ) e \left ( \frac{\ell P(n+t)}{q} \right ) = \sum_{x<q}e \left ( \frac{\ell P(x+t)}{q} \right )\sum_{n\equiv x \pmod{q}}\Phi \left ( \frac{n}{N} \right ) 
\end{align*}
and apply Poisson summation to the inner sum, obtaining 
\[
\sum_{n\equiv x \pmod{q}}\Phi \left ( \frac{n}{N} \right )=\sum_{n}\Phi \left ( \frac{nx+q}{N} \right )=\frac{N}{q} \sum_{v} \widehat{\Phi} \left ( \frac{N v}{q} \right )e \left (  \frac{v x}{q} \right )  .
\]
Hence the sum over $n$ in (\ref{twosum}) is 
\[
\frac{N}{q} \sum_{v} \left ( \sum_{x<q} e \left ( \frac{\ell P(x+t)}{q} + \frac{v x}{q} \right ) \right ) \widehat{\Phi} \left ( \frac{N v}{q} \right ).
\]

We write
$$
\mathcal{G}(\ell, v, t; q) := \sum_{x<q} e \left ( \frac{\ell P(x+t)}{q} + \frac{v x}{q} \right ).
$$
Plugging everything together we see that (\ref{twosum}) is
\begin{equation}\label{finsum}
\frac{M N}{q^{2} r} \sum_{\ell<q}\sum_{ v}\widehat{\Phi} \left ( \frac{M \widetilde{\ell}}{q r} \right ) e \left ( - \frac{\widetilde{\ell}x}{q r} + \frac{a\widetilde{\ell}\cdot\overline{q}}{r}\right)\mathcal{G}(\ell, v, t; q) \widehat{\Phi} \left ( \frac{N v}{q} \right ) + o\left(\frac{MN}{qr}\right).
\end{equation}
We now notice that the term $\ell = 0 = v$ contributes
$$
\frac{M N}{q r} \widehat{\Phi}(0)^{2},
$$
which is the expected main term. 
 Since $\mathcal{G}(0, v, t; q) = q \mathbf{1}_{q | v}$, the terms with $\ell = 0, v \neq 0$ contribute at most 
 \[
\frac{MN}{q^2r}\widehat{\Phi}(0)\sum_{i\ne 0}q\abs{\widehat{\Phi}(iN)}=O\left(\frac{MN}{qr}\cdot N^{-1}\right),
 \]
 which is a negligible amount.

To prove that (\ref{finsum}) is $o(MN/qr)$ we just need to show that
\[
\sum_{\ell=1}^{q}\sum\limits_{v}\widehat{\Phi} \left ( \frac{M \widetilde{\ell}}{q r} \right ) e \left ( - \frac{\widetilde{\ell}x}{q r} + \frac{a\widetilde{\ell}\cdot\overline{q}}{r}\right)\mathcal{G}(\ell, v, t; q) \widehat{\Phi} \left ( \frac{N v}{q} \right )=o(q).
\]
By Lemma \ref{polysum} we have
\[
\abs{\mathcal{G}(\ell, v, t; q)}\ll q\left(\frac{\gcd(q, \ell)}{q^{1-\varepsilon}}\right)^{1/2^{C-1}}\leqslant q\left(\frac{|\widetilde{\ell}|}{q^{1-\delta}}\right)^{1/2^{C-1}}
\] 
where the last inequality follows from $\gcd(q, \ell)=\gcd(q, \widetilde{\ell})$. The function taking $\ell$ to $\widetilde{\ell}$ is a bijection between $[0, q)$ and $(-q/2, q/2]$ taking 0 to 0, and hence it is enough to show that
\begin{equation}\label{after}
\sum_{\ell\neq 0}\sum\limits_{v} \abs{\widehat{\Phi}\left( \frac{M\ell}{q r}\right) \widehat{\Phi}\left ( \frac{N v}{q} \right )\abs{\ell}^{1/2^{C-1}}} =o\left(q^{(1-\delta)/2^{C-1}}\right).
\end{equation}
But
\[
\sum_{\ell\neq 0} \abs{\widehat{\Phi}\left( \frac{M\ell}{q r}\right) \abs{\ell}^{1/2^{C-1}}}\ll \left(\frac{qr}{M}\right)^{1+1/2^{C-1}}+\sum\limits_{|\ell|>qr/M}\left(\frac{M|\ell|}{qr}\right)^{-2}\abs{\ell}^{1/2^{C-1}}\ll \left(\frac{qr}{M}\right)^{1+1/2^{C-1}}
\]
and similarly
\[
\sum_{v}\abs{\widehat{\Phi}\left(\frac{Nv}{q}\right)}\ll\frac{q}{N}.
\]
Our assumptions on the growth rate of $M/r$ and $N$ in terms of $q$ give
\[
\left(\frac{qr}{M}\right)^{1+1/2^{C-1}}\cdot \frac{q}{N}=o\left(q^{(1-\delta)/2^{C-1}}\right),
\]
so the desired estimate (\ref{after}) holds.

From this reasoning we conclude that the sum in the statement of the Lemma is at least 
\[
(1-\varepsilon+o(1))\frac{MN}{qr}.
\]
Considering a majorant of $\mathbf{1}_{[0, 1]}$ instead of a minorant gives an analogous upper bound, and since $\varepsilon$ was arbitrary, we get the claimed estimate.

If $C=2$, the same reasoning applies for $\delta=0$.

\end{proof}

For numbers $m, q\in\mathbb{N}$ and an interval $I$ we denote
\[
\Sq(q, I, m)=\abs{\{i\in I: i^2\equiv m\pmod{q}\}}.
\]
Notice, that if $m\equiv m'\pmod{q}$ we have
\[
\Sq(q, I, m)=\Sq(q, I, m').
\]

We are now ready to prove the main result.
\begin{proof}[Proof of Theorem \ref{strrigid}]
 We first show that we can strengthen (\ref{rigid}) to
\begin{equation}\label{rigibet}
\max\limits_{t< q_{n+1}^{4/5}\cdot C^2_{n+1}}\sup_{x\in X}d(x, T^{tq_n}x)=o(1).
\end{equation}
for some nondecreasing sequence $(C_n)$ approaching infinity. Indeed, let us denote
\[
d^{(n)}_k=\sup_{x\in X}d(x, T^{kq_n}x),
\]
and let
\[
B_n=\left(\max_{t<q_{n}^{4/5}}d^{(n-1)}_t\right)^{-1/3},
\]
which approaches infinity by our assumption. For any $k, l\in\mathbb{N}$ we have
\[
d(x, T^{(k+l)q_n}x)\leqslant d(x, T^{kq_n}x)+d(T^{kq_n}x, T^{(k+l)q_n}x)\leqslant d^{(n)}_k+d^{(n)}_l,
\]
so $d^{(n)}_{k+l}\leqslant d^{(n)}_k+d^{(n)}_l$.
For large $n$ any $t<q_{n+1}^{4/5}\cdot B_{n+1}^2$ can be written as a sum of at most $2B_{n+1}^2$ numbers $t_i<q_{n+1}^{4/5}$, and then
\[
d^{(n)}_{t}\leqslant d^{(n)}_{t_1}+d^{(n)}_{t_2}+\ldots \leqslant 2B_{n+1}^2\cdot B_{n+1}^{-3}=o(1),
\]
so the sequence $(B_n)$ satisfies (\ref{rigibet}). The sequence
\[
C_n=\min_{k\geqslant n} B_k
\]
also satisfies (\ref{rigibet}) and approaches infinity, and it is nondecreasing.

By adding a constant to $f$ we can assume that 
\[
\int_{X}f d\mu=0.
\]
Throughout the proof $x\in X$ will be fixed, and we will denote $g(i)=f(T^{i}x)$. Since $f$ is uniformly continuous, (\ref{rigibet}) implies

\begin{equation}\label{rigid2}
\max\limits_{t< q_{n+1}^{4/5}\cdot C^2_{n+1}}\sup_{i\in\mathbb{Z}}\abs{g(i)-g(i+tq_n)}=o(1).
\end{equation}

We will show that
\begin{equation}\label{key}
\max\limits_{I\in\mathcal{I}_n}\frac{1}{|I|}\sum\limits_{i\in I} g(i^2)=o(1),
\end{equation}
where $\mathcal{I}_n$ consists of all intervals $I$ with integer endpoints satisfying
\[
I\subset[0, q_{n+1}^{4/5}C_{n+1}]\quad\text{and}\quad q_n^{4/5}C_n\leqslant |I|\leqslant 2q_n^{4/5}C_n.
\]
Notice, that every interval of the form $[0, N-1]$ for $q_n^{4/5}C_n\leqslant N\leqslant q_{n+1}^{4/5}C_{n+1}$ can be partitioned into intervals satisfying the above condition, and applying (\ref{key}) to each interval and averaging we obtain
\[
\max\left\{\frac{1}{N}\sum\limits_{i<N}g(i^2): \,\, q_n^{4/5}C_n\leqslant N\leqslant q_{n+1}^{4/5}C_{n+1}\right\}=o(1),
\]
which implies the statement of the theorem. Therefore we just need to prove (\ref{key}).

Let $I=[A, B]\in \mathcal{I}_n$, and let $r$ be the largest multiple of $q_n$ smaller than $A^2$, so that $r\geqslant A^2-q_n$. For the purposes of this proof we will denote by $\overline{k}$ the residue of $k$ modulo $q_n$ for integer $k$. For $i\in I$ we have
\begin{equation}\label{grwrt}
i^2-(r+\overline{i^2})\leqslant B^2-(A^2-q_n)=(B+A)(B-A)+q_n\leqslant  2q_{n+1}^{4/5}C_{n+1}\cdot |I|+q_n<q_nq_{n+1}^{4/5}C_{n+1}^2
\end{equation}
for large enough $n$.
Therefore, since $q_n| i^2-r-\overline{i^2}$, by (\ref{rigid2}) we have 
\[
\sup_{i\in I}\abs{g(i^2)-g(r+\overline{i^2})}=o(1),
\]
so 
\[
\sum\limits_{i\in I}g(i^2)=o(|I|)+\sum\limits_{y<q_n}\Sq(q_n, I, y)\cdot g(r+y).
\]
Using (\ref{rigid2}) for the index $n-1$ we get that this is
\[
o(|I|)+\left\lfloor q_n^{4/5}C_n\right\rfloor^{-1}\cdot \sum\limits_{y<q_n}\sum\limits_{t<q_n^{4/5}C_n}\Sq(q_n, I, y)\cdot g(r+y+tq_{n-1}).
\]
The change of variables $z=y+tq_{n-1}$ gives
\[
\sum\limits_{t<q_n^{4/5}C_n}\sum\limits_{y=0}^{q_n-1}\Sq(q_n, I, y)\cdot g(r+y+tq_{n-1})=\sum\limits_{t<q_n^{4/5}C_n}\sum\limits_{z=tq_{n-1}}^{q_n+tq_{n-1}-1}\Sq(q_n, I, z-tq_{n-1})\cdot g(r+z).
\]
This sum differs from
\begin{equation}\label{varch}
\sum\limits_{t<q_n^{4/5}C_n}\sum\limits_{z=tq_{n-1}}^{q_n+tq_{n-1}-1}\Sq(q_n, I, \overline{z}-tq_{n-1})\cdot g(r+\overline{z})=\sum\limits_{z<q_n}g(r+z)\sum\limits_{t<q_n^{4/5}C_n}\Sq(q_n, I, z-tq_{n-1})
\end{equation}
by
\begin{equation}\label{difm}
\sum\limits_{t<q_n^{4/5}C_n}\sum\limits_{z=tq_{n-1}}^{q_n+tq_{n-1}-1}\Sq(q_n, I, z-tq_{n-1})\left(g(r+z)-g(r+\overline{z})\right).
\end{equation}
But for these $z$ we have
\[
z<q_n+q_n^{4/5}C_nq_{n-1}<q_nq_{n+1}^{4/5}C_{n+1}^2
\]
for large $n$, and so by (\ref{rigid2}) we have
\[
\abs{g(r+z)-g(r+\overline{z})}=o(1).
\]
Therefore, and since
\[
\sum\limits_{z=tq_{n-1}}^{q_n+tq_{n-1}-1}\Sq(q_n, I, z-tq_{n-1})=|I|,
\]
the sum (\ref{difm}) is $o(|I|q_n^{4/5}C_n)$, and so it suffices to show that (\ref{varch}) is $o(|I|q_n^{4/5}C_n)$.

By Lemma \ref{lem} we have 
\[
\sum\limits_{\substack{m\equiv z\pmod{q_{n-1}}\\m\in [z-q_{n-1}q_n^{4/5}C_n+1, z]}}\Sq(q_n, I, m)=\left(1+o(1)\right)\frac{q_{n-1}q_n^{4/5}C_n\cdot |I|}{q_nq_{n-1}},
\]
since $\gcd(q_n, q_{n-1})=1$ by our assumptions, and $|I|\geqslant q_n^{4/5}C_n$, which grows faster than $q_n^{4/5}.$ Hence (\ref{varch}) is
\[
\left(\sum_{z<q_n}g(r+z)\cdot \frac{|I| q_n^{4/5}C_n}{q_n}\right)+\sum\limits_{z<q_n}\abs{g(r+z)}\cdot o\left(\frac{|I| q_n^{4/5}C_n}{q_n}\right).
\]
Both summands are $o\left(|I| q_{n}^{4/5}C_n\right)$: the first one by unique ergodicity of $(X, T)$, and the second by boundedness of $g$.

Summarizing all the estimates, we see that 
\[
\sum\limits_{i\in I}g(i^2)=o(|I|).
\]
All the asymptotic estimates we used were uniform in the choice of $I\in\mathcal{I}_n$, so this implies (\ref{key}) and ends the proof.
\end{proof}

\subsection{Examples}\label{examples}
 In this section we provide examples of systems satisfying the assumptions of Theorem \ref{strrigid} in the form of skew products and special flows over rotations.

We will now show a lemma providing conditions on the Fourier expansion of $g$ that imply unique ergodicity of the relevant systems, and give us control over the sums
\[
S_n(g)(x)=\sum\limits_{i=0}^{n-1}g(x+i\alpha)
\]
appearing naturally in both of our classes of examples. 

\begin{lem}\label{gcond}
Let $\alpha\in\mathbb{R}$ be an irrational number, and let $(q_n)$ be the sequence of denominators of convergents of $\alpha$. Let a sequence $(a_n)$ of real numbers satisfy 
\[
|a_n|=o\left(\frac{1}{q_nq_{n+1}^{4/5}}\right)
\] 
and assume that
\begin{equation}\label{acond}
\abs{a_n}\geqslant\frac{1}{{q_n^{4/5}}q_{n+1}}
\end{equation}
holds for infinitely many $n$. Then the function
\[
g(x)=\sum\limits_{k=1}^{\infty}a_k \cos(2\pi q_k x).
\]
is continuous, and we have
\begin{equation}\label{rig}
\sup_{x\in \mathbb{T}}\abs{S_{q_n}(g)(x)}=o\left(q_{n+1}^{-4/5}\right).
\end{equation}
Also, for $r\in\mathbb{R}\setminus\{0\}$ and $\beta\in\mathbb{R}$ the equation
\begin{equation}\label{meassol}
rg(x)=\psi(x+\alpha)-\psi(x)+\beta\pmod{1}
\end{equation}
has no measurable solutions $\psi: \mathbb{T}\to\mathbb{R}$.
\end{lem}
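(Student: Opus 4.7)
The plan is to verify each of the three assertions in turn, writing $D_N(\theta):=(e(N\theta)-1)/(e(\theta)-1)$ for the Dirichlet-type kernel.

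\emph{Continuity of $g$.} Since $q_k\geq q_{k-1}+q_{k-2}\geq 2q_{k-2}$ grows at least geometrically, the assumption $|a_k|=o(1/(q_kq_{k+1}^{4/5}))$ gives $\sum_k|a_k|<\infty$, and $g$ is the uniform limit of continuous trigonometric polynomials, hence continuous.

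\emph{The Birkhoff sum estimate (\ref{rig}).} The plan is to expand
\[
S_{q_n}(g)(x)=\sum_ka_k\,\mathrm{Re}\!\left[e(q_kx)\,D_{q_n}(q_k\alpha)\right]
\]
and split over the ranges $k<n$, $k=n$, $k>n$. For $k<n$ the continued-fraction bounds $\|q_k\alpha\|\geq 1/(2q_{k+1})$ and $\|q_k\cdot q_n\alpha\|\leq q_k\|q_n\alpha\|\leq q_k/q_{n+1}$ yield $|D_{q_n}(q_k\alpha)|\leq\pi q_kq_{k+1}/q_{n+1}$. Writing $|a_k|q_kq_{k+1}=\varepsilon_kq_{k+1}^{1/5}$ with $\varepsilon_k\to 0$, a tail-splitting at $k=n-K$ (then letting $K\to\infty$ slowly) using the geometric growth of $(q_k)$ gives $\sum_{k<n}|a_k|q_kq_{k+1}=o(q_{n+1}^{1/5})$, so the $k<n$ contribution is $o(q_{n+1}^{-4/5})$. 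For $k=n$ the trivial bound $|D_{q_n}|\leq q_n$ gives $|a_n|\pi q_n=o(q_{n+1}^{-4/5})$; for $k>n$ the same trivial bound together with the rapid decay of $|a_k|$ and the geometric growth of $q_k$ yields contribution $o(q_{n+1}^{-4/5})$.

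\emph{Nonexistence of measurable solutions to (\ref{meassol}).} I argue by contradiction: suppose $\psi$ is measurable and solves the equation, and put $F:=e(\psi)\in L^\infty(\mathbb{T})$ with $|F|\equiv 1$, so $F\in L^2$ and $F(x+\alpha)=e(rg(x)-\beta)F(x)$; iterating gives $F(x+q_n\alpha)=e(rS_{q_n}(g)(x)-q_n\beta)F(x)$. By (\ref{rig}) the exponential on the right tends uniformly to $1$, while $F(\cdot+q_n\alpha)\to F$ in $L^2$ by Fact \ref{conttrans} (since $\|q_n\alpha\|\to 0$); comparing in $L^2$ forces $e(q_n\beta)\to 1$, so up to a subsequence $q_n\beta\to 0\pmod 1$. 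Writing $F=\sum_kc_ke(kx)$ with $\sum|c_k|^2=1$, the identity $F(\cdot+\alpha)\overline F=e(rg-\beta)$ reads $\widehat{e(rg-\beta)}(q_m)=\sum_lc_l\overline{c_{l-q_m}}e(l\alpha)$, while a Jacobi--Anger expansion of $e(rg)=\prod_ke(ra_k\cos(2\pi q_kx))$ gives the leading order $\widehat{e(rg-\beta)}(q_m)\approx i\pi r a_me(-\beta)$. Along the infinite set of indices $m$ where $|a_m|\geq 1/(q_m^{4/5}q_{m+1})$, the right-hand side is then bounded below in magnitude; combined with a careful bookkeeping of cross-term corrections and iteration of the cocycle equation, this produces constraints on the Fourier coefficients $c_k$ incompatible with the normalization $\sum|c_k|^2=1$.

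The chief obstacle will be in the last step: controlling the cross-term contributions in the Jacobi--Anger expansion and rigorously turning the infinitely-often lower bound on $|a_m|$ into a contradiction with the $L^2$ normalization of $F$ requires delicate bookkeeping, since the perturbative expansion of $e(rg)$ generates correlated corrections at many nearby Fourier frequencies, and one must also handle the case $r\in\mathbb{R}\setminus\mathbb{Q}$ where no simple reduction to an integer coefficient is available.
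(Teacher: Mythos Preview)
Your arguments for continuity and for the Birkhoff-sum estimate \eqref{rig} match the paper's proof essentially verbatim: the same three-range split $k<n$, $k=n$, $k>n$, the same continued-fraction bounds on $\|q_k\alpha\|$, and the same geometric tail estimate.

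The gap is in the non-coboundary part. Your plan is to study the Fourier coefficient of $e(rg-\beta)=F(\cdot+\alpha)\overline F$ at frequency $q_m$, observe via Jacobi--Anger that it is $\approx i\pi r a_m e(-\beta)$, and extract from the identity
\[
\widehat{e(rg-\beta)}(q_m)=\sum_l c_l\,\overline{c_{l-q_m}}\,e(l\alpha)
\]
some constraint contradicting $\sum|c_l|^2=1$. But along the infinite set of indices where \eqref{acond} holds, one still has $|a_m|\to 0$, so both sides of this identity tend to $0$; there is no tension whatsoever between a Fourier coefficient of size $\asymp |a_m|$ and the $\ell^2$ normalisation of $(c_l)$. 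The ``delicate bookkeeping'' you flag is not a technicality but the entire missing idea: nothing in your outline produces a quantity that is simultaneously forced to be small (by the coboundary hypothesis) and bounded away from zero (by \eqref{acond}).

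The paper supplies exactly that missing amplification. Instead of iterating $q_n$ times, it iterates $t_nq_n$ times with
\[
t_n=\Big\lfloor \tfrac{1}{100\,|r a_n|\,q_n}\Big\rfloor,
\]
chosen so that the $n$-th harmonic in $S_{t_nq_n}(g)$ is boosted by a factor $\sin(\pi t_nq_n^2\alpha)/\sin(\pi q_n\alpha)\asymp t_nq_n\asymp 1/|ra_n|$, while all other harmonics remain $o(1)$ by the same estimates you already proved for \eqref{rig}. The upshot is that $rS_{t_nq_n}(g)(x)$ equals $b_n\cos(2\pi q_n x+\text{phase})+o(1)$ with $|b_n|\in[1/1000,1/10]$, a cosine of amplitude bounded away from zero. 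On the other hand, since $t_n=o(q_{n+1})$ one has $\|t_nq_n\alpha\|\to 0$, so $\psi(\cdot+t_nq_n\alpha)-\psi(\cdot)\to 0$ in $L^1$, forcing $\|rS_{t_nq_n}(g)(x)-t_nq_n\beta\|$ to be small in $L^1$. A cosine wave of fixed nonzero amplitude cannot be $L^1$-close to any constant modulo $1$, and this is the contradiction. The point you are missing is that one must iterate a \emph{carefully tuned} number of times so that the small coefficient $a_n$ gets amplified to order one; iterating only $q_n$ times, or passing to Fourier coefficients of $e(rg)$ directly, leaves everything at the scale of $a_n$ and no contradiction emerges.
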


\begin{proof}
    We clearly have
    \[
    \sum\limits_{k=1}^{\infty}q_k|a_k|<\infty,
    \]
    so $g$ is continuous.

   We prove  (\ref{rig}) for the function 
    \[
    \tilde{g}(x)=\sum\limits_{k=1}^{\infty}a_ke(q_k x),
    \]
    whose real part is $g$, so that (\ref{rig}) for $g$ follows. Let
    \[
    \varepsilon_n=\sup\limits_{k\geqslant n}\abs{a_kq_kq_{k+1}^{4/5}},
    \]
    which converges to 0. We now bound the terms $k<n$, $k=n$ and $k> n$ separately.

    If $k< n$, we have
    \[
    S_{q_n}(a_ke(q_k x))=a_ke(q_k x)\frac{e(q_nq_k\alpha)-1}{e(q_k\alpha)-1}\ll |a_k|\frac{q_k\norm{q_n\alpha}}{\norm{q_k\alpha}}\ll |a_k|\frac{q_kq_{k+1}}{q_{n+1}}\leqslant\varepsilon_k\frac{q_{k+1}^{1/5}}{q_{n+1}}.
    \]
    One can easily see that $q_{k+2}\geqslant 2q_k$ for all $k$, so
\[
    \sum\limits_{k<n}S_{q_n}(a_ke(q_k x))\ll \frac{q_n^{1/5}}{q_{n+1}} \sum\limits_{k<n} \varepsilon_k\cdot 2^{-\frac{n-k}{10}}=o\left(\frac{q_n^{1/5}}{q_{n+1}}\right)
    \]
    
    For $k=n$ we have
    \[
    S_{q_n}(a_ne(q_nx))\ll |a_n|q_n=o(q_{n+1}^{-4/5}).
    \]
    Finally, for the remaining case $k>n$ we simply bound
    \[
    \sum\limits_{k>n}S_{q_n}(a_ke(q_k x))\leqslant q_n\sum\limits_{k>n}|a_k|\leqslant \varepsilon_n q_n\sum\limits_{k>n}\frac{1}{q_kq_{k+1}^{4/5}},
    \]
 which again by $q_{k+2}\geqslant 2q_k$ is as most
    \[
    \frac{\varepsilon_nq_n}{q_{n+1}q_{n+2}^{4/5}}\cdot 2\sum\limits_{k>n}2^{n+1-k}=o\left(\frac{q_n^{1/5}}{q_{n+1}}\right).
    \]
    
    Summing up the results in these three cases we see that the condition (\ref{rig}) holds.

     Now assume that the condition (\ref{meassol}) doesn't hold. Denote by $A\subset \mathbb{N}$ the set of indices satisfying (\ref{acond}), and consider $n\in A$. Let
    \[
    t_n=\left\lfloor \frac{1}{100\abs{ra_n}q_n}\right\rfloor\leqslant \frac{q_{n+1}}{100q_n^{1/5}}=o(q_{n+1}).
    \]
    Then we have $\norm{t_nq_n\alpha}=o(1)$, so by Fact \ref{conttrans}
    \[
    \lim_{\substack{n\to\infty\\n\in A}}\int_{\mathbb{T}}\abs{\psi(x+t_nq_n\alpha)-\psi(x)}dx=0.
    \]
    Iterating (\ref{meassol}) we get
    \[
     rS_{t_nq_n}(g)(x)\equiv \psi(x+t_nq_\alpha)-\psi(x)+t_nq_n\beta\pmod{1}\quad
    \]
    so
    \begin{equation}\label{int}
    \lim\limits_{\substack{n\to\infty\\n\in A}}\int_{\mathbb{T}}\norm{rS_{t_nq_n}(g)(x)-t_nq_n\beta}dx=0.
    \end{equation}
   
    We have already established that
    \[
    \sum\limits_{k\ne n}S_{q_n}(a_ke(q_k x))=o\left(\frac{q_n^{1/5}}{q_{n+1}}\right)
    \]
    uniformly in $x\in\mathbb{T}$, so since $t_n=O(q_{n+1}/q_n^{1/5})$ we have
    \[
   \sum\limits_{k\ne n}S_{t_nq_n}(a_ke(q_k x)) =\sum\limits_{k\ne n}\sum\limits_{i<t_n}S_{q_n}\left(a_ke\left(q_k\left(x+iq_n\alpha\right)\right)\right)=o(1),
    \]
    and so
    \[
    S_{t_nq_n}(g)(x)=S_{t_nq_n}\Big(a_n\cos(2\pi q_nx)\Big)+o(1)=a_n\frac{\sin(\pi t_nq_n^2\alpha)}{\sin(\pi q_n\alpha)}\cos\Big(2\pi q_nx+\pi\alpha q_n(t_nq_n-1)\Big)+o(1).
    \]
    If we denote 
   \[
   b_n=ra_n\frac{\sin(\pi t_nq_n^2\alpha)}{\sin(\pi q_n\alpha)},
   \]
   then we get
   \begin{equation}\label{equality}
   \int_{\mathbb{T}}\norm{rS_{t_nq_n}(g)(x)-t_nq_n\beta}dx=o(1)+\int_{\mathbb{T}}\norm{b_n\cos(2\pi q_nx)-t_nq_n\beta}dx.
   \end{equation}
   But 
    \[
   \abs{b_n}=\abs{ra_n\frac{\sin(\pi t_nq_n^2\alpha)}{\sin(\pi q_n\alpha)}}=\abs{ra_n\frac{\sin\left(\pi t_nq_n\norm{q_n\alpha}\right)}{\sin(\pi \norm{q_n\alpha})}}\in \left[\frac{1}{1000}, \frac{1}{10}\right],
    \]
    so we just need to formalize the intuition that a cosine wave is far away from any constant in the $L^1$ norm. We can for example use the inequality
    \begin{gather*}
    \norm{b_n\cos(2\pi q_nx)-t_nq_n\beta}+\norm{b_n\cos(2\pi q_nx+\pi)-t_nq_n\beta}\geqslant \\ \geqslant\norm{b_n\cos(2\pi q_nx)-b_n\cos(2\pi q_nx+\pi)}=2\abs{b_n \cos(2\pi q_n x)}
    \end{gather*}
    for large enough $n\in A$, and so
    \begin{gather*}
        \int_{\mathbb{T}}\norm{b_n\cos(2\pi q_n x)-t_nq_n\beta}dx=\frac{1}{2}\int_{\mathbb{T}}\Big(\norm{b_n\cos(2\pi q_n x)-t_nq_n\beta}+\norm{b_n\cos(2\pi q_n x+\pi)-t_nq_n\beta}\Big)dx\geqslant \\ \geqslant \int_{\mathbb{T}} \abs{b_n\cos(2\pi q_n x)}=\frac{2}{\pi}b_n\geqslant \frac{1}{500\pi},
    \end{gather*}
    which together with (\ref{equality}) contradicts (\ref{int}).
\end{proof}

\begin{rem}
    The statement of the lemma holds even after perturbing $g$ by a function with quickly decreasing Fourier coefficients. In particular, if $g$ is as in the lemma, and 
    \[
    h(x)=\sum\limits_{k=1}^{\infty}b_k\cos(2\pi k x),
    \]
    where $(b_k)$ satisfies
    \[
    \sum\limits_{k<q_n}\frac{k|b_k|}{\norm{k\alpha}}+q_nq_{n+1}\sum\limits_{k\geqslant q_n}|b_k|=o\left(q_n^{1/5}\right),
    \]
    then the statement of the lemma holds for $g+h$. Indeed, the same calculations as in the proof give
    \[
    \sup_{x\in\mathbb{T}}\left(S_{q_n}(h)(x)\right)=o\left(\frac{q_n^{1/5}}{q_{n+1}}\right),
    \]
    so (\ref{rig}) is satisfied for $g+h$, and
    \[
    \sup_{x\in\mathbb{T}}\abs{rS_{t_nq_n}(h)(x)}=o(1)
    \]
    implying
    \[
    \int_{\mathbb{T}}\norm{rS_{t_nq_n}(g+h)(x)-t_nq_n\beta}dx=o(1)+\int_{\mathbb{T}}\norm{rS_{t_nq_n}(g)(x)-t_nq_n\beta}dx\geqslant o(1)+\frac{1}{500\pi}
    \]
    so (\ref{meassol}) has no solutions for $g+h$.
\end{rem}

\begin{rem}
    Sequences $(a_n)$ satisfying the required conditions exist exactly for those $\alpha$, for which 
    \[
    \frac{1}{{q_{k_n}^{4/5}}q_{k_n+1}}=o\left(\frac{1}{q_{k_n}q_{k_n+1}^{4/5}}\right)
    \]
    holds for some sequence $(k_n)$, which is equivalent to
    \begin{equation}\label{fracexp}
    \liminf\limits_{n\to\infty}\frac{q_n}{q_{n+1}}=0.
    \end{equation}
  This fails only when the continued fraction expansion of $\alpha$ has bounded coefficients, i.e. when $\alpha$ is a badly approximable number. Thus the set of $\alpha$ for which such sequences $(a_n)$ exist has in particular full Lebesgue measure.
   \end{rem}

Now we are finally in a position to present the relevant examples:

\begin{thm}\label{skew}
    If $\alpha$ is an irrational number and $g$ is a function satisfying the statement of Lemma \ref{gcond}, then each square orbit of $T_{\alpha, g}$ in $\mathbb{T}^2$ is equidistributed.
\end{thm}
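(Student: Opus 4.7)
The plan is to reduce immediately to Theorem \ref{strrigid} applied to $(\mathbb{T}^2, T_{\alpha,g})$ with $(q_n)$ chosen to be the sequence of denominators of the continued fraction convergents of $\alpha$. Equidistribution of every square orbit is equivalent, via the standard Weyl criterion combined with the Stone--Weierstrass theorem, to uniform convergence along $(n^2)$ of Birkhoff averages of continuous functions to their integral, which is exactly what Theorem \ref{strrigid} delivers.

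First I would establish unique ergodicity via Fact \ref{meascond2}: the last clause of Lemma \ref{gcond}, applied with $\beta=0$ and $r$ ranging over $\mathbb{Z}\setminus\{0\} \subset \mathbb{R}\setminus\{0\}$, shows that $rg(x) \equiv \psi(x+\alpha)-\psi(x) \pmod{1}$ has no measurable solution, so $T_{\alpha,g}$ is uniquely ergodic with $\mu$ the Lebesgue measure on $\mathbb{T}^2$. The coprimality $\gcd(q_n, q_{n+1})=1$ required by Theorem \ref{strrigid} is immediate from the standard continued fraction recursion $q_{n+1}=a_{n+1}q_n + q_{n-1}$.

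The only real computation is to verify the rigidity bound (\ref{rigid}). Since $T_{\alpha,g}^k(x,y) = (x+k\alpha,\, y+S_k(g)(x))$, the distance from $(x,y)$ to its $tq_n$-th iterate in the product metric on $\mathbb{T}^2$ is controlled by $\norm{tq_n\alpha} + \norm{S_{tq_n}(g)(x)}$. For the first summand the classical bound $\norm{q_n\alpha} < 1/q_{n+1}$ gives $\norm{tq_n\alpha}\leqslant t\norm{q_n\alpha} < q_{n+1}^{-1/5}$ whenever $t < q_{n+1}^{4/5}$. For the second summand I will use the telescoping decomposition
\[
S_{tq_n}(g)(x) = \sum_{i=0}^{t-1} S_{q_n}(g)(x+iq_n\alpha),
\]
so that $\abs{S_{tq_n}(g)(x)} \leqslant t\sup_{y\in\mathbb{T}}\abs{S_{q_n}(g)(y)}$. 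The estimate (\ref{rig}) in Lemma \ref{gcond} asserts $\sup_y\abs{S_{q_n}(g)(y)} = o(q_{n+1}^{-4/5})$, so for $t<q_{n+1}^{4/5}$ the bound becomes $q_{n+1}^{4/5}\cdot o(q_{n+1}^{-4/5}) = o(1)$ uniformly in $x$. Both estimates are uniform in $(x,y)$ and in $t < q_{n+1}^{4/5}$, which gives (\ref{rigid}), and Theorem \ref{strrigid} applies.

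I expect no substantive obstacle: all the hard work has been packaged into Lemma \ref{gcond}, whose Fourier construction was engineered precisely so that both the cohomological non-solvability (which yields unique ergodicity via Fact \ref{meascond2}) and the rigidity estimate with the exponent $4/5$ demanded by Theorem \ref{strrigid} fall out at once. The proof is thus a short verification of hypotheses followed by invocation of Theorem \ref{strrigid}.
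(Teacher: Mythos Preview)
Your proposal is correct and follows essentially the same route as the paper: verify the hypotheses of Theorem \ref{strrigid} for the continued-fraction denominators $(q_n)$, invoking Lemma \ref{gcond} for the rigidity estimate and Fact \ref{meascond2} for unique ergodicity. The paper handles the rigidity bound slightly more tersely---it observes $d((x,y),T^{q_n}(x,y))=o(q_{n+1}^{-4/5})$ and then cites the triangle inequality in the metric space $\mathbb{T}^2$ to pass to $t<q_{n+1}^{4/5}$ iterates---which is exactly your telescoping argument in condensed form.
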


\begin{proof}
    We check the assumptions of Theorem \ref{strrigid} for the sequence $(q_n)$ of denominators of $\alpha$. Clearly each two consecutive terms of $(q_n)$ are coprime.

    We have
\[
T^{q_n}(x, y)=\left(x+q_n\alpha, y+S_{q_n}(g)(x)\right),
\]
and the distance from this point to $(x, y)$ is at most
\[
\norm{q_n\alpha}+\abs{S_{q_n}(g)(x)}=o(q_{n+1}^{-4/5})
\]
by Lemma \ref{gcond}. By the triangle inequality we get the desired rigidity condition. Finally, by Fact \ref{meascond2} we have unique ergodicity.
\end{proof}

\begin{thm}\label{spec}
     Let $\alpha$ be an irrational number and $g$ a
    function satisfying the statement of Lemma \ref{gcond}. Let \[C>-\min_{x\in\mathbb{T}}g(x),\] and consider the special flow $(T_t)$ on $\mathbb{T}$ under $g+C$. Then the dynamical system $(X_{g+C}, T_C)$ is weakly mixing and each square orbit is equidistributed.
\end{thm}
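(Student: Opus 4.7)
The plan is to verify the hypotheses of Fact~\ref{meascond} and Theorem~\ref{strrigid} for $(X_{g+C}, T_C)$, using as the rigidity sequence the denominators $(q_n)$ of the continued fraction convergents of $\alpha$.

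First, I would deduce weak mixing and unique ergodicity of $(X_{g+C}, T_C)$ from Fact~\ref{meascond}: any measurable solution to $r(g+C)(x)\equiv\psi(x+\alpha)-\psi(x)\pmod{1}$ with $r\in\mathbb{R}\setminus\{0\}$ would give $rg(x)\equiv\psi(x+\alpha)-\psi(x)-rC\pmod{1}$, contradicting Lemma~\ref{gcond} with $\beta=-rC$. Also, $(q_n)$ is increasing with $\gcd(q_n,q_{n+1})=1$, as is standard for continued fraction denominators.

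Next, the core step: verifying the rigidity condition \eqref{rigid}. Working in the fundamental domain, $T_C^{tq_n}(x,y)=T_{Ctq_n}(x,y)=(x+a\alpha,\,y+Ctq_n-S_a(g+C)(x))$, where $a$ is the unique integer placing the fiber coordinate into $[0,g(x+a\alpha)+C)$. Using $S_a(g+C)=aC+S_a(g)$ together with the uniform bound
\[
|S_{tq_n}(g)(x)|\leq t\cdot\sup_{x\in\mathbb{T}}|S_{q_n}(g)(x)|\leq t\cdot o(q_{n+1}^{-4/5})=o(1)
\]
from Lemma~\ref{gcond}, a direct calculation shows $a\in\{tq_n-1,\,tq_n,\,tq_n+1\}$. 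When $a=tq_n$, the image $(x+tq_n\alpha,\,y-S_{tq_n}(g)(x))$ lies within $\|tq_n\alpha\|+|S_{tq_n}(g)(x)|$ of $(x,y)$, which is $o(1)$ since $\|tq_n\alpha\|\leq t/q_{n+1}\leq q_{n+1}^{-1/5}=o(1)$. In the off-by-one cases, the image lies within $o(1)$ of the roof (resp.\ base) at base $x+(tq_n\mp 1)\alpha$, which by the identification $(x',g(x')+C)\sim(x'+\alpha,0)$ is within $o(1)$ of $(x+tq_n\alpha,0)$ in the quotient metric; moreover those cases arise only when $(x,y)$ itself is within $o(1)$ of the base (resp.\ roof), so $(x,y)$ is also $o(1)$-close to $(x+tq_n\alpha,0)$. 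All estimates are uniform in $(x,y)$ and in $t<q_{n+1}^{4/5}$, yielding \eqref{rigid}.

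Finally, I would apply Theorem~\ref{strrigid} to conclude that each square orbit $(T_C^{i^2}(x,y))$ equidistributes with respect to the unique invariant measure, which is normalized Lebesgue on $X_{g+C}$.

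The main obstacle is the off-by-one wraparound bookkeeping in the rigidity step: an extra wrap shifts the fiber coordinate by nearly $C$ in the ambient $\mathbb{T}\times\mathbb{R}$, and one must argue carefully that this apparent discontinuity is absorbed by the quotient metric, exploiting the fact that off-by-one wraps occur only for $(x,y)$ already within $o(1)$ of the boundary of the fundamental domain.
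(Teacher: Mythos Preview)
Your proposal is correct, and the deduction of weak mixing and unique ergodicity via Fact~\ref{meascond} matches the paper. For the rigidity step, however, the paper takes a noticeably cleaner route than yours.

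The paper never descends to the fundamental domain. It works directly with equivalence classes and the induced quotient metric, and proves only the single-step estimate $d\bigl([(x,y)],T_C^{q_n}[(x,y)]\bigr)=o\bigl(q_{n+1}^{-4/5}\bigr)$, after which the triangle inequality over $t<q_{n+1}^{4/5}$ gives \eqref{rigid}. The single-step estimate is immediate: by definition $T_{q_nC}[(x,y)]=[(x,y+q_nC)]$, which lies within $|S_{q_n}(g)(x)|=o\bigl(q_{n+1}^{-4/5}\bigr)$ of $[(x,y+q_nC+S_{q_n}(g)(x))]=[(x,y+S_{q_n}(g+C)(x))]=[(x+q_n\alpha,y)]$, and the latter is within $\|q_n\alpha\|=o\bigl(q_{n+1}^{-1}\bigr)$ of $[(x,y)]$. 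No case analysis is needed, because the quotient metric automatically absorbs the roof/base identification.

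Your approach instead computes $T_C^{tq_n}$ directly in the fundamental domain, which forces you to determine the wrap count $a$ and handle the off-by-one cases $a=tq_n\pm1$. Your bookkeeping there is correct (in particular your observation that those cases occur only when $(x,y)$ is already $o(1)$-close to the boundary, so the quotient identification saves you), but all of that labour is exactly what the paper sidesteps by staying upstairs in $\mathbb{T}\times\mathbb{R}/\!\sim$ and proving the $t=1$ case first. What you gain is nothing; what the paper gains is a three-line proof in place of a page of case analysis.
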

\begin{proof}
    Again, we check the assumptions of Theorem \ref{strrigid} for the sequence $(q_n)$ of denominators of $\alpha$. 

    We have
    \[
    T_{q_nC}([(x, y)])=[(x, y+q_nC)],
    \]
    which by Lemma \ref{gcond} lies at most $o(q_{n+1}^{-4/5})$ away from
    \[
    \left[(x, y+q_nC+S_{q_n}(g)(x))\right]=[(x+q_n\alpha, y)],
    \]
    which in turn is at distance at most $\norm{q_n\alpha}=o(q_{n+1}^{-1})$ from $[(x, y)]$.
Unique ergodicity and weak mixing are given by Fact \ref{meascond}.

\end{proof}

\subsection{A comment on higher degree polynomials and other sequences}\label{other}

Here we discuss the limitations of the presented method and its applications to sequences other than squares.

Let us first mention, that the rigidity constraint (\ref{strrigid}) is close to the strongest one we could require:
\begin{fact}\label{rrr}
    If a system $(X, T)$ satisfies 
\[
\sup_{t<q_{n+1}}\sup_{x\in X}d(x, T^{tq_n}x)=o(1).
\]
    for some sequence $(q_n)$ satisfying $\gcd(q_n, q_{n+1})=1$,  then $T$ is the identity map.
\end{fact}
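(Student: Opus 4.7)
The plan is to show that any fixed integer power $T^m$ is uniformly arbitrarily close to the identity, which (being independent of $n$) forces $T^m = \id$; applied to $m=1$ this gives the claim.

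First I would extend the hypothesis to negative multiples. Applying the assumption to the point $T^{-tq_n}x$ in place of $x$ gives
\[
d(T^{-tq_n}x,x)=d(y,T^{tq_n}y)=o(1)
\]
uniformly in $x$ for $0<t<q_{n+1}$, where $y=T^{-tq_n}x$. Thus $\sup_{|a|<q_{n+1}}\sup_{x}d(x,T^{aq_n}x)=o(1)$, and the same holds for the pair $(q_{n+1},q_{n+2})$.

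Next, for each fixed $m\in\Z$ I would use the coprimality $\gcd(q_n,q_{n+1})=1$ together with Bezout: choose the unique $a_n$ with $|a_n|\leq q_{n+1}/2$ and $a_nq_n\equiv m\pmod{q_{n+1}}$, and set $b_n=(m-a_nq_n)/q_{n+1}$. Then $|b_n|\leq |m|/q_{n+1}+q_n/2$, so for $n$ large enough (depending on $m$) one has $|a_n|<q_{n+1}$ and $|b_n|<q_{n+2}$. Writing $T^m=T^{a_nq_n}\circ T^{b_nq_{n+1}}$, the triangle inequality gives
\[
d(x,T^m x)\leq d(x,T^{b_nq_{n+1}}x)+d(T^{b_nq_{n+1}}x,T^{a_nq_n}T^{b_nq_{n+1}}x),
\]
and both terms are bounded uniformly in $x$ by the extended rigidity applied at levels $n+1$ and $n$ respectively. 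Hence $\sup_{x}d(x,T^m x)=o(1)$ as $n\to\infty$; since the left-hand side does not depend on $n$, it equals $0$, so $T^m=\id$.

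Taking $m=1$ concludes the argument. I do not anticipate any real obstacle: the only small point is ensuring the right-hand coefficient $b_n$ stays below $q_{n+2}$, which is automatic because $|b_n|\lesssim q_n<q_{n+2}$ for large $n$, and ensuring the hypothesis can be used for negative $t$, which follows from the homeomorphism property and the uniform supremum over $x\in X$.
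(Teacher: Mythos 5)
Your proof is correct and follows essentially the same route as the paper: use $\gcd(q_n,q_{n+1})=1$ to write the exponent as $a_nq_n+b_nq_{n+1}$ with $|a_n|<q_{n+1}$ and $|b_n|<q_{n+2}$, then conclude by the triangle inequality and the rigidity hypothesis at levels $n$ and $n+1$. The only cosmetic difference is that the paper takes $a_n$ to be the multiplicative inverse of $q_n$ modulo $q_{n+1}$ and compares $T^{a_nq_n}x$ with both $x$ and $Tx$, which keeps all exponents nonnegative, whereas you allow signed Bezout coefficients and therefore need (and correctly justify) the extension of the hypothesis to negative $t$.
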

\begin{proof}
    Let $0\leqslant a_n<q_{n+1}$ be the multiplicative inverse of $q_n$ modulo $q_{n+1}$, so that there exists $0\leqslant b_n\leqslant q_n$
    \[
    a_nq_n=b_nq_{n+1}+1.
    \]
    By the triangle inequality, for any $x\in X$ we have
    \[
    d(x, Tx)\leqslant d(x, T^{a_nq_n}x)+d(Tx, T^{b_nq_{n+1}}(Tx))=o(1),
    \]
    so $Tx=x$.
\end{proof}

In the proof of Theorem \ref{strrigid}, besides the input of Lemma \ref{lem}, the only property of the sequence $(n^2)$ we used is a bound on the growth rate, which we used in (\ref{grwrt}). 

For an arbitrary sequence $(a_n)$ of positive integers, as long as an analogue of this lemma holds together with an appropriate growth rate, we can therefore expect an analogue of Theorem \ref{strrigid}.

To be specific, let
\[
h_1(N)=\max_{i<N}\abs{a_i-a_{i+1}},
\]
and assume that the asymptotic
\[
\sum\limits_{\substack{m\equiv a\pmod{r}\\m\in [x, x+M]}}\abs{\{i\in [t, t+N]: a_i\equiv m\pmod{q}\}}=\left(1+o(1)\right)\frac{MN}{qr}
\]
holds for parameters as in Lemma \ref{lem} as long as $M/r$ grows faster than $h_2(q)$, and $N$ faster than $h_3(q)$ for some functions, $h_2, h_3$. Then we can produce an analogue of Theorem \ref{strrigid} for $(a_n)$ with the rigidity condition
\[
\sup_{t<h(n)}\,\,\sup_{x\in X}d(x, T^{tq_n}x)=o(1),
\]
where
\[
h(n)=\max\left(h_2(q_{n+1}), \frac{h_3(q_n)}{q_n}h_1(h_3(q_{n+1}))\right)
\]
By Fact \ref{rrr} this condition is nontrivial only if $h(n)=o(q_{n+1})$.
In turn, if
\[
\max (h_2(n), h_1(h_3(n)))=o(n),
\]
even if only along primes, methods from Section \ref{examples} give nontrivial examples of skew product systems and special flows in which each $(a_i)$-orbit is equidistributed.

As it turns out, this constraint seems strong enough to exclude sequences $(P(n))$ for polynomials $P$ of degree at least 4. In particular, even for the sequence $(n^{4})$, we have
\[
h_1(N)\approx N^{3},
\]
but the method presented in Lemma \ref{lem}, even after restricting to prime $q$, only lets us get the result for $h_3(q)=q^{1/2+\varepsilon}$. We then have
\[
h(n)\geqslant \frac{q_n^{1/2+\varepsilon}}{q_{n}}\cdot q_{n+1}^{3/2+3\varepsilon}\geqslant q_{n+1}.
\]

Because of this, we believe new methods would be necessary to obtain analogous results along higher-degree polynomials.

\section{Weak rigidity}\label{weakrig}
In this section we prove Theorem \ref{refin}, stating that a much weaker rigidity condition implies equidistribution along a subsequence of the $(n^C)-$orbit. Throughout this section the exponent $C\geqslant 2$ will be a fixed integer.

\subsection{Approximations by combinations of scaled characters}

\begin{defn}
For natural $N$ let 
\[
\Pow_N(x)=\left\lvert\{t\in[1, N]: t^C\equiv x\pmod{N}\}\right\rvert
\]
and for $d|N$ let
\[
\Pow_N(x, d)=\left\lvert\{t\in[1, N]: \gcd(t, N)=d\quad\text{and}\quad t^C\equiv x\pmod{N}\}\right\rvert.
\]
\end{defn}

We would like to establish, that this function can be approximated by linear combinations of the following "scaled characters", with bounded coefficients and control over the number of terms.

\begin{defn} For integers $d, n$ we denote by $A(n, d)$ the set of all functions $h:\mathbb{N}\to\mathbb{C}$ of the form
\[
h(x)=\begin{cases} \chi\left(\frac{x}{d}\right)\quad\text{for $d|x$}\\0\quad\text{otherwise}\end{cases}
\]
for some character $\chi$ of modulus $n$.
\end{defn}
In particular, $A(N, 1)$ is the set of all characters of modulus $N$, and $A(1, N)$ is a singleton containing only the characteristic function of the set of multiples of $N$. 

\begin{lem}\label{scaled}
   Let $f_i\in A(n_i, d_i)$ for $1\leqslant i\leqslant k$, and assume that $\gcd(n_id_i, n_jd_j)=1$ for any $i\ne j$. Then 
    \[
    a\cdot \prod_{i=1}^{k}f_i\in A\left(\prod_{i=1}^{k}n_i, \prod_{i=1}^{k}d_i\right)
    \]
    for some $a\in\mathbb{C}$ of norm 1.
\end{lem}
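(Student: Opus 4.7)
The plan is to compute the product $\prod_{i=1}^k f_i$ directly and match it to an element of $A(N,D)$ where $N=\prod n_i$ and $D=\prod d_i$. First, note that $\prod_i f_i(x)$ vanishes unless $d_i \mid x$ for every $i$; since the $d_i$ are pairwise coprime (a consequence of $\gcd(n_id_i,n_jd_j)=1$), this vanishing condition is equivalent to $D\mid x$, which matches the support required for membership in $A(N,D)$.

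Assuming $D\mid x$, write $x=Dy$ and observe $x/d_i=(D/d_i)y$. Using complete multiplicativity of the Dirichlet characters $\chi_i$ defining $f_i$, we get
\[
\prod_{i=1}^k f_i(x)=\prod_{i=1}^k \chi_i\!\left(\frac{D}{d_i}\,y\right)=\left(\prod_{i=1}^k \chi_i\!\left(\frac{D}{d_i}\right)\right)\cdot\prod_{i=1}^k \chi_i(y).
\]
The coprimality hypothesis $\gcd(n_id_i,n_jd_j)=1$ for $i\ne j$ implies $\gcd(n_i,d_j)=1$ for $j\ne i$, so $\gcd(n_i, D/d_i)=1$; hence each factor $\chi_i(D/d_i)$ lies on the unit circle, and their product $a':=\prod_i\chi_i(D/d_i)$ has norm $1$.

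It remains to identify $\chi:=\prod_i\chi_i$ as a Dirichlet character of modulus $N$. Since the $n_i$ are pairwise coprime, the Chinese remainder theorem gives $(\Z/N\Z)^*\cong\prod_i(\Z/n_i\Z)^*$, and the product of the pulled-back characters is a character of $(\Z/N\Z)^*$. For the vanishing outside units, if $\gcd(y,N)>1$ then some prime $p$ divides both $y$ and $N$; since the $n_i$ are coprime, $p$ divides a unique $n_{i_0}$, forcing $\chi_{i_0}(y)=0$ and hence $\chi(y)=0$. Thus $\chi$ is a Dirichlet character of modulus $N$, and setting $a=\overline{a'}$ yields $a\cdot\prod_i f_i\in A(N,D)$.

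I do not anticipate a substantial obstacle: the argument is a bookkeeping exercise combining complete multiplicativity with CRT. The only point requiring care is checking that the coprimality hypothesis is strong enough to ensure both that $D/d_i$ is invertible mod $n_i$ (so the scalar $a'$ has modulus one) and that $\prod\chi_i$ is genuinely a character mod $N$ rather than merely a function on integers coprime to $N$ lifted arbitrarily.
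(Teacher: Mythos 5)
Your proof is correct and follows essentially the same route as the paper: the paper factors out the scalar $\chi_1(d_2)\chi_2(d_1)$ in the $k=2$ case and then inducts, while you carry out the identical computation for all $k$ at once, extracting $\prod_i\chi_i(D/d_i)$ and checking via the coprimality hypothesis and CRT that $\prod_i\chi_i$ is a character of modulus $\prod_i n_i$. The extra verifications you include (that $\gcd(n_i, D/d_i)=1$ so the scalar has norm one, and that the product vanishes off the units) are exactly the points the paper leaves implicit, so no discrepancy arises.
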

\begin{proof}
    The general case follows by induction from the case $k=2$. Let $\chi_1$ and $\chi_2$ be the related characters of modulus $n_1$ and $n_2$ respectively. Then 
    \[    f_1f_2(d_1d_2x)=\chi_1(d_2x)\chi_2(d_1x)=\chi_1(d_2)\chi_2(d_1)\cdot \chi_1\chi_2(x),
    \]
    and $f_1f_2(x)$ is nonzero only if $d_1$ and $d_2$ divide $x$, so if $d_1d_2$ divides $x$. Hence setting $a=\overline{\chi_1(d_2)\chi_2(d_1)}$ works.
\end{proof}

\begin{lem}\label{undiv}
For any prime $p$ and natural number $e\geqslant 1$ the function $\Pow_{p^e}(x, 1)$ can be written as sum of at most $2C$ functions from $A(p^e, 1)$.
\end{lem}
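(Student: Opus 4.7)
The plan is to reduce the claim to character orthogonality on $G=(\mathbb{Z}/p^e\mathbb{Z})^{*}$, combined with a structural bound on the $C$-torsion of $G$. First I would observe that $\Pow_{p^e}(x,1)$ vanishes whenever $p\mid x$, since $\gcd(t,p^e)=1$ implies $\gcd(t^C,p^e)=1$; so it is enough to describe the restriction of $\Pow_{p^e}(\cdot,1)$ to $G$, where by definition it counts $C$-th roots of $x$ in $G$.

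Viewing the $C$-th power map as an endomorphism $\phi\colon G\to G$ with kernel $K=G[C]$ and image $H=\phi(G)$, every element of $H$ has exactly $|K|$ preimages, so
\[
\Pow_{p^e}(x,1)=|K|\cdot\mathbf{1}_{H}(x)
\]
on $G$. Applying \eqref{ortho} to the quotient group $G/H$, and identifying characters of $G/H$ with characters of $G$ that are trivial on $H$, I get
\[
\mathbf{1}_{H}(x)=\frac{1}{|G/H|}\sum_{\chi}\chi(x),
\]
where the sum runs over the $|G/H|$ characters of $G$ killing $H$. Since $|G|=|K|\cdot|H|$, we have $|G/H|=|K|$, so the prefactors cancel and $\Pow_{p^e}(x,1)$ becomes a sum of $|K|$ Dirichlet characters of modulus $p^e$, i.e.\ of $|K|$ elements of $A(p^e,1)$ (after extending by zero on multiples of $p$ on both sides).

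It then remains to check $|K|\leqslant 2C$. When $p$ is odd, or when $p=2$ and $e\leqslant 2$, the group $G$ is cyclic, and in a cyclic group $|G[C]|=\gcd(C,|G|)\leqslant C$. The one case requiring a moment's care is $p=2$ with $e\geqslant 3$, where $G\cong\mathbb{Z}/2\mathbb{Z}\times\mathbb{Z}/2^{e-2}\mathbb{Z}$ is not cyclic; there $|G[C]|=\gcd(C,2)\cdot\gcd(C,2^{e-2})\leqslant 2\cdot C=2C$. I expect this noncyclic $2$-adic case to be the only genuine obstacle, and the factor of $2$ in the statement seems to be there precisely to accommodate the extra $\mathbb{Z}/2\mathbb{Z}$ factor in $(\mathbb{Z}/2^e\mathbb{Z})^{*}$; everything else is formal character theory.
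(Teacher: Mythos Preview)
Your proposal is correct and follows essentially the same argument as the paper: both express $\Pow_{p^e}(x,1)$ on $G=(\mathbb{Z}/p^e\mathbb{Z})^{*}$ as $[G:H]\cdot\mathbf{1}_H$ with $H$ the image of the $C$-th power map, decompose $\mathbf{1}_H$ via orthogonality of characters of $G/H$, and then bound $[G:H]=|G[C]|\leqslant 2C$ using the explicit structure of the unit group (cyclic unless $p=2$, $e\geqslant 3$). The only cosmetic difference is that you phrase the count as $|K|$ with $K=\ker\phi$ rather than $[G:H]$, which amounts to the same thing by the first isomorphism theorem.
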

\begin{proof}
Let $G=\left(\mathbb{Z}/p^e\mathbb{Z}\right)^{*}$, and $H$ be the subgroup of $G$ formed by all $C$-th powers of elements of $G$. By (\ref{ortho}) we have, that
\[
\sum\limits_{\chi\in\widehat{G/H}}\chi(g)=\begin{cases}[G:H]\quad\text{for $g=e_{G/H}$}\\ 0\quad\text{otherwise}\end{cases}
\]
for $g\in G/H$. If we denote by $h$ the standard homomorphism from $G$ to $G/H$, then we get
\[
\sum\limits_{\chi\in\widehat{G/H}}\chi\circ h(g)=\begin{cases}[G:H]\quad\text{for $g\in H$}\\ 0\quad\text{otherwise}\end{cases}
\]
for any $g\in G$. But raising to the $C$-th power is a homomorphism from $G$ onto $H$, so the preimage of any element of $H$ under it is of size $[G:H]$. Thus, treating $G$ as a subset of the interval $[1, p^e]$ of $\mathbb{Z}$, by the above equality we have
\[
\sum\limits_{\chi\in\widehat{G/H}}\chi\circ h(g)=|\{h\in G: h^C=g\}|=\Pow_{p^e}(g, 1)
\]
for $g\in G$. 

Since $\Pow_{p^e}(x, 1)$ is periodic with period $p^e$, and has value 0 whenever $p|x$, the $[G:H]$ Dirichlet characters corresponding to $\chi\circ h$ for $\chi\in \widehat{G/H}$ add up to $\Pow_{p^e}(x, 1)$ on all of $\mathbb{Z}$.
Thus, it remains to show that $[G:H]$ is at most $2C$.\\

For $p>2$ the group $G$ is cyclic of order $(p-1)p^{e-1}$. Then we can easily compute $[G:H]=\gcd(|G|, C)$. If $p=2$, then $G\cong C_2\times C_{2^{e-2}}$, where $C_k$ denotes the cyclic group of order $k$. In this case we again compute
\[
[G:H]=\begin{cases}2\gcd(2^{e-2}, C) \quad\text{if $2\mid C$}\\1\quad\text{otherwise}\end{cases}.
\]
In all cases clearly $[G:H]\leqslant 2C$.

\end{proof}
\begin{lem}\label{smpr}
    For any prime $p$ and fixed $0\leqslant f\leqslant e$, the function $\Pow_{p^e}(i, p^f)$ can be written as a sum of at most $2C\cdot p^{Cf}$ functions, each in $A(p^i, p^{e-i})$ for some $i\in [0, e]$. 
    
\end{lem}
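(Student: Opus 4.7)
The plan is to parameterize any $t$ with $\gcd(t, p^e) = p^f$ as $t = p^f s$ where $s \in [1, p^{e-f}]$ and $\gcd(s, p) = 1$, and then to reduce the counting problem to Lemma \ref{undiv} applied to a smaller modulus. Under this substitution the condition $t^C \equiv x \pmod{p^e}$ becomes $p^{Cf} s^C \equiv x \pmod{p^e}$, which splits naturally into two cases depending on whether $Cf \geqslant e$ or $Cf < e$.

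In the main case $Cf < e$, the congruence has no solutions unless $p^{Cf} \mid x$, in which case writing $x = p^{Cf} y$ it reduces to the condition $s^C \equiv y \pmod{p^{e-Cf}}$ with $s$ coprime to $p$. The number of such $s \in [1, p^{e-f}]$ equals $p^{(C-1)f}$ times the number of solutions modulo $p^{e-Cf}$, the latter count being exactly $\Pow_{p^{e-Cf}}(y, 1)$. By Lemma \ref{undiv} this quantity is a sum of at most $2C$ Dirichlet characters $\chi_j$ of modulus $p^{e-Cf}$, so precomposing with the rescaling $x \mapsto x/p^{Cf}$ and attaching the indicator $\mathbf{1}_{p^{Cf} \mid x}$ produces at most $2C$ functions, each lying in $A(p^{e-Cf}, p^{Cf})$, i.e.\ in $A(p^i, p^{e-i})$ with $i = e - Cf \in [0, e]$. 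The scalar factor $p^{(C-1)f}$ is then absorbed by writing each of these functions $p^{(C-1)f}$ times in the sum, giving at most $2C \cdot p^{(C-1)f} \leqslant 2C \cdot p^{Cf}$ summands in total.

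For the boundary case $Cf \geqslant e$, every admissible $t$ satisfies $t^C \equiv 0 \pmod{p^e}$, so $\Pow_{p^e}(x, p^f)$ equals $\varphi(p^{e-f})$ when $p^e \mid x$ (with the convention $\varphi(1) = 1$ covering $f = e$), and $0$ otherwise. This is a scalar multiple of the unique function in $A(1, p^e)$, and the required multiplicity $\varphi(p^{e-f}) \leqslant p^{e-f} \leqslant p^{(C-1)f} \leqslant 2C \cdot p^{Cf}$ is well within budget, so we can express it as a sum of $\varphi(p^{e-f})$ copies of $\mathbf{1}_{p^e \mid x}$.

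The main (mild) obstacle is bookkeeping: verifying that the Dirichlet characters produced by Lemma \ref{undiv} truly give rise to functions of the form prescribed by the definition of $A(p^{e-Cf}, p^{Cf})$ once composed with the rescaling $x = p^{Cf} y$, and correctly tracking the multiplicity $p^{(C-1)f}$ coming from lifting solutions modulo $p^{e-Cf}$ to solutions modulo $p^{e-f}$. No new analytic input is needed; everything reduces to Lemma \ref{undiv} together with elementary bookkeeping of $p$-adic valuations.
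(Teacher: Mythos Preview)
Your proof is correct and follows essentially the same two-case split as the paper: for $Cf \geqslant e$ the function is a constant multiple of $\mathbf{1}_{p^e\mid x}\in A(1,p^e)$, and for $Cf<e$ one substitutes $t=p^f s$ and reduces to Lemma~\ref{undiv} at modulus $p^{e-Cf}$, landing in $A(p^{e-Cf},p^{Cf})$. Your lifting multiplicity $p^{(C-1)f}$ is in fact the correct one (the paper writes $p^f$, which is only right for $C=2$), and both are comfortably below the stated bound $2C\cdot p^{Cf}$.
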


\begin{proof}
    We split the proof into two cases.
    \begin{enumerate}
        \item ($Cf\geqslant e$) In this case, $\Pow_{p^e}(i, p^f)$ can only have positive value when $p^e|i$, and the value there is $p^{e-f}(1-\frac{1}{p})$ if $f<e$ and $1$ if $e=f$. Hence our function is a product of a constant of value at most $p^{e-f}<2C\cdot p^{Cf}$ and the unique function in $A(1, p^e)$.
        \item ($Cf<e$) In this case, $\Pow_{p^e}(i, p^f)$ can have positive value only if $\nu_p(i)=Cf$, and the value at such $i$ is equal to
        \[
        p^f\Pow_{p^{e-Cf}}\left(\frac{i}{p^{Cf}}, 1\right). 
        \]
        By Lemma \ref{undiv} this is the sum of at most $2C\cdot p^f$ functions from $A(p^{e-Cf}, p^{Cf})$. 
    \end{enumerate}
\end{proof}
\begin{lem}\label{approx}
    For any natural numbers $d|N$ there exists a function $h:\mathbb{N}\to\mathbb{C}$ such that
    \begin{equation}\label{mmm}
    \sum\limits_{i<N}\left\lvert\Pow_N(i)-h(i)\right\rvert\leqslant N\cdot \sum_{\substack{p|N\\\nu_p(N)>\nu_p(d)}}p^{-\nu_p(d)}
    \end{equation}
    and
    \[
    h=\sum\limits_{s\in S}a_sf_s,
    \]
    for some:
    \begin{itemize}
    \item set $S$ of size at most $(2C)^{\omega(N)}\cdot d^{C+1}$, 
    \item complex numbers $a_s$  of absolute value 1 for $s\in S$,
    \item functions $f_s$ for $s\in S$, each of which is in $A(N/d_s, d_s)$ for some $d_s|d$.
    \end{itemize}
\end{lem}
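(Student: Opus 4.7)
The plan is to decompose $\Pow_N(x)$ multiplicatively using the Chinese Remainder Theorem, expand each prime-power factor via Lemma \ref{smpr}, and combine the results using Lemma \ref{scaled}. I would start from the gcd decomposition
\[
\Pow_N(x) \;=\; \sum_{d' \mid N} \Pow_N(x, d'),
\qquad
\Pow_N(x, d') \;=\; \prod_{p \mid N} \Pow_{p^{\nu_p(N)}}\!\bigl(x, p^{\nu_p(d')}\bigr),
\]
where the second equality is CRT applied to the congruence $t^C \equiv x \pmod{N}$ with $\gcd(t,N)=d'$.

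I would then set $h$ to be the sum over those ``good'' $d' \mid d$ for which, at every prime $p$ with $\nu_p(N) > \nu_p(d)$, Lemma \ref{smpr} falls in Case 2 with local divisor $p^{C\nu_p(d')}$ dividing $p^{\nu_p(d)}$; at primes with $\nu_p(N) = \nu_p(d)$ either case of Lemma \ref{smpr} is acceptable. For each such good $d'$, applying Lemma \ref{smpr} locally and assembling the pieces via Lemma \ref{scaled} yields a sum of products of the required form, with unit-modulus coefficients and $d_s = \prod_p p^{\nu_p(N) - i_p}$ automatically dividing $d$ by construction. Counting terms, each good $d'$ contributes $\prod_p 2Cp^{C\nu_p(d')} = (2C)^{\omega(N)}(d')^{C}$ summands, hence
\[
|S| \;\leq\; (2C)^{\omega(N)} \sum_{d' \mid d} (d')^C \;\leq\; (2C)^{\omega(N)}\tau(d)\, d^C \;\leq\; (2C)^{\omega(N)} d^{C+1}
\]
using $\tau(d) \leq d$.

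For the error, the discarded mass $\sum_{d'\text{ bad}}\Pow_N(\cdot,d')$ has total $L^1$ norm $\sum_{d'\text{ bad}}\varphi(N/d')$, which equals the count of $t \in [1,N]$ whose $\gcd(t,N)$ is a bad divisor. A union bound over the primes $p$ with $\nu_p(N) > \nu_p(d)$ is then supposed to yield $N\sum_{p:\nu_p(N)>\nu_p(d)} p^{-\nu_p(d)}$ as claimed.

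The main obstacle I foresee is matching this union-bound error to the stated target exactly: a naive application only shows that discarded $t$'s are divisible by $p^{\lfloor\nu_p(d)/C\rfloor+1}$ rather than $p^{\nu_p(d)+1}$, since the threshold separating Case 1 from Case 2 in Lemma \ref{smpr} scales by $C$. Tightening this gap is the delicate step: it presumably requires either enlarging the notion of a good $d'$ so that additional Case-1 contributions can be absorbed into $h$ via the scaled-character basis, or a more refined classification of which gcds actually produce incompatible $d_s$, exploiting that $\min(C\nu_p(t), \nu_p(N))$ must exceed $\nu_p(d)$ rather than merely $\nu_p(d)/C$ for the contribution to be genuinely discarded.
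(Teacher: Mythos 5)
Your construction follows the paper's proof almost verbatim: take $h=\sum_{d'}\Pow_N(\cdot,d')$ over (a subset of) the divisors $d'$ of $d$, factor $\Pow_N(\cdot,d')$ over prime powers by CRT, expand each local factor by Lemma \ref{smpr}, reassemble with Lemma \ref{scaled}, and count $(2C)^{\omega(N)}(d')^{C}$ terms per divisor. The step you could not close is, however, exactly the crux, and as written your proposal does not establish (\ref{mmm}): with your ``good $d'$'' selection the union bound only yields the error $N\sum_{p:\nu_p(N)>\nu_p(d)}p^{-\lfloor\nu_p(d)/C\rfloor-1}$, which is weaker than the stated bound as soon as some $\nu_p(d)\geqslant \lfloor\nu_p(d)/C\rfloor+2$ (e.g.\ $\nu_p(d)=3$, $C=2$). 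So there is a genuine gap in the proposal.

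That said, you should not try to bridge it, because the obstruction you isolated is real and the bound (\ref{mmm}) is not attainable in general; your restriction to good divisors is the correct move. Indeed, if $\nu_p(N)>\nu_p(d)$, then every admissible $f_s\in A(N/d_s,d_s)$ with $d_s\mid d$ vanishes at every $i$ with $\nu_p(i)>\nu_p(d)$, since the underlying character has modulus divisible by $p$; hence any admissible $h$ satisfies
\[
\sum_{i<N}\abs{\Pow_N(i)-h(i)}\geqslant\sum_{\substack{i<N\\ \nu_p(i)>\nu_p(d)}}\Pow_N(i)=\#\left\{t\leqslant N:\ C\min\bigl(\nu_p(t),\nu_p(N)\bigr)>\nu_p(d)\right\},
\]
and for $C=2$, $N=p^{10}$, $d=p^{3}$ the right-hand side is $Np^{-2}$, exceeding the budget $Np^{-3}$ of (\ref{mmm}). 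The paper's own argument glosses over this at the sentence asserting that each reassembled product lies in $A(N/\overline d,\overline d)$ with $\overline d\mid d'$: by Lemma \ref{smpr} the local scaling divisor at $p$ is $p^{\nu_p(N)}$ in Case 1 and $p^{C\nu_p(d')}$ in Case 2, and neither need divide $p^{\nu_p(d')}$, nor even $p^{\nu_p(d)}$ --- precisely the incompatibility you noticed. The correct resolution is to prove the lemma with the error your construction actually gives, namely $N\sum_{p:\nu_p(N)>\nu_p(d)}p^{-\lfloor\nu_p(d)/C\rfloor-1}$ (the lower bound above shows this is essentially sharp), rather than to enlarge the set of good $d'$. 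This weaker bound loses nothing downstream: in the proof of Theorem \ref{refin}' the primes with $\nu_p(N)>\nu_p(d)$ have either $p>\varepsilon^{-1}$ or $\nu_p(d)\geqslant\varepsilon^{-1}$, and both cases remain negligible after the exponent is divided by the fixed constant $C$.
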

\begin{proof}
    We define $h$ as 
    \[
    h(i)=\sum\limits_{d'|d}\Pow_N(i, d').
    \]
    Then for any $i$ we have 
    \[
    Pow_N(i)=\sum\limits_{d'|N}\Pow_N(i, d')\geqslant h(i).
    \]
    Also, for any $d'|d$ we have
    \[
    \sum\limits_{i<N}\Pow_N(i, d')=\sum\limits_{i<N}|\{a\in [1, N]: a^C\equiv i\pmod N\}|=|\{a\in [1, N]: \gcd(a, N)=d'\}|,
    \]
    so the sum of the values of $h$ on the interval $[1, N]$ is just the number of $a\in[1, N]$ satisfying $\gcd(a, N)|d$. For each $a$ that does not satisfy this, there exists a prime $p|N$ for which $\nu_p(N)<\nu_p(d)$ and $p^{\nu_p(d)+1}|a$, so the number of such $a$ is at most
    \[
N\cdot \sum_{\substack{p|N\\\nu_p(N)<\nu_p(d)}}p^{-\nu_p(d)} 
\]
    proving (\ref{mmm}). \\

   By the Chinese remainder theorem for any positive integers $d_1|n_1$ and $d_2|n_2$ with $\gcd(n_1, n_2)=1$ we have
    \[
    \Pow_{n_1n_2}(i, d_1d_2)=\Pow_{n_1}(i, d_1)\cdot \Pow_{n_2}(i, d_2).
    \]
    Iterating this result we obtain
    \[
    \Pow_N(i, d')=\prod_{p|N}\Pow_{p^{\nu_p(N)}}\left(i, p^{\nu_{p_t}(d')}\right).
    \]
    We now apply Lemma \ref{smpr} to each of the $\omega(N)$ factors, and expand the product into a sum of at most 
    \[
   \prod_{p|N}2Cp^{C\nu_{p}(d')}=(2C)^{\omega(N)}\cdot (d')^C\leqslant (2C)^{\omega(N)}\cdot d^C
    \]
    functions, each of the form 
    \[
   \prod_{p|N}r_p\quad\text{for some}\quad r_p\in \bigcup_{a=0}^{\nu_p(N)}A\left(p^{a}, p^{\nu_p(N)-a}\right).
    \]
    By Lemma (\ref{scaled}) each such product is of the form $af$ for $a\in\mathbb{C}$ of absolute value 1, and $f\in A(N/\overline{d}, \overline{d})$ for some $\overline{d}|d'$, which implies $\overline{d}|d$. Therefore each $\Pow_N(i, d')$ for $d'|d$ is a sum of at most $(2C)^{\omega(N)}\cdot d^C$ functions of the desired form, so $h$ is a sum of at most $(2C)^{\omega(N)}\cdot d^{C+1}$ of them.
    \end{proof}

\subsection{Averaging products of scaled characters and $g$}

The main goal of this section is proving Lemma \ref{scalchar}, which concerns a function $g$ which is in some sense almost periodic and has small averages along arithmetic sequences. From this we achieve bounds on the average values of the products of $g$ and scaled characters. Eventually we will apply this to the function $g(n)=f(T^n(x))$.

We first extend Theorem \ref{burg} from bounding character sums over intervals of fixed length to arithmetic progressions of fixed length.

\begin{lem}\label{arithm} 
	Let $m, n$ be natural numbers and let $\chi$ be a character of modulus $n$. Assume, that $\chi$ is not periodic with period $m$.
	Then for any natural number $L$ we have
	\[\sum\limits_{x<n}\left\lvert\sum\limits_{i<L}\chi(x+im)\right\rvert<n\sqrt{mL}.
 \]
\end{lem}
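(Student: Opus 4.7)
My plan is to apply Cauchy--Schwarz to reduce the problem to an $L^2$-bound and then deduce this from Theorem~\ref{burg} after a suitable change of variables. By Cauchy--Schwarz,
\[
\Bigl(\sum_{x<n}\Bigl|\sum_{i<L}\chi(x+im)\Bigr|\Bigr)^{2}\le n\sum_{x<n}\Bigl|\sum_{i<L}\chi(x+im)\Bigr|^{2},
\]
so it is enough to prove that $\sum_{x<n}|\sum_{i<L}\chi(x+im)|^{2}<nmL$. The non-periodicity hypothesis will be what allows us, below, to apply Theorem~\ref{burg} to a non-principal character; without this hypothesis, $\chi(y+m)=\chi(y)$ would force the inner sum to equal $L\chi(x)$, immediately breaking the desired estimate.

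First I would dispose of the case $\gcd(m,n)=1$. Here $m$ is invertible modulo $n$, so the map $y\equiv m^{-1}x\pmod{n}$ is a bijection of $\mathbb{Z}/n\mathbb{Z}$ under which, by multiplicativity, $\chi(x+im)=\chi(m)\chi(y+i)$. Hence
\[
\sum_{x<n}\Bigl|\sum_{i<L}\chi(x+im)\Bigr|^{2}=\sum_{y<n}\Bigl|\sum_{i<L}\chi(y+i)\Bigr|^{2}<nL\le nmL
\]
by Theorem~\ref{burg}, since the non-periodicity assumption forces $\chi$ to be non-principal in this case.

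For the general case set $d:=\gcd(m,n)>1$. I would partition the outer sum by the residue $r=x\bmod d$: writing $x=r+x_1 d$ with $r\in[0,d)$, $x_1\in[0,n/d)$ and $m=dm'$ with $\gcd(m',n/d)=1$, we have $\chi(x+im)=\chi(r+(x_1+im')d)$. Applying the coprime substitution $y\equiv(m')^{-1}x_1\pmod{n/d}$ inside the $x_1$-sum yields
\[
\sum_{x<n}\Bigl|\sum_{i<L}\chi(x+im)\Bigr|^{2}=\sum_{r<d}\sum_{y<n/d}\Bigl|\sum_{i<L}\tilde\psi_r(y+i)\Bigr|^{2},
\]
where $\tilde\psi_r(u):=\chi(r+um)$ is an $(n/d)$-periodic function on $\mathbb{Z}$.

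The main obstacle is that $\tilde\psi_r$ is in general not a Dirichlet character modulo $n/d$, so Theorem~\ref{burg} cannot be applied to the inner sum directly. To surmount this I would further decompose $n=n_1 n_2$ via the Chinese remainder theorem, where $n_1$ collects the prime-power factors of $n$ at primes dividing $m$ (so $\gcd(n_2,m)=1$), and factor $\chi=\chi_1\chi_2$. The factor $\chi_2(r+um)$ linearises on $\mathbb{Z}/n_2\mathbb{Z}$ via the invertibility of $m$ modulo $n_2$, reducing the $\chi_2$-contribution to an interval character sum for $\chi_2$ of length $L$ which is bounded by Theorem~\ref{burg}; the non-periodicity of $\chi$ with period $m$ guarantees that the relevant Burgess input is non-principal. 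The residual $\chi_1$-contribution is then controlled by enumerating the admissible coset representatives $r\in[0,d)$ (for which $\chi_1(r)\ne 0$) and estimating the variation of $\chi_1(r+um)$ in $u$; the combinatorial cost of this enumeration produces the extra factor of $m$ appearing in the final bound $<nmL$.
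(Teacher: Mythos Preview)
Your coprime case is fine and matches the paper's in spirit: both reduce to Burgess's $L^2$ bound (Theorem~\ref{burg}) after the substitution $x\mapsto m^{-1}x$.

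The general case, however, has a real gap. The CRT factorisation $\chi=\chi_1\chi_2$ does not let you separate the two contributions inside the absolute value, because $\chi_1(r+um)$ is \emph{not} constant in the summation variable: it is only $(n_1/d)$-periodic in $u$, and $n_1/d>1$ whenever some prime $p\mid n$ satisfies $\nu_p(n)>\nu_p(m)$. Concretely, take $n=p^2$, $m=p$, and $\chi$ a primitive character mod $p^2$. Then $d=p$, $n_1=p^2$, $n_2=1$, so $\chi_2$ is trivial and there is nothing for Burgess to act on; yet $\chi$ is not periodic with period $m$, so the lemma must still apply. Your final paragraph (``estimating the variation of $\chi_1(r+um)$ in $u$'') is exactly where the whole difficulty lives in this example, and it is left as a hand-wave. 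The sentence ``the non-periodicity of $\chi$ with period $m$ guarantees that the relevant Burgess input is non-principal'' is false here: the relevant $\chi_2$ \emph{is} principal.

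The idea you are missing, and which the paper uses, is to avoid CRT entirely. After reducing the step from $m$ to $d=\gcd(m,n)$ (by multiplying through by a unit $a$ with $am\equiv d\pmod n$), one rewrites the length-$L$ progression of step $d$ through $x$ as a length-$Ld$ \emph{interval} sum twisted by the indicator $\mathbf{1}_{d\mathbb{Z}+t}$. Expanding this indicator via orthogonality of Dirichlet characters $\eta$ mod $d$ produces sums of the form $\sum_{i<Ld}(\chi\eta)(x+i)$, where $\chi\eta$ is an honest Dirichlet character mod $n$. The non-periodicity hypothesis is exactly what forces every $\chi\eta$ to be non-principal, so Theorem~\ref{burg} applies to each of them; Cauchy--Schwarz and the sum over $t<d$ then assemble the bound $n\sqrt{Ld}\le n\sqrt{Lm}$. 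This character-expansion of the indicator is the step that replaces your unfinished $\chi_1$-analysis.
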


\begin{proof}
Let $d=\gcd(m, n)$, and let $a\in\mathbb{Z}$. Integers $a$ satisfying 
    \[
    a\cdot \frac{m}{d}\equiv 1\pmod{\frac{n}{d}}
    \]
    form an arithmetic progression with difference $n/d$ and all terms coprime with $n/d$, so by the Chinese remainder theorem some such $a$ satisfies $\gcd(a, n)=1$. Then
    \[
  \sum\limits_{i<L}\chi(x+im)=\sum\limits_{i<L}\chi(ax+aim)=\chi(a)\sum\limits_{i<L}\chi(x+id).
    \]
   Notice that for $t\in\mathbb{Z}$ we have
   \[
  \sum\limits_{x<n}\mathbf{1}_{d\mathbb{Z}+t}(x)\left\lvert\sum\limits_{i<L}\chi(x+id)\right\rvert= \frac{1}{d}\sum\limits_{x<n}\left\lvert\sum\limits_{i<Ld}\chi(x+i)\mathbf{1}_{d\mathbb{Z}+t}(x+i)\right\rvert,
   \]
   since each term of the right double sum appears exactly $d$ times in the left double sum. If $\gcd(t, d)>1$ this is simply 0, and otherwise (\ref{ortho2}) gives 
   \[
    \mathbf{1}_{d\mathbb{Z}+t}(x)=\frac{1}{\varphi(d)}\sum\limits_{\eta}\eta(x)\cdot \overline{\eta(t)},
    \]
 where the sum is over the $\varphi(d)$ characters $\eta$ of modulus $d$. In this case
\begin{gather*}
\sum\limits_{x<n}\left\lvert\sum\limits_{i<Ld}\chi(x+i)\mathbf{1}_{d\mathbb{Z}+t}(x+i)\right\rvert=\frac{1}{\varphi(d)}\sum\limits_{x<n}\left\lvert\sum\limits_{i<Ld}\sum\limits_{\eta}\chi(x+i)\eta(x+i)\overline{\eta(t)}\right\rvert\leqslant \\ \leqslant\frac{1}{\varphi(d)}\sum\limits_{\eta}\sum\limits_{x<n}\left\lvert\sum\limits_{i<Ld}\chi(x+i)\eta(x+i)\right\rvert\leqslant \sqrt{n}\cdot \left(\frac{1}{\varphi(d)}\sum\limits_{\eta}\sum\limits_{x<n}\left\lvert\sum\limits_{i<Ld}\chi(x+i)\eta(x+i)\right\rvert^2\right)^{1/2},
\end{gather*}
where the last inequality follows by the Cauchy-Schwarz inequality.
But $\chi$ is not periodic with period $d|m$, so $\chi\eta$ is not principal, and so by Theorem \ref{burg} 
\[
\frac{1}{\varphi(d)}\sum\limits_{\eta}\sum\limits_{x<n}\left\lvert\sum\limits_{i<Ld}\chi(x+i)\eta(x+i)\right\rvert^2\leqslant nLd.
\]
Summarizing, we have
\begin{gather*}
\sum\limits_{x<n}\left\lvert\sum\limits_{i<L}\chi(x+im)\right\rvert=\sum\limits_{x<n}\left\lvert\sum\limits_{i<L}\chi(x+id)\right\rvert=\sum\limits_{t<d}\frac{1}{d}\sum\limits_{x<n}\left\lvert\sum\limits_{i<Ld}\chi(x+i)\mathbf{1}_{d\mathbb{Z}+t}(x+i)\right\rvert\leqslant \\\leqslant\sum\limits_{t<d}\frac{\sqrt{n}}{{d}}\cdot\sqrt{nLd}=n\sqrt{Ld}\leqslant n\sqrt{Lm}.
\end{gather*}

\end{proof}
	Notice, that in the case when $\chi$ is periodic with period $m$, there is no cancellation in the sum whatsoever. 
 \begin{lem}\label{scalchar}
     Let $n, m, L, d$ be positive integers such that $d|m, n$, and let $f\in A(n/d, d)$. Let $r=\lfloor{n/m}\rfloor$. Assume that for some function $g:\mathbb{N}\to[-1, 1]$ we have
     \[
     \max_{t<L}\sup_{x\in\mathbb{N}}\left\lvert g\left(x+tm\right)-g(x)\right\rvert\leqslant\varepsilon\]
   and
   \[\frac{1}{r}\left\lvert\sum\limits_{x<r}g(xm+t)\right\rvert\leqslant \varepsilon\quad\text{for $t\in \mathbb{N}$}. 
     \]
  Then we have  
   \[
     \frac{1}{n}\left\lvert\sum\limits_{x<n}g(x)f(x)\right\rvert\leqslant \frac{1}{d}\left(\sqrt{\frac{m}{L}}+\frac{2mL}{n}+\varepsilon\right).
     \]
 \end{lem}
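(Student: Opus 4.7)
The plan is to use the explicit form of $f\in A(n/d,d)$ to rewrite the target as a character sum weighted by $g$, and then split into two cases depending on whether the underlying character $\chi$ has period $m/d$. Writing $f(x)=\chi(x/d)\mathbf{1}_{d\mid x}$ for a character $\chi$ of modulus $N:=n/d$, and setting $h(y):=g(dy)$ and $m':=m/d$, the target becomes $S:=\sum_{y<N}h(y)\chi(y)$. Substituting $x=dy$ and $t=ds$ in the hypotheses yields $|h(y+tm')-h(y)|\leqslant\varepsilon$ for $t<L$ and $|\sum_{y<r}h(ym'+s)|\leqslant r\varepsilon$ for every $s\in\mathbb{N}$, with $r=\lfloor N/m'\rfloor$. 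The inequality to prove becomes $|S|/n\leqslant d^{-1}(\sqrt{m/L}+2mL/n+\varepsilon)$.

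When $\chi$ has period $m'$, grouping $y<N$ by its residue modulo $m'$ gives $S=\sum_{s<m'}\chi(s)\sum_{k:\,s+km'<N}h(s+km')$. Each inner sum consists of $r$ or $r+1$ consecutive terms of an arithmetic progression of common difference $m'$, so the averaging hypothesis bounds its absolute value by $r\varepsilon+1$. Summing over $s<m'$ yields $|S|\leqslant \varepsilon N+m'$, and dividing by $n=dN$ gives $|S|/n\leqslant\varepsilon/d+m/(dn)$, which is within the claim since $L\geqslant 1$.

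When $\chi$ is not periodic with period $m'$, apply a Weyl-van der Corput shift: for each $s<L$ replace $h(y)$ by $h(y+sm')$, incurring per-shift error at most $N\varepsilon$ by the rigidity hypothesis. Averaging over $s<L$ and substituting $y'=y+sm'$ turns the main piece into $\sum_{y'}h(y')\sum_{s<L}\chi(y'-sm')\mathbf{1}_{sm'\leqslant y'<N+sm'}$. On the bulk $y'\in[(L-1)m',N)$ the indicator is identically $1$, while the complement contains at most $2Lm'$ boundary values, each contributing trivially at most $L$. Re-indexing $j=L-1-s$, $u=y'-(L-1)m'$, and extending the bulk range to $u\in[0,N)$, Lemma~\ref{arithm} gives $\sum_{u<N}|\sum_{j<L}\chi(u+jm')|\leqslant N\sqrt{m'L}$. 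Combining yields $L|S|\leqslant N\sqrt{m'L}+2L^2m'+NL\varepsilon$, and dividing by $Ln$ together with $m'=m/d$ and $\sqrt{m'/L}\leqslant\sqrt{m/L}$ recovers the claim.

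The main obstacle is recognizing that Lemma~\ref{arithm} degenerates precisely when $\chi$ has period $m'$; this forces the case split, and in the degenerate case all of the necessary cancellation must come from the arithmetic-progression averaging hypothesis rather than from oscillation of $\chi$. The rest is bookkeeping: tracking how the boundary and the rigidity error combine with the Lemma~\ref{arithm} bound to produce the three summands in the claimed estimate.
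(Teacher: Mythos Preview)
Your proof is correct and follows essentially the same approach as the paper: reduce via $f(x)=\chi(x/d)\mathbf{1}_{d\mid x}$ to a weighted character sum, split on whether $\chi$ has period $m'=m/d$, handle the periodic case with the arithmetic-progression averaging hypothesis, and handle the non-periodic case via a shift-and-average (van der Corput) argument together with Lemma~\ref{arithm}. The only differences are cosmetic---you present the periodic case first and organize the boundary bookkeeping in the non-periodic case as a bulk/complement split rather than the paper's ``shared all but $2L^2m'$ terms'' phrasing---but the underlying estimates and the final constants match.
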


\begin{proof}
    Let $\chi$ be the character of modulus $n/d$ related to $f$. Let $m'=m/d$ and $n'=n/d$, and let $g'$ be the function defined as $g'(x)=g(dx)$ for $x\in\mathbb{N}$. Since $f\in A(n', d)$ we have
    \[ \sum\limits_{x<n}g(x)f(x)=\sum\limits_{x<n'}g'(x)\chi(x).
    \]
    First assume, that $\chi$ is not periodic with period $m'$. In this case we bound
    \[
\frac{1}{n'}\left\lvert\sum\limits_{x<n'}g'(x)\chi(x)\right\rvert\leqslant \frac{1}{n'L}\abs{\sum\limits_{x<n'}\sum\limits_{i<L}g'(x+im')\chi(x)}+\frac{1}{L}\sum\limits_{t<L}\sup_{x\in\mathbb{N}}\abs{g'(x+tm')-g'(x)}.
    \]
    But for those $t$ we have
    \[
    \sup_{x\in\mathbb{N}}\abs{g'(x+tm')-g'(x)}=\sup_{x\in\mathbb{N}}\abs{g(dx+tm)-g(dx)}\leqslant \varepsilon,
    \]
    so the second average is bounded by $\varepsilon$. In the first we use the change of variables $y=x+im'$ to write
    \[
    \sum\limits_{i<L}\sum\limits_{x<n'}g'(x+im')\chi(x)=\sum\limits_{i<L}\sum\limits_{x=im'}^{n'+im'-1}g'(x)\chi(x-im').
    \]
    This sum and
    \[
    \sum\limits_{i<L}\sum\limits_{x<n'}g'(x)\chi(x-im')
    \]
    share all but
    \[
    \sum\limits_{i<L}im'<L^2m'
    \]
    terms, each of absolute value at most 1, so they differ by at most $2L^2m'$. But by Lemma \ref{arithm}
    \[
   \abs{ \sum\limits_{i<L}\sum\limits_{x<n'}g'(x)\chi(x-im')}\leqslant \sum\limits_{x<n'}\abs{\sum\limits_{i<L}\chi(x-im')}\leqslant n'\sqrt{m'L}.
    \]
    Summarizing the estimates we have
    \[
    \frac{1}{n}\left\lvert\sum\limits_{x<n}g(x)f(x)\right\rvert=\frac{1}{d}\cdot\frac{1}{n'}\left\lvert\sum\limits_{x<n'}g'(x)\chi(x)\right\rvert\leqslant \frac{1}{d}\left(\varepsilon+\frac{2L^2m'}{n'L}+\frac{n'\sqrt{m'L}}{n'L}\right),
    \]
    so the result holds in this case.

  Now assume that $\chi$ is periodic with period $m'$. Then
    \[
    \left\lvert\sum\limits_{x<n'}g'(x)\chi(x)\right\rvert\leqslant m'+\sum\limits_{t<m'}\left\lvert\sum\limits_{x<r}g'(xm'+t)\right\rvert=m'+\sum\limits_{t<m'}\left\lvert\sum\limits_{x<r}g(xm+td)\right\rvert\leqslant m'+rm'\varepsilon,
    \]
    so
 \[
  \frac{1}{n}\left\lvert\sum\limits_{x<n}g(x)f(x)\right\rvert\leqslant \frac{1}{d}\left(\frac{m}{n}+\frac{rm\varepsilon}{n}\right)\leqslant \frac{1}{d}\left(\frac{2mL}{n}+\varepsilon\right).
 \]

\end{proof}
In the application to our theorem, we will consider only small values of $d$, so the factor of $\frac{1}{d}$ in the obtained inequality will end up being replaced by 1 for simplicity.
\subsection{Main Result}

Instead of Theorem \ref{refin} we will prove the following stronger version of it.

\begin{thmalt}
Let $(X, T)$ be a totally uniquely ergodic measure preserving system. Fix $k\in\mathbb{N}$ and assume that for an increasing sequence $(q_n)$ of natural numbers we have
    \begin{equation}\label{rignew}
    \max_{t<q_n}\sup\limits_{x\in X}d\left(x, T^{tq_n}(x)\right)=o\left(1\right).\end{equation}

    Then for any $x\in X$ and any function $f\in C(X)$ we have 
    \[
    \lim_{\substack{n\to\infty\\\omega(n)\leqslant k}}\frac{1}{n}\sum\limits_{i<n}\Pow_{n}(i)\cdot f\left(T^{i}(x)\right)=\int\limits_{X}f \mathrm{d}\mu
    \]
    uniformly in $x\in X$.
    \end{thmalt}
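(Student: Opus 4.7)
The plan is to decompose $\Pow_n$ using Lemma \ref{approx} into a linear combination of scaled characters, and to control each resulting character sum using Lemma \ref{scalchar}. By subtracting a constant we may assume $\int_X f\,d\mu = 0$; fixing $x\in X$ and setting $g(i) = f(T^i x)$, we aim to show $\tfrac{1}{n}\bigl|\sum_{i<n}\Pow_n(i)g(i)\bigr|\to 0$ uniformly in $x$ as $n\to\infty$ along integers with $\omega(n)\leqslant k$. For each such $n$ we select a rigidity time $q_j = q_{j(n)}$ from the given sequence, chosen to grow to infinity with $n$ while remaining sublinear in $n$; such a selection exists because $(q_n)$ is a fixed increasing sequence and $n\to\infty$.

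Applying Lemma \ref{approx} with a suitable divisor $d\mid n$ produces an approximation $h = \sum_{s\in S}a_s f_s$ with $|S|\leqslant (2C)^k d^{C+1}$ and each $f_s\in A(n/d_s, d_s)$ for some $d_s\mid d$. Here the assumption $\omega(n)\leqslant k$ is essential, as it replaces the $\omega(n)$ in the bound on $|S|$ by a constant, preventing the number of terms from growing with the prime count of $n$. The approximation error $\sum_i|\Pow_n(i)-h(i)|\leqslant n\sum_{p\mid n,\,\nu_p(n)>\nu_p(d)}p^{-\nu_p(d)}$ is made $o(n)$ by choosing $d$ so that every prime factor of $n$ appearing in the residual sum contributes to $d$ with high enough multiplicity.

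For each $f_s$ we apply Lemma \ref{scalchar} with $m$ a common multiple of $d_s$ (required by the hypothesis $d_s\mid m$) and the rigidity scale $q_j$. The AP-average hypothesis in the lemma follows from total unique ergodicity: writing $g(um+t) = f(T^{um}(T^t x))$, we have $\tfrac{1}{r}\sum_{u<r}f(T^{um}y)\to \int f\,d\mu = 0$ uniformly in $y\in X$ because $(X, T^m)$ is uniquely ergodic. The rigidity hypothesis $\max_{t<L}\sup_{u\in\mathbb{N}}|g(u+tm)-g(u)|\leqslant\varepsilon$ is obtained from the given $\max_{t<q_j}\sup_{y\in X}d(y, T^{tq_j}y)\to 0$, extended by iterating the triangle inequality so that $L$ can be taken substantially larger than $m = q_j$ at the cost of a controlled factor in $\varepsilon$.

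Combining these per-character estimates and optimizing $d$, $q_j$, and $L$ should give an aggregate bound of the shape $|S|\cdot(\text{per-character estimate}) + (\text{approximation error})$ tending to $0$ as $n\to\infty$. The main obstacle is the parameter balance: the estimate $\tfrac{1}{d_s}\bigl(\sqrt{m/L}+2mL/n+\varepsilon\bigr)$ requires $L\gg m$ to shrink $\sqrt{m/L}$, whereas the native rigidity only affords $L\approx q_j = m$; pushing $L$ further by iterated rigidity must then be carefully coordinated with the choice of $d$, which simultaneously controls $|S|$ and the approximation error, so that the product $|S|\cdot(\text{individual bound})$ still vanishes uniformly in $x$.
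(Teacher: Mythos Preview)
Your overall architecture is exactly that of the paper: reduce to $\int f\,d\mu=0$, decompose $\Pow_n$ via Lemma~\ref{approx}, and handle each scaled character by Lemma~\ref{scalchar} using total unique ergodicity for the arithmetic-progression averages and the rigidity hypothesis for the oscillation bound. What is missing is the concrete device that resolves the parameter balance you flag at the end.

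The paper does \emph{not} let the divisor $d$ or the rigidity time depend on $n$. Instead it runs an $\varepsilon$-argument: fix $\varepsilon>0$, set
\[
M=\left\lceil\varepsilon^{-1}\right\rceil\Bigl(\prod_{p<\varepsilon^{-1}}p\Bigr)^{\lceil\varepsilon^{-1}\rceil},
\]
and take $d=\gcd(n,M)$. This single choice does three things at once. First, $d\leqslant M$ is bounded independently of $n$, so $|S|\leqslant (2C)^k M^{C+1}$ is a fixed constant. Second, the approximation error in Lemma~\ref{approx} involves only primes $p\mid n$ with $\nu_p(n)>\nu_p(M)$, forcing either $p>\varepsilon^{-1}$ or $\nu_p(n)>\varepsilon^{-1}$; either way each contributes at most $\varepsilon$, giving total error $\leqslant 2k\varepsilon$ since $\omega(n)\leqslant k$. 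Third, every $d_s$ divides $M$, so the single choice $m=Mq_j$ (with one fixed rigidity index $j$, chosen once $\varepsilon$ and $M$ are fixed) works uniformly for all characters in the decomposition and satisfies $d_s\mid m$. With $L=M^{2C+4}q_j'$ one gets $\sqrt{m/L}=M^{-(C+2)}$, and the rigidity is stretched to this $L$ by the triangle inequality at the cost of a factor $M^{2C+6}$, absorbed into the $o(1)$ of (\ref{rignew}). Your proposal to let $d$ and $q_j$ vary with $n$ risks $|S|$ growing unboundedly and makes the uniformity in $x$ harder to track; the fixed-$M$ trick is the missing ingredient that closes the argument.
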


    We first show that it is indeed stronger.

\begin{lem}
    Theorem \ref{refin}' implies Theorem \ref{refin}.
\end{lem}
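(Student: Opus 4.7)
The plan is to rewrite the cubes sum from Theorem \ref{refin} as the $\Pow_{q_n}$-weighted sum appearing in Theorem \ref{refin}', up to an error that vanishes by the rigidity hypothesis, and then invoke Theorem \ref{refin}' directly along the subsequence $(q_n)$.

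First, I would verify that Theorem \ref{refin}' applies with the same sequence $(q_n)$. Since $C \geqslant 2$, the rigidity $\max_{t<q_n^{C-1}}\sup_x d(x,T^{tq_n}x)=o(1)$ assumed in Theorem \ref{refin} is a priori stronger than the condition $\max_{t<q_n}\sup_x d(x,T^{tq_n}x)=o(1)$ required by Theorem \ref{refin}'. The hypothesis $\omega(q_n)\leqslant k$ ensures that $n=q_n$ is an admissible value in the limit of Theorem \ref{refin}', so along this subsequence
\[
\frac{1}{q_n}\sum_{m<q_n}\Pow_{q_n}(m)\,f(T^m x)\longrightarrow \int_X f\,d\mu
\]
uniformly in $x \in X$.

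Next, I would bridge the two averages. For each $0\leqslant i<q_n$ write $i^C=j_i q_n+r_i$ with $r_i=i^C\bmod q_n$ and $j_i=\lfloor i^C/q_n\rfloor<q_n^{C-1}$. Then $T^{i^C}x=T^{j_i q_n}(T^{r_i}x)$, so the stronger rigidity of Theorem \ref{refin} yields
\[
d(T^{i^C}x,T^{r_i}x)\leqslant \sup_{y\in X}d(y,T^{j_i q_n}y)\leqslant \max_{t<q_n^{C-1}}\sup_{y\in X}d(y,T^{tq_n}y)=o(1)
\]
uniformly in $i<q_n$ and $x\in X$. Uniform continuity of $f$ (applicable since $X$ is compact) upgrades this to $|f(T^{i^C}x)-f(T^{r_i}x)|=o(1)$ uniformly, and reindexing by the residue $m=r_i$ — noting that $|\{i<q_n:i^C\equiv m\!\pmod{q_n}\}|=\Pow_{q_n}(m)$, since $i=0$ and $i=q_n$ give the same residue — gives
\[
\frac{1}{q_n}\sum_{i<q_n}f(T^{i^C}x)=\frac{1}{q_n}\sum_{m<q_n}\Pow_{q_n}(m)\,f(T^m x)+o(1).
\]
Combining with the previous step yields the conclusion of Theorem \ref{refin}.

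There is no real obstacle here: the argument is a short bookkeeping reduction whose only content is the observation that the quotient $\lfloor i^C/q_n\rfloor$ never exceeds $q_n^{C-1}$, which is precisely the range controlled by the stronger rigidity of Theorem \ref{refin}. This is exactly what justifies trading the indexing by $i^C$ for the $\Pow_{q_n}$-weighted indexing by residues, and the implication follows immediately.
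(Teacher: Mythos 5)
Your proposal is correct and follows essentially the same route as the paper: invoke Theorem \ref{refin}' along $n=q_n$ (admissible since $\omega(q_n)\leqslant k$ and the rigidity \eqref{rigweak} with $C\geqslant 2$ implies \eqref{rignew}), then use the rigidity plus uniform continuity of $f$ to replace $f(T^{i^C}x)$ by $f(T^{\overline{i^C}}x)$ and reindex by residues to obtain the $\Pow_{q_n}$-weighted average, with all estimates uniform in $x$. The only difference is cosmetic bookkeeping (your explicit remark about the representatives $[1,q_n]$ versus $[0,q_n)$), so no further comment is needed.
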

\begin{proof}
Let $(X, T), C, k, (q_n), f$ be as in Theorem \ref{refin}. We can apply Theorem $\ref{refin}'$, obtaining
\[
\lim_{n\to\infty}\frac{1}{q_n}\sum\limits_{i<q_n}\Pow_{q_n}(i)\cdot f\left(T^{i}(x)\right)=\lim_{\substack{n\to\infty\\\omega(n)\leqslant k}}\frac{1}{n}\sum\limits_{i<n}\Pow_{n}(i)\cdot f\left(T^{i}(x)\right)=\int\limits_{X}f \mathrm{d}\mu
\]
uniformly in $x\in X$. Now fix $x\in X$ and let $g(i)=f(T^{i}(x))$ for $i\in\mathbb{N}$. By (\ref{rigweak}) and uniform continuity of $f$ we have
\[
\sup_{i\in\mathbb{N}}\max_{t<q_n^{C-1}}\abs{g(i+tq_n)-g(i)}=o(1).
\]
If by $\overline{a}$ we denote the residue modulo $q_n$ of $a$, then by (\ref{rigweak}) we have
\[
\max_{i<q_n}\abs{g\left(i^C\right)-g\left(\overline{i^C}\right)}=o(1),
\]
since for such $i$ we have $i^C<q_n\cdot q_n^{C-1}$ and $q_n|i^C-\overline{i^C}$. Then
\[
\frac{1}{q_n}\sum\limits_{i<q_n}g\left(i^C\right)=o(1)+\frac{1}{q_n}\sum\limits_{i<q_n}g\left(\overline{i^C}\right)=o(1)+\frac{1}{q_n}\sum\limits_{i<q_n}\Pow_{q_n}(i)g(i),
\]
so
\[
\lim\limits_{n\to\infty}\frac{1}{q_n}\sum\limits_{i<q_n}g\left(i^C\right)=\int_{X}f d\mu.
\]
The estimates on $g$ we used were uniform in $x$, so the above convergence is as well.

\end{proof}

\begin{proof}[Proof of Theorem \ref{refin}']
    Subtracting a constant from $f$ we can assume that its integral over $X$ is 0, and by scaling we can assume that the image of $f$ lies in the unit disk. Also, the result for $Re(f)$ and $Im(f)$ implies the result for $f$, so we can assume that  $f:X\to[-1, 1]$.  
    
    Fix an $\varepsilon>0$, and let 
    \[
    M=\left\lceil\varepsilon^{-1}\right\rceil\left(\prod_{p<\varepsilon^{-1}}p\right)^{\left\lceil\varepsilon^{-1}\right\rceil}
    \]
    and $q_n'=Mq_n$ for all $n\in\mathbb{N}$. By the same reasoning as at the beginning of the proof of Theorem \ref{strrigid} we obtain from (\ref{rignew}) that
    \[
    \lim\limits_{n\to\infty}\max_{t<M^{2C+6}q_n}\,\,\sup\limits_{x\in X}d\left(x, T^{tq_n}(x)\right)\leqslant \lim\limits_{n\to\infty}M^{2C+6}\max_{t<q_n}\sup\limits_{x\in X}d\left(x, T^{tq_n}(x)\right)=0,
    \]
    so if we fix $x\in X$ and let $g(i)=f\left(T^{i}(x)\right)$, we have
    \[
    \lim\limits_{n\to\infty}\max_{t<M^{2C+4}q_n
'}\,\,\sup\limits_{r\in \mathbb{N}}\abs{g(r+tq_n')-g(x)}=0.
    \]
    Therefore for some $n$ we have
    \begin{equation}\label{epsone}
   \max_{t<M^{2C+4}q_n
'}\,\,\sup\limits_{r\in \mathbb{N}}\abs{g(r+tq_n')-g(x)}<\frac{\varepsilon}{(2C)^k\cdot M^{C+1}}.
    \end{equation}
    Now for any $N$ with $\omega(N)\leqslant k$ we can apply Lemma \ref{approx} to numbers $\gcd(N, M)|N$. Any prime $p$ for which $\nu_p(\gcd(M, N))<\nu_p(N)$ must satisfy $\nu_p(N)>\nu_p(M)$, so either $\nu_p(M)>\varepsilon^{-1}$ or $p>\varepsilon^{-1}$. Therefore the lemma gives
    \begin{equation}\label{lemapp}
    \frac{1}{N}\sum\limits_{i<N}\abs{\Pow_N(i)-h(i)}\leqslant \sum_{\substack{p|N\\\nu_p(N)>\nu_p(M)}}p^{-\nu_p(M)}\leqslant k\varepsilon+\frac{k}{2^{\varepsilon^{-1}}}<2k\varepsilon,
    \end{equation}
   where $h=\sum_{s\in S}a_sf_s$ as in Lemma \ref{approx}.
   By total unique ergodicity of $(X, T)$ we have
   \begin{equation}\label{epstwo}
   \max_{r\in\mathbb{N}}\frac{q_n'}{N}\left\lvert\sum\limits_{t<N/q_n'}g(tq_n'+r)\right\rvert\leqslant \frac{\varepsilon}{(2C)^k\cdot M^{C+1}}
    \end{equation}
    for large $N$, and then we can apply Lemma \ref{scalchar} to $n=N, m=q_n', L=M^{2C+4}q_n'$ and $d=d_s$, with $f=f_s$, obtaining
    \[
     \frac{1}{N}\left\lvert\sum\limits_{i<N}g(i)f_s(i)\right\rvert\leqslant \sqrt{\frac{1}{M^{2C+4}}}+\frac{2M^{2C+4}(q_n')^2}{N}+\frac{\varepsilon}{(2C)^k\cdot M^{C+1}},
    \]
    since (\ref{epsone}) and (\ref{epstwo}) are satisfied.
    For large $N$ this is bounded by 
    \[
    \sqrt{\frac{1}{M^{2C+4}}}+\frac{2\varepsilon}{(2C)^k\cdot M^{C+1}},
    \]
    and so since $|S|\leqslant (2C)^k\cdot M^{C+1}$ we get
    \[
     \frac{1}{N}\left\lvert\sum\limits_{i<N}g(i)h(i)\right\rvert\leqslant\sum\limits_{s\in S}  \frac{1}{N}\left\lvert\sum\limits_{i<N}g(i)a_sf_s(i)\right\rvert\leqslant \frac{2C^k}{M}+2\varepsilon\leqslant 4C^k\varepsilon,
    \]
    since $M>\varepsilon^{-1}$. Finally by (\ref{lemapp}) we get
    \[
     \frac{1}{N}\left\lvert\sum\limits_{i<N}g(i)\Pow_N(i)\right\rvert \leqslant \frac{1}{N}\left\lvert\sum\limits_{i<N}g(i)h(i)\right\rvert+2k\varepsilon<8kC^k\varepsilon
    \]
    for large $N$ satisfying $\omega(N)\leqslant k$. Since $\varepsilon$ was arbitrary, we get
    \[
      \lim\limits_{\substack{n\to\infty \\ \omega(n)\leqslant k}}\frac{1}{N}\left\lvert\sum\limits_{i<N}g(i)\Pow_N(i)\right\rvert=0.
    \]
    All the estimates we used were uniform in the choice of $x\in X$, so the above convergence is as well.
    
    \end{proof}

   \subsection{Counterexamples}
   
   In this section we wish to argue the strength of Theorem \ref{refin} by providing counterexamples to its potential strengthenings. We still treat $C\geqslant 2$ as a fixed integer.

   The counterexamples in this section are adapted from the one provided in \cite{PNT}. Because of this, instead of the full proof of the next lemma, we only highlight the necessary modifications to the proof of Lemma 11.6 in \cite{PNT}. 

   First of all, instead of a single set $A$ we consider a sequence of finite sets $A_n$:

   \begin{defn}
       We call a sequence $(A_n)$ of sets of integers \textit{sparse} if 
       \[
       \lim_{n\to\infty}\min_{\substack{x, y\in A_n\\x\ne y}}|x-y|=\infty.
       \]
       
   \end{defn}

   We also need the following
   \begin{defn}
       We say a set $B\subset\mathbb{N}$ is $C$-large if $B\cap (a\mathbb{N}+r)\cap(C\mathbb{N}+1)$ is nonempty for any positive integers $a, r$ whenever $\gcd(a, rC)=1$.
   \end{defn}

   \begin{lem}\label{counter}
       Let $(A_n)$ be a sparse sequence of sets of positive integers, $B_1, B_2$ be $C$-large, and $h:\mathbb{N}\to\mathbb{R}^+$. Then there exists a function $g\in C(\mathbb{T})$ and an irrational number $\alpha$ with denominators $(q_n)$, such that there is an increasing sequence $(t_n)$ of positive integers such that:
       \begin{itemize}
       \item $T_{\alpha, g}$ is totally uniquely ergodic
       
        \item for any $\varepsilon>0$ for large enough $n$ we have that for all $k\in A_{q_{t_{2n}}}\cap [q_{t_{2n}}/3^C, q_{t_{2n}}]$ 
        \[
           S_k(g)(0)\in (-\varepsilon, \varepsilon)\quad\text{if $n$ is even}
        \]
        and
        \[
           S_k(g)(0)\in \left(\frac{1}{2}-\varepsilon, \frac{1}{2}+\varepsilon\right)\quad\text{if $n$ is odd}
        \]
        \item $\sup\limits_{x\in\mathbb{T}}\norm{S_{q_{t_{2n+1}}}(g)(x)}=o(h(n)).$
        \item $q_{t_{2n}}\in B_1$ and $q_{t_{2n+1}}\in B_2$ for all $n$.
        \item each two terms of $(q_{t_n})$ are coprime.
           
       \end{itemize}
   \end{lem}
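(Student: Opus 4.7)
The plan is to follow the strategy of the proof of Lemma 11.6 in \cite{PNT}, constructing $\alpha$ and $g$ by a simultaneous induction. I would build the continued fraction expansion of $\alpha$ partial quotient by partial quotient, and realize $g$ as a lacunary trigonometric series
\[
g(x)=\sum_{k\geqslant 1} a_k \cos(2\pi q_{t_k} x)
\]
with coefficients $a_k$ chosen so that $g$ is continuous and the partial sums $S_k(g)(0)$ land in the prescribed intervals. The main running parameters are the sequence $(t_n)$ of selected indices, the size of the partial quotient of $\alpha$ following each $q_{t_n}$, and the coefficient $a_{t_n}$.

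At stage $n$, assuming everything is fixed up through $t_{n-1}$, I would do two things. First, I extend the continued fraction of $\alpha$ to specify $q_{t_n}$: I pick the next several partial quotients so that $q_{t_n}$ is arbitrarily large, and then use the Chinese remainder theorem together with Dirichlet's theorem on primes in arithmetic progressions to arrange that $q_{t_n}\in B_1$ (if $n$ is even) or $q_{t_n}\in B_2$ (if $n$ is odd) and that $q_{t_n}$ is coprime to all previously chosen $q_{t_j}$. The $C$-largeness hypothesis on $B_1,B_2$ is exactly what guarantees this: for any coprime modulus $a$ (e.g.\ a product of previously chosen $q_{t_j}$) and any admissible residue $r$ one can realize $q_{t_n}$ in $a\mathbb{N}+r$. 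Second, I pick the Fourier coefficient $a_{t_n}$. For $n=2m+1$ I take $|a_{t_n}|$ small enough that the partial-sum estimate from the proof of Lemma \ref{gcond} yields $\sup_x\norm{S_{q_{t_n}}(g)(x)}=o(h(m))$. For $n=2m$ I instead steer the partial sums $S_k(g)(0)$ for $k\in A_{q_{t_n}}\cap[q_{t_n}/3^C,q_{t_n}]$ to cluster near $0$ (when $m$ is even) or near $1/2$ (when $m$ is odd); sparseness of $A_{q_{t_n}}$, together with the lacunary gap between successive frequencies $q_{t_k}$, makes these partial sums essentially depend only on the previously chosen low-frequency terms, whose combined contribution has already been arranged to hit the desired target.

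Total unique ergodicity would then follow from the cohomological-obstruction argument used in Lemma \ref{gcond}: provided the tail of the $a_k$'s decays sufficiently quickly relative to $q_{t_k}$, the equation $rg(x)\equiv \psi(x+j\alpha)-\psi(x)\pmod 1$ has no measurable solution for any $r\in\mathbb{Z}\setminus\{0\}$ and any $j\in\mathbb{Z}^+$, so by Fact \ref{meascond2} applied to each power $T^{j}_{\alpha,g}$ the system is totally uniquely ergodic. One simply imposes this decay as an additional constraint on $a_{t_n}$ at every step, which is compatible with the other constraints since they only ever require $a_{t_n}$ to be \emph{not too small}.

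The main technical obstacle, and the point where genuine modifications of the argument in \cite{PNT} are required, is simultaneously driving the partial sum to the \emph{nontrivial} target value $1/2$ uniformly across the whole sparse set $A_{q_{t_{2m}}}$ for odd $m$, while still keeping $\sup_x\norm{S_{q_{t_{2m+1}}}(g)(x)}=o(h(m))$. This forces a delicate trade-off: the contribution of $a_{t_{2m+1}}$ must shift the partial sums at scale $q_{t_{2m+2}}$ by approximately $1/2\pmod 1$, yet be small enough that its own contribution to $S_{q_{t_{2m+1}}}(g)$ is negligible. Sparseness of $A_{q_{t_{2m+2}}}$ together with a lower bound on the gaps between distinct elements (quantified in terms of $q_{t_{2m+1}}$) is precisely what allows this trade-off to be balanced, and the technical content of the modification consists in spelling out how sparsely each $A_n$ must be spaced relative to the growth rate of the continued-fraction denominators for the construction to close up.
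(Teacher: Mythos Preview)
Your overall strategy---follow \cite{PNT} and build $\alpha$ and $g$ inductively---is right, but the specific ansatz $g(x)=\sum_k a_k\cos(2\pi q_{t_k}x)$ cannot deliver the second bullet of the lemma, and this is the heart of the construction. For $k$ in the window $[q_{t_{2m}}/3^C,q_{t_{2m}}]$ the contribution of the term with frequency $q_{t_{2m}}$ to $S_k(g)(0)$ is essentially $a_{2m}\cdot k$, which \emph{varies linearly in $k$} over an interval of length comparable to $q_{t_{2m}}$; the lower-frequency terms contribute $O(|a_j|q_{t_j+1})$ each and can be made negligible, while higher-frequency terms are also negligible. So the only way to make $S_k(g)(0)$ close to a single value for all $k$ in a long window is to take $a_{2m}$ essentially $0$, which pins that value to $0$ and rules out the target $1/2$ for odd $m$. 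Your suggestion to use the odd-indexed coefficient $a_{t_{2m+1}}$ to shift by $1/2$ does not work either: for $k\gg q_{t_{2m+1}+1}$ the partial sum $S_k(a_{2m+1}\cos(2\pi q_{t_{2m+1}}\cdot))(0)$ \emph{oscillates} in $k$ with amplitude $\approx |a_{2m+1}|q_{t_{2m+1}+1}$ and period $\approx q_{t_{2m+1}+1}$, so it cannot equal $1/2$ across a sparse set whose elements are spread over $[q_{t_{2m+2}}/3^C,q_{t_{2m+2}}]$. Sparseness of $A_n$ is irrelevant here; the obstruction is that a single cosine contributes either a linearly growing or an oscillating term to $S_k(\cdot)(0)$, never a nonzero constant.

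The paper (following \cite{PNT}) avoids this by taking $g$ of the form $\sum_m\bigl(f_m(x+\alpha)-f_m(x)+h_m(x+\alpha)-h_m(x)\bigr)$, i.e.\ a sum of approximate coboundaries; then $S_k(g)(0)$ telescopes to $\sum_m\bigl(f_m(k\alpha)-f_m(0)+h_m(k\alpha)-h_m(0)\bigr)$, and one can design each $f_m$ to be close to $0$ or $1/2$ on the finite set $\{k\alpha:k\in A_{q_{t_{2m}}}\cap[q_{t_{2m}}/3^C,q_{t_{2m}}]\}$. A second difference: total unique ergodicity in the paper is \emph{not} obtained via the cohomological criterion of Lemma~\ref{gcond} (which in any case requires a \emph{lower} bound on infinitely many coefficients, not the decay you describe), but by introducing auxiliary indices $l_n$ with $t_{2n}<l_n<t_{2n+1}$ and arranging that $S_{\widetilde{K_n}q_{l_n}}(g)$ is bounded away from every constant in $L^1$; making $\widetilde{K_n}$ divisible by $n$ upgrades ergodicity to total unique ergodicity. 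Finally, note that $T_{\alpha,g}^j$ is the skew product over rotation by $j\alpha$ with cocycle $S_j(g)$, not $g$, so the equation you wrote for Fact~\ref{meascond2} is not the relevant one.
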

\begin{proof}
    We first take $\alpha$ such that $q_{n+1}\geqslant 2^nq_n$ and such that the sequence $(q_n)$ contains infinitely many elements of $B_1$ and $B_2$. This can be done by the following fact:
    \begin{fact}
        An nondecreasing sequence of positive integers $(q_n)$ is a sequence of denominators of some irrational $\alpha$ if and only if it satisfies the congruences
        \[
        q_{n+2}\equiv q_n\pmod{q_{n+1}}
        \]
        for $n\geqslant 0$, where we take $q_0=1$.
    \end{fact}
    Indeed, we can define $(q_n)$ recursively, making sure that each $q_n$ is congruent to 1 modulo $C$ and coprime with all previous $q_i$, and by $C$-largeness letting $q_{2n}\in B_1$ and $q_{2n+1}\in B_2$ for all $n$.

    We will now recursively define the sequence $t_n$ together with a sequence $l_n$ satisfying
    \[
    t_{2n}<l_n<t_{2n+1}
    \]
    for all $n$.

    We pick the terms $t_{2n}$ as in \cite{PNT}. There they only have to be large enough compared to the previously chosen terms, so we can ensure $q_{t_{2n}}\in B_1$ by the choice of $\alpha$.

    The sequence $(l_n)$ plays the same role as in subsection 11.2 of \cite{PNT}. To extend to total unique ergodicity, we set
    \[
    \widetilde{K_n}=K_n-(K_n \pmod{n}),
    \]
    to make it divisible by $n$. By our choice of $\alpha$ we have $K_n\geqslant 2^n$, so this change is insignificant enough to make $\widetilde{K_n}q_{l_n}\alpha$ still "close" to $\frac{1}{2q_{l_n}}$, so that in the end $S_{\widetilde{K_n}q_{l_n}}(g)$ is not close to any constant in measure. Since now any positive integer has infinitely many multiples in the sequence $(\widetilde{K_n})$, we get total unique ergodicity.

    To choose $q_{t_{2n+1}}$, we simply notice that by the calculations from Section 11.2 of \cite{PNT} we can make it so large in terms of the already constructed objects, that
    \[
     \sup\limits_{x\in \mathbb{T}}S_{q_{t_{2n+1}}}\left(\sum_{m\leqslant n} \left(f_m(x+\alpha)-f_m(x)+h_m(x+\alpha)-h_m(x)\right)\right)=o(h(n)),
    \]
    and that we can pick the next values of $(t_n)$ and $(l_n)$ so large that in the end
    \[
    \sup\limits_{x\in \mathbb{T}}S_{q_{t_{2n+1}}}(g)(x)=o(h(n)).
    \]
    Again, by our choice of $\alpha$ we can ensure that $q_{t_{2n+1}}\in B_2$.
    
\end{proof}

   We now show, that there is no hope of obtaining a variant of Theorem \ref{refin} proving equidistribution of the orbits (as opposed to equidistribution along a subsequence).

   \begin{thm}
       For any function $h:\mathbb{Z}^+\to\mathbb{R}^+$ there exists a totally uniquely ergodic dynamical system $(X, T)$ and a sequence $(q_n)$ of primes such that
       \[
       \sup_{x\in X}d(x, T^{q_n}(x))=o(h(q_n)),
       \]
       but the limit
       \[
       \lim\limits_{n\to\infty}\frac{1}{n}\sum\limits_{i<n}f\left(T^{i^C}(x)\right)
       \]
       doesn't exist for some function $f\in C(X)$ and point $x\in X$.
   \end{thm}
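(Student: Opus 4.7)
The plan is to feed Lemma \ref{counter} a sparse sequence built from $C$-th powers together with two $C$-large sets of primes, and then use the oscillating behaviour encoded in its second bullet to produce two subsequences of $N$ along which the averages $\tfrac1N\sum_{i<N}f(T^{i^C}x)$ tend to distinct limits.

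First I fix the ingredients. Set $A_N=\{i^C:i\in[N^{1/C}/3,\,N^{1/C}]\}$, whose consecutive gaps are at least $C(N^{1/C}/3)^{C-1}\to\infty$, so $(A_N)$ is sparse. Let $B_1=B_2$ be the set of primes $p\equiv 1\pmod C$. For $a,r$ with $\gcd(a,rC)=1$, the Chinese remainder theorem produces a residue class modulo $aC$ congruent to $1\pmod C$ and to $r\pmod a$; this class is coprime to $aC$, so Dirichlet's theorem supplies a prime in it. Hence $B_1,B_2$ are $C$-large and consist of primes.

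I then apply Lemma \ref{counter} with these data and an auxiliary rigidity function $\tilde h$, and set $X:=\mathbb{T}^2$, $T:=T_{\alpha,g}$, $q_n:=q_{t_{2n+1}}$. The first bullet of the lemma gives total unique ergodicity, and each $q_n$ is prime because $q_n\in B_2$. Since $T^{q_n}(x,y)=(x+q_n\alpha,\,y+S_{q_n}(g)(x))$, combining the third and fifth bullets gives
\[
\sup_{(x,y)\in\mathbb{T}^2}d\bigl((x,y),\,T^{q_n}(x,y)\bigr)\;\le\;\|q_n\alpha\|+\sup_x\|S_{q_n}(g)(x)\|\;\le\;\frac{1}{q_{t_{2n+1}+1}}+o\bigl(\tilde h(n)\bigr).
\]
Since the recursive construction inside the proof of Lemma \ref{counter} lets me pick $q_{t_{2n+1}}$ arbitrarily large before adding the next Fourier correction to $g$, at each stage I first choose $q_{t_{2n+1}}$ and then choose $\tilde h(n)$ small enough that the right-hand side is $o(h(q_n))$, which is where the arbitrary $h$ from the statement enters.

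Finally, I disprove convergence by taking $x_0=(0,0)$ and $f(x,y)=\cos(2\pi y)$, so $f(T^{i^C}x_0)=\cos(2\pi S_{i^C}(g)(0))$, and setting $N_m:=\lfloor q_{t_{2m}}^{1/C}\rfloor$. By the definition of $A_N$, every $i\in[N_m/3,N_m]$ has $i^C\in A_{q_{t_{2m}}}\cap[q_{t_{2m}}/3^C,q_{t_{2m}}]$, so the second bullet combined with uniform continuity of $\cos$ places $f(T^{i^C}x_0)$ within $o(1)$ of $+1$ for even $m$ and of $-1$ for odd $m$. Splitting the sum at $N_m/3$, the lower third contributes at most $\tfrac13$ in absolute value and the upper two-thirds contribute $\pm\tfrac23+o(1)$, so along $N_{2m}$ the averages are at least $\tfrac13-o(1)$ while along $N_{2m+1}$ they are at most $-\tfrac13+o(1)$. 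The main obstacle is the book-keeping in the previous paragraph: Lemma \ref{counter} as stated bounds $\sup_x\|S_{q_{t_{2n+1}}}(g)\|$ in terms of the \emph{index} $n$, whereas we need a bound in terms of the \emph{value} $q_n$ for an arbitrary $h$, which forces us to re-enter the construction of Lemma \ref{counter} rather than invoke it as a black box.
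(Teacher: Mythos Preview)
Your proposal is correct and follows essentially the same route as the paper: apply Lemma~\ref{counter} with $B_1=B_2$ a $C$-large set of primes and $A_n$ built from $C$-th powers, use the odd-index $q_{t_{2n+1}}$ for the rigidity sequence and the even-index $q_{t_{2n}}$ to produce oscillating averages via a function depending only on the second coordinate. The cosmetic differences (your $\cos(2\pi y)$ versus the paper's generic $\tilde f$ with $\tilde f(0)=0$, $\tilde f(1/2)=1$; your primes $\equiv 1\pmod C$ versus the paper's all primes) are immaterial. The book-keeping obstacle you flag---that Lemma~\ref{counter} controls $S_{q_{t_{2n+1}}}(g)$ in terms of the index $n$ rather than the value $q_{t_{2n+1}}$---is real, and the paper simply asserts ``We clearly get the appropriate rigidity rate'' without comment; your observation that one must re-enter the recursive construction is the honest way to handle it.
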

   \begin{proof}
       We use Lemma \ref{counter} to find the appropriate skew product system, letting $B_1=B_2$ be the set of prime numbers (which is $C$-large by Dirichlet's theorem) and letting $A_n$ be the set of all $C$-th powers larger than $n/3^C$. We clearly get the appropriate rigidity rate.
       
       Let us take $k_n=q_{t_{2n}}$, and let $f(x, y)=\tilde{f}(y)$ for any continuous function $\tilde{f}:\mathbb{T}\to[0, 1]$ satisfying $\tilde{f}(0)=0$ and $\tilde{f}(1/2)=1$. Then for even $n$ we have
       \[
       \frac{1}{k_n}\sum\limits_{i<k_n}f\left(T^{i^C}(0, 0)\right)\leqslant \frac{1}{3}+\frac{1}{k_n}\sum\limits_{k_n/3<i<k_n}\tilde{f}\left(S_{i^C}(g)(0)\right)=\frac{1}{3}+o(1),
       \]
       but similarly for odd $n$ the average is at least $2/3+o(1)$, so the sequence of averages cannot converge.
       
   \end{proof}
   \begin{rem}
       Notice, that in this system in particular every  orbit along $C$-th powers is equidistributed along a subsequence (and in particular dense), but not every such orbit is equidistributed.
   \end{rem}
    We now discuss the condition of boundedness of $\omega(q_n)$ in Theorem \ref{refin}. We are not able to prove that this condition is necessary for the theorem to hold; however, in Theorem \ref{counterstr} we will show that the condition is necessary in Theorem \ref{refin}', which constituted the main part of the proof of Theorem \ref{refin}. Because of this, we believe that the condition is indeed necessary also in Theorem \ref{refin}.

   We first need the following
    \begin{lem}\label{sparsing}
       Let $n$ be a squarefree positive integer, whose prime divisors are all congruent to 1 modulo $C$. Let $A$ be the set of residues modulo $n$ of $C$-th powers coprime with $n$. Then there exists a set $A'\subset A$ such that
        \begin{itemize}
            \item $|A'|\leqslant |A|\cdot\omega(n)\cdot (7/8)^{\omega(n)/3-4C^4}$
            \item any two elements of $A\setminus A'$ differ by at least $\omega(n)$.
        \end{itemize}
    \end{lem}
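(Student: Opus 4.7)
The plan is to decompose $A$ prime by prime via the Chinese remainder theorem, then apply Theorem \ref{keith} at each prime to show that the probability that both $x$ and $x+i$ are $C$-th powers modulo $p$ is at most $7/8$ of the probability that $x$ alone is, for most primes $p \mid n$. Multiplying these local savings across the prime factors of $n$ gives a strong bound on the correlation $N(i) = \#\{x \in A : x + i \in A\}$, which is enough to control the set of ``close pairs''.

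Write $n = p_1 \cdots p_k$ with $k = \omega(n)$ and each $p_j \equiv 1 \pmod{C}$. Since $(\mathbb{F}_{p_j}^{*})^{C}$ has index $C$ in $\mathbb{F}_{p_j}^{*}$, CRT gives $|A| = \prod_{j} (p_j-1)/C$. Set
\[
A' = \{x \in A : \exists\, y \in A \text{ with } 0 < |x-y| < \omega(n)\},
\]
so $A \setminus A'$ automatically has the required separation, and observe
\[
|A'| \leq \sum_{1 \leq |i| < \omega(n)} N(i), \qquad N(i) = \prod_{p \mid n} N_p(i),
\]
where $N_p(i) = \#\{x \in \mathbb{F}_p^{*} : x, x+i \in (\mathbb{F}_p^{*})^{C}\}$ by another application of CRT.

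The local analysis is the heart of the argument. Let $\chi$ be a character of $\mathbb{F}_p^{*}$ of order $C$; its kernel is exactly $(\mathbb{F}_p^{*})^{C}$. Theorem \ref{keith} with $\varepsilon_1 = \varepsilon_2 = 1$ gives, for $p \nmid i$,
\[
\left|N_p(i) - \frac{p-1}{C^{2}}\right| \leq \sqrt{p} + O(1), \qquad \text{hence}\qquad \frac{N_p(i)}{(p-1)/C} \leq \frac{1}{C} + O\!\left(\frac{1}{\sqrt{p}}\right).
\]
A short computation shows this ratio is at most $7/8$ once $p \geq c_0 C^{2}$ for some absolute $c_0$ (e.g.\ $c_0 = 10$ works). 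For $p \mid i$ the ratio is identically $1$, and for the remaining small primes ($p < c_0 C^{2}$) we bound it trivially by $1$.

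Fix $i$ with $1 \leq |i| < \omega(n)$. The primes $p \mid n$ failing to yield a $7/8$-saving split into small primes ($p < c_0 C^{2}$, at most $c_0 C^{2}$ of them) and primes dividing $i$ (at most $\omega(i) \leq \log_{2} \omega(n)$ of them). The remaining at least $\omega(n) - c_0 C^{2} - \log_{2} \omega(n)$ primes each contribute a factor $\leq 7/8$, so
\[
\frac{N(i)}{|A|} \leq \left(\frac{7}{8}\right)^{\omega(n) - c_0 C^{2} - \log_{2} \omega(n)}.
\]
Summing over the at most $2\omega(n)$ admissible shifts $i$, and absorbing the factor $2$, the $\log_{2} \omega(n)$ term, and the slack between $c_0 C^{2}$ and $4C^{4}$ into the exponent (using $\omega(n) - \log_{2} \omega(n) \geq 2\omega(n)/3$ once $\omega(n)$ is moderately large), yields the stated bound $|A'| \leq |A|\cdot \omega(n)\cdot (7/8)^{\omega(n)/3 - 4C^{4}}$; for small $\omega(n)$ the right-hand side already exceeds $|A|$, so the inequality is vacuous there. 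The only substantial step is the local $7/8$-bound via Theorem \ref{keith}; everything else is combinatorial bookkeeping.
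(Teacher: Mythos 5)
Your proof is correct and follows essentially the same route as the paper: factor the pair-count $|A\cap(A-i)|$ prime by prime via CRT, apply Theorem \ref{keith} with $\varepsilon_1=\varepsilon_2=1$ to get a $7/8$ saving at every prime of $n$ that neither divides the shift nor is too small, use the trivial bound $(p-1)/C$ elsewhere, and sum over the $O(\omega(n))$ shifts -- the paper's only difference is in the bookkeeping, taking the threshold $4C^4$ and observing that at least $\omega(n)/3$ primes of $n$ exceed $\omega(n)\geqslant i$, which produces the stated exponent directly. Your specific numerical claim that $c_0=10$ suffices is marginally off at the boundary (e.g.\ $C=2$, $p=41$), but since you only need some absolute constant below $4C^2$ and your final step has ample slack, this does not affect the argument.
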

    \begin{proof}
    By the Chinese remainder theorem we have 
    \[
    |A|=\prod_{p|n}\frac{p-1}{C}.
    \] We take $A'$ as the union of sets $A\cap (A-i)$ for $1\leqslant i\leqslant \omega(n)$. This clearly satisfies the second of the desired conditions.

        We estimate the number of distinct residues modulo $p|n$ the set $A\cap (A-i)$ can take.
        If $p|i$ or $p<4C^4$, we use the trivial upper bound of $(p-1)/C$ residues.
        Else we use Theorem \ref{keith}.
        
        To use this fact let us fix a generator $g$ of the multiplicative group $\mathbb{F}_p^{*}$, and consider the character
        \[
        \chi\left(g^k\right)=e(k(p-1)/C).
        \]
        Applying this theorem we now get, that $A\cap (A-i)$ has at most
        \[
        \frac{p}{C^2}+\sqrt{p}+1<p\left(\frac{1}{C^2}+\frac{1}{2C^2}+\frac{1}{8C^4}\right)<\frac{7(p-1)}{8C}
        \]
        residues modulo $p$. 
        
        At least $\omega(n)/3$ primes dividing $n$ are larger than $\omega(n)>i$, so this case encompasses at least $\omega(n)/3-4C^4$ primes $p$. 
        
        Using the Chinese remainder theorem we can therefore bound
        \[
        |A\cap (A-i)|\leqslant (7/8)^{\omega(n)/3-4C^4}\prod_{p|n}\frac{p-1}{C},
        \]
        which gives the desired bound.
    \end{proof}

We now present the example which shows, that bounding $\omega(n)$ by a constant in Theorem \ref{refin}' is necessary, even if we assume an arbitrarily slow growth of the number of divisors to infinity, arbitrarily good rigidity rate and consider only squarefree numbers.
    
    \begin{thm}\label{counterstr}
        Take any functions $h_1, h_2:\mathbb{Z}^+\to\mathbb{R}^+$, the second one approaching infinity. Then there exists a totally uniquely ergodic dynamical system $(X, T)$ and a sequence $(q_n)$ of primes satisfying
       \[
       \sup_{x\in X}d(x, T^{q_n}(x))=o(h_1(q_n)),
       \]
       and a sequence $(r_n)$ of squarefree integers, each two coprime, satisfying $\omega(r_n)<h_2(r_n)$, such that
       \[
       \lim\limits_{n\to\infty}\frac{1}{r_n}\sum\limits_{i<r_n}\Pow_{r_n}(i)\cdot f\left(T^{i}(x)\right)
       \]
       doesn't exist for some function $f\in C(X)$ and point $x\in X$.
    \end{thm}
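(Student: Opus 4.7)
The plan is to adapt the construction from the preceding theorem's proof, replacing the sparse sequence $A_n = \{m^C : m^C > n/3^C\}$ with a sparse sequence built from $C$-th power residues modulo squarefree integers with many prime factors, sieved via Lemma~\ref{sparsing}. I take $B_2$ to be the primes, which is $C$-large by Dirichlet's theorem; the $q_{t_{2n+1}}$ produced by Lemma~\ref{counter} will be the primes $(q_n)$ appearing in the statement, and the function $h$ in Lemma~\ref{counter} can be chosen small enough that the rigidity bound $\sup_x \|S_{q_{t_{2n+1}}}(g)(x)\| = o(h_1(q_{t_{2n+1}}))$ holds. For $B_1$ I take the set of squarefree integers whose prime factors are all $\equiv 1 \pmod{C}$. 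Using Dirichlet's theorem this is $C$-large, and by multiplying several such primes in prescribed residue classes mod $a$ (possible by CRT) one can realize $r_n := q_{t_{2n}} \in B_1$ satisfying the required continued-fraction congruence while forcing $\omega(r_n) \to \infty$ at any chosen slow rate, in particular with $\omega(r_n) < h_2(r_n)$. The prime factors themselves can also be taken arbitrarily large, so $\varphi(r_n)/r_n \to 1$.

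The sparse sequence $(A_m)$ is defined by applying Lemma~\ref{sparsing} to each $r_n$: set $A_{r_n}$ to be the set of $C$-th power residues mod $r_n$ coprime to $r_n$, minus the bad subset $A'_{r_n}$ from that lemma, and $A_m = \emptyset$ otherwise. Consecutive elements of $A_{r_n}$ differ by at least $\omega(r_n) \to \infty$, so $(A_m)$ is sparse. Applying Lemma~\ref{counter} produces a totally uniquely ergodic skew product $(\mathbb{T}^2, T_{\alpha, g})$, with pairwise coprime $r_n = q_{t_{2n}}$ and primes $q_n = q_{t_{2n+1}}$, such that $S_k(g)(0)$ is within $\varepsilon$ of $0$ when $n$ is even and within $\varepsilon$ of $1/2$ when $n$ is odd, for every $k \in A_{r_n} \cap [r_n/3^C, r_n]$.

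For the counterexample, take $f(x,y) = \tilde f(y)$ for some continuous $\tilde f: \mathbb{T} \to [0,1]$ with $\tilde f(0) = 0$ and $\tilde f(1/2) = 1$, and the point $x = (0,0)$. Since $r_n$ is squarefree with every prime factor $\equiv 1 \pmod C$, one has $\Pow_{r_n}(i) = C^{\omega(r_n)}$ on $C$-th power residues coprime to $r_n$, and $\sum_{\gcd(i,r_n)=1}\Pow_{r_n}(i) = \varphi(r_n) = (1-o(1))r_n$. The contribution from $i$ with $\gcd(i, r_n) > 1$ is $O(r_n - \varphi(r_n)) = o(r_n)$, and the bad set contributes at most $C^{\omega(r_n)} |A'_{r_n}| \leq \varphi(r_n) \omega(r_n) (7/8)^{\omega(r_n)/3 - 4C^4} = o(r_n)$ by Lemma~\ref{sparsing}. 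A Burgess-type argument (expanding the indicator of $C$-th power residues in characters of $(\mathbb{Z}/r_n\mathbb{Z})^*$ and applying Theorem~\ref{burg} to each nontrivial character) shows that these residues are equidistributed in $[r_n/3^C, r_n]$, so the total $\Pow_{r_n}$-weight on this interval is $(1 - 1/3^C + o(1)) \varphi(r_n)$. Collecting everything, the weighted average $\frac{1}{r_n}\sum_{i<r_n}\Pow_{r_n}(i)\tilde f(S_i(g)(0))$ lies within $1/3^C + o(1)$ of $(1-1/3^C)\tilde f(0) = 0$ when $n$ is even and within $1/3^C + o(1)$ of $(1-1/3^C)\tilde f(1/2) = 1 - 1/3^C$ when $n$ is odd; these quantities are separated by at least $1 - 2/3^C > 0$ for $C \geq 2$, so the sequence of averages cannot converge.

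The main obstacle is ensuring the Burgess-type equidistribution holds uniformly as $\omega(r_n)$ grows: the natural density of $C$-th power residues among units is $C^{-\omega(r_n)}$, and summing Burgess bounds over all $C^{\omega(r_n)}$ cosets gives relative error $\sim C^{\omega(r_n)} r_n^{-1/2 + \varepsilon}$, which forces $\omega(r_n)$ to grow slowly enough in terms of $\log r_n$ (e.g.\ $\omega(r_n) \ll \log\log r_n$). Since $h_2 \to \infty$ this is compatible with the constraint $\omega(r_n) < h_2(r_n)$, so the construction can be carried out.
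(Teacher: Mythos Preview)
The overall strategy matches the paper's, but there is a genuine order-of-construction problem. Lemma~\ref{counter} takes the sets $B_1$, $B_2$ and the sparse sequence $(A_m)$ as \emph{input} and returns the numbers $r_n=q_{t_{2n}}$ as \emph{output}; you cannot choose the $r_n$ after the fact. You define $A_m$ only at $m=r_n$ (and empty otherwise), which is circular. If instead you mean to define $A_m$ for every $m$ in your $B_1$ (all squarefree integers with prime factors $\equiv 1\pmod C$) via Lemma~\ref{sparsing}, then $(A_m)$ is \emph{not} sparse: your $B_1$ contains arbitrarily large primes $p$, for which $\omega(p)=1$ and Lemma~\ref{sparsing} gives no nontrivial gap. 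For the same reason you cannot enforce $\omega(r_n)\to\infty$, $\omega(r_n)<h_2(r_n)$, or $\varphi(r_n)/r_n\to 1$ \emph{after} invoking Lemma~\ref{counter}, since that lemma only promises $r_n\in B_1$ and gives you no further say over which element of $B_1$ each $r_n$ is.

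The paper resolves this by shrinking $B_1$ to a hand-built countable set $\{b_n\}$: each $b_n$ is a product of exactly $n$ distinct primes, all $\equiv 1\pmod C$ and all exceeding a threshold depending on $n$ and $h_2$, with one factor placed in a prescribed residue class so that $B_1$ is still $C$-large. Then \emph{every} element of $B_1$ automatically satisfies the needed arithmetic constraints, $\omega(m)\to\infty$ along $B_1$, and the sequence $(A_m\setminus A'_m)_{m\in B_1}$ is genuinely sparse before Lemma~\ref{counter} is ever invoked. A secondary remark: for the equidistribution of $C$-th power residues in the relevant interval the paper does not use Burgess but simply applies Lemma~\ref{lem} with $q=N=r_n$, $M=r_n/3$, $r=1$; this additive-character estimate gives the count with no constraint on $\omega(r_n)$ whatsoever, so the worry in your final paragraph about errors of size $C^{\omega(r_n)}r_n^{-1/2+\varepsilon}$ disappears.
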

\begin{proof}
    We want to apply Lemma \ref{counter}, so we define the relevant objects. We let $B_2$ be the set of prime numbers.

    Let $(a_n, r_n)$ enumerate the set $\{(a, r)\in(\mathbb{Z}^+)^2: \gcd(a, rC)=1\}$. By Dirichlet's theorem we find $n-1$ primes in the progression $a_n\mathbb{Z}+1$ and one in $a_n\mathbb{Z}+r_n$, all distinct and greater than
    \[
    n^2+\min\{k\in\mathbb{Z}^+: h_2(k')>n \text{ for } k'\geqslant k\},
    \]
    and all congruent to $1$ modulo $C$, and we let $b_n$ be their product. Then $\omega(b_n)=n<h_2(b_n)$.

    We set $B_1=\{b_n: n\in\mathbb{N}\}$, which is $C$-large by construction. For $n\in B_1$ we let $A_n$ and $A'_n$ be as in Lemma \ref{sparsing}; for other $n$ we take $A_n=A'_n=\emptyset$. Then the sequence $(A_n\setminus A'_n)$ is sparse, so we can apply Lemma \ref{counter} to it.

    We then get the sequence $(q_{t_{2n+1}})$ which satisfies the rigidity condition. Let $r_n=q_{t_{2n}}$. Then, since $r_n\in B_1$, they are squarefree and satisfy $\omega(r_n)<h_2(r_n)$. 
    
   By the Chinese remainder theorem we have
   \[
   \Pow_{r_n}(a)=C^{\omega(r_n)}
   \]
   for each $a\in A_{r_n}$.
   We now use Lemma \ref{lem} for $M=r_n/3$ and $N=r_n$, obtaining  
   \[
   \sum\limits_{\substack{m\in [0, r_n/3]}}\abs{\{n\in [1, r_n]: n^C\equiv m\pmod{r_n}\}}=\left(1+o(1)\right)\frac{r_n\cdot r_n/3}{r_n\cdot 1},
   \]
   which implies
   
    \[
    |A_{r_n}\cap [0, r_n/3]|\leqslant \frac{1}{C^{\omega(r_n)}}\sum\limits_{m\leqslant r_n/3}\Pow_{r_n}(m)=\frac{r_n(1+o(1))}{3C^{\omega(r_n)}}=(1/3+o(1))|A_{r_n}|.
    \]
    Since $\omega(r_n)$ approaches infinity, by the choice of $(A_n)$ and $(A'_n)$ we have
    \[
  |A_{r_n}'|=o(|A_{r_n}|).
  \]
    By Bernoulli's inequality, we have 
    \[
   \frac{1}{r_n}\sum\limits_{m\leqslant r_n}\abs{\Pow_{r_n}(i)-\Pow_{r_n}(i, 1)}=1-\prod_{p|r_n}\left(1-\frac{1}{p}\right)\leqslant 1-\left(1-\frac{1}{\omega(r_n)^2}\right)^{\omega(r_n)}\leqslant \omega(r_n)\cdot \omega(r_n)^{-2}=o(1).
    \]

    We can finally let $f(x, y)=\tilde{f}(y)$ for any continuous function $\tilde{f}:\mathbb{T}\to[0, 1]$ satisfying $\tilde{f}(0)=0$ and $\tilde{f}(1/2)=1$. Then for even $n$ we have
    \begin{gather*}
    \frac{1}{r_n}\sum\limits_{i<r_n}\Pow_{r_n}(i)\cdot f\left(T^{i}(x)\right)=o(1)+\frac{1}{r_n}\sum\limits_{i<r_n}\Pow_{r_n}(i, 1)\cdot f\left(T^{i}(x)\right)=o(1)+\frac{1}{|A_{r_n}|}\sum\limits_{a\in A_{r_n}}f(T^{a}(x))\leqslant \\ \leqslant o(1)+\frac{1}{3}+\frac{1}{|A_{r_n}|}\sum\limits_{\substack{a\in A_{r_n}\setminus A'_{r_n}\\ a>r_n/3}}f(T^{a}(x))\leqslant o(1)+\frac{1}{3}.   
    \end{gather*}
    but similarly for odd $n$ the average is at least $2/3+o(1)$, so the sequence of averages cannot converge.
\end{proof}

\section{Acknowledgments}
I would like to thank Adam Kanigowski, whose continued guidance and support throughout this research were invaluable. I am also very grateful to Maksym Radziwiłł, for providing the proof of the case $C=2$ of Lemma \ref{lem} and helpful discussions surrounding it.

\end{document}